\def\O{\Box}
\def\P{\Diamond}
\def\L{\mathcal{L}}
\def\M{\mathcal{M}}
\def\to{\supset}
\def\To{\Longrightarrow}
\def\d{\Delta}
\def\g{\Gamma}
\def\de{\mathcal{D}}
\def\<{\langle}
\def\>{\rangle}
\def\gsd{\g\To\d}
\newcommand{\infrule}[1]{\scriptstyle\it{#1}}
\newcommand{\infruler}[1]{\scriptstyle{\textrm{#1}}}
\title{Sequent Calculi and Interpolation for Non-Normal Modal and Deontic Logics%
}
\author{
Eugenio Orlandelli%\inst{1}
}
\institute{
%University of Bologna,
  %Bologna, Italy\\
  %\email{eugenio.orlandelli@unibo.it}
  }
\authorrunning{Orlandelli}
\newtheorem{theorem}{Theorem}[section]  
\newtheorem{corollary}[theorem]{Corollary}
\newtheorem{lemma}[theorem]{Lemma}
\theoremstyle{definition}
\newtheorem{definition}[theorem]{Definition}
\newtheorem{proposition}[theorem]{Proposition}
\newtheorem{example}[theorem]{Example}
\titlerunning{Sequent calculi and interpolation for non-normal modal and deontic logics}
\begin{document}
\maketitle

\begin{abstract}   G3-style sequent calculi for the logics in the cube of non-normal modal logics and for their deontic extensions are studied. For each calculus we prove that weakening and contraction are height-preserving admissible, and we give a syntactic proof of the admissibility of cut. This implies that the subformula property holds  and that derivability can be decided by a terminating proof search whose complexity is in {\sc Pspace}. These calculi are shown to be equivalent to the axiomatic ones and, therefore,  they are sound and complete with respect to neighbourhood semantics.  Finally,  it is given a Maehara-style proof of Craig's interpolation theorem for most of the logics considered.\\
{\bf Keywords:} Non-normal logics, deontic logics, sequent calculi,   structural proof theory, interpolation,  decidability.
\end{abstract}
\section{Introduction}

For many interpretations  of the modal operators -- e.g., for deontic, epistemic,  game-theoretic, and high-probability interpretations -- it is necessary to adopt logics that are weaker than the normal ones; e.g., deontic paradoxes, see \cite{G13,M06},  are one of the main motivations for adopting a non-normal  deontic logic. Non-normal  logics, see \cite{C80} for naming conventions,  are quite well understood from a semantic point of view  by means of neighbourhood semantics  \cite{H09,P17}.
  Nevertheless, until recent years their proof theory has been rather limited  since it was mostly confined to Hilbert-style axiomatic systems. This situation seems to be rather unsatisfactory since it is difficult to find derivations in axiomatic systems. When the aim is to find derivations and to analyse their structural properties, sequent calculi are to be preferred to  axiomatic systems. Recently  different kinds of sequent calculi for non-normal logics have been proposed: Gentzen-style calculi  \cite{I05,I11,L00,O15}; labelled \cite{GM14,CO18} and display \cite{P19} calculi based on  translations into normal modal logics; labelled calculi based on the internalisation of neighbourhood  \cite{N17,NO19} and  bi-neighbourhood  \cite{DON} semantics;   and, finally, \mbox{linear nested sequents  \cite{L17}.}

 This paper, which extends the results  presented in \cite{O15},  concentrates on Gentzen-style calculi since they are better suited than labelled calculi, display calculi, and  nested sequents to give  decision procedures (computationally well-behaved) and constructive proofs of interpolation theorems. We consider cut- and contraction-free G3-style sequent calculi for all the logics in the cube of non-normal modalities and for their  extensions with the deontic axioms $D^\P:=\O A\to\Diamond A$ and $D^\bot:=\neg\O\bot$.  The calculi we present have the subformula property and  allow for a straightforward decision procedure by a terminating loop-free proof search. Moreover, with the  exception of the calculi for {\bf EC(N)} and its deontic extensions, they are \emph{standard} \cite{G16} -- i.e., each operator is handled by a finite number of rules with a finite number of premisses -- and they admit of a Maehara-style constructive proof of Craig's interpolation  theorem.
 
 This work improves on previous ones  on Gentzen-style calculi for non-normal logics  in that we  prove cut admissibility   for non-normal modal and deontic logics, and not only for the modal ones  \cite{L00,I05,I11}. Moreover, we prove height-preserving admissibility of weakening and contraction, whereas neither weakening nor contraction is admissible in \cite{L00,I05} and weakening but not contraction is admissible in \cite{I11}. The admissibility of contraction is a major improvement since, as it is well known, contraction can be as bad as cut for  proof search: we may continue to duplicate some formula forever  and, therefore,  we need a (computationally expensive) loop-checker to ensure termination.  Proof search  procedures based on contraction-free calculi terminate because the height of derivations is bounded by a number depending on the complexity of the end-sequent and, therefore, we avoid the need of  loop-checkers.  To illustrate, the introduction  of contraction-free calculi has allowed to give computationally optimal decision procedures for propositional intuitionistic logic  ({$\mathbf{IL_p}$) \cite{H93} and for the normal modal  logics {\bf K} and {\bf T} \cite{B97,H95}.   The existence of a loop-free terminating decision procedure has also allowed  to give a constructive proof of uniform interpolation for $\mathbf{IL_p}$ \cite{P92} as well as for  {\bf K} and {\bf T} \cite{B07}.  The cut- and contraction-free calculi for non-normal logics considered here are such that the height of each derivation is bounded by the weight of its end-sequent and, therefore, we easily obtain a polynomial space upper complexity bound for proof search. This upper bound  is  optimal for the  logics  having $C$  as theorem (the satisfiability problem for non-normal modal logics without $C$   is in  {\sc NP}, see \cite{V89}).
 
  Moreover,  the introduction of well-behaved calculi for non-normal deontic logics is interesting since  proof analysis can be applied to  the deontic paradoxes \cite{M06} that are one of the central topics of deontic reasoning. We illustrate this in Section  \ref{forrester} by considering Forrester's Paradox \cite{F84} and by showing that proof analysis cast  doubts on the widespread opinion \cite{M06,P17,T97} that Forrester's argument provides evidence against rule $RM$ (see Table \ref{rulesinf}).
If   Forrester's argument  is formalized as in \cite{M06} then  it does not compel us to adopt a deontic logic weaker than {\bf KD}. If, instead, it is formalised as in \cite{T97} then it forces the adoption of a logic where $RM$ fails, but the formal derivation  differs substantially from Forrester's informal argument. 
 
 It is also given a constructive proof of interpolation for all logics having a standard calculus. To our knowledge in the literature there is no other constructive  study of interpolation in non-normal  logics.  In \cite[Chap(s). 3.8 and 6.6]{F83} a constructive proof of Craig's (and Lyndon's)  interpolation theorem is given for the modal logics {\bf K} and {\bf R}, and for some of their extensions, including the deontic ones, but the proof makes use of model-theoretic notions. A proof of interpolation by the  Maehara-technique for {\bf KD} is given in \cite{V93}. For a thorough study of  interpolation in modal logics we refer the reader to \cite{G05}. A model-theoretic proof of interpolation for {\bf E} is given in \cite{H09}, and a coalgebraic proof  of (uniform) interpolation for all the logics considered here, as well as all other rank-1 modal logics (see below), is given in \cite{P13}. As it is explained in Example \ref{prob}, we have not been able to prove interpolation for calculi containing the non-standard rule $LR$-$C$ (see Table \ref{Modal rules}) and, as far as we know, it is still an open problem whether it is possible to give a constructive proof of interpolation for these logics.
 
 \paragraph{Related Work.}\label{related}
 The modal rules of inference  presented in Table \ref{Modal rules} are obtained from the rules presented in \cite{L00} by adding weakening contexts to the conclusion of the rules. This minor modification, used also in \cite{I11,P13,PS10} for several modal rules, allows us to shift from set-based sequents to multiset-based ones  and to prove not only that cut is admissible, as it is done in \cite{I05,I11,L00}, but also that weakening and contraction are height-preserving admissible. Given that implicit contraction is not eliminable from set-based sequents, the decision procedure for non-normal logics given in \cite{L00} is based on  a model-theoretic inversion technique so that it is possible to define a  procedure that  outputs a derivation for all valid sequents and a finite   countermodel  for all invalid ones. One weakness of this decision  procedure is that  it does not respect the subformula property for logics without rule $RM$ (the procedure  adds instances of the excluded middle).

The paper \cite{I05} considers multiset-based calculi for the non-normal logic  {\bf M(N)} and for its extensions with axioms $D^\P,T, 4, 5$, and $B$. Nevertheless,  neither weakening nor contraction is eliminable  because there are no weakening contexts in the conclusion of the modal rules. In \cite{I11} multiset-based sequent calculi for the non-normal  logic {\bf E(N)} and  for its extensions with  axioms $D^\P,T$, 4, 5, and $B$ are given. The rules $LR$-$E$ and $R$-$N$ are  as in Table \ref{Modal rules}, but the deontic axiom  $D^\P$ is expressed  by the following  rule:

$$
\infer[\infrule D\text{-}2]{\O A,\O B,\g\To\d}{A,B\To&(\To A,B)}
$$
where the right premiss  is present when we are working over $LR$-$E$ and  it has to be omitted when we work over $LR$-$M$. In the calculi in \cite{I05,I11} weakening and contraction are taken as primitive rules and not as admissible ones as in the present approach. Even if it is easy to show that weakening is eliminable from the calculi in \cite{I11}, contraction cannot be eliminated because rule \emph{D-2} has exactly two principal formulas and, therefore, it is not possible to permute contraction up with respect to instances of  rule \emph{D-2} (see Theorem \ref{contr}). 
The presence of a non-eliminable rule of contraction makes the elimination of cut more problematic: in most cases we cannot  eliminate the cut directly, but we have to consider the rule known as multicut \cite[p. 88]{NP01}.  Moreover, cut is not eliminable from the calculus given in \cite{I11} for the deontic  logic {\bf END}. The formula  $D^\bot:= \neg\O\bot$ is a theorem of this logic, but it can be derived only with a non-eliminable instance of  cut  as in:

$$
\infer[\infrule R\neg]{\To \neg\O\bot}{
\infer[\infrule Cut]{\O\bot\To}{
\infer[\infrule R\mbox{-}N]{\To\O\top}{\To\top}&
\infer[\infrule D\mbox{-}2]{\O\top,\O\bot\To}{\bot,\top\To&\To\bot,\top}}}
$$ 

Finally, it is worth noticing that all the non-normal logics we consider here are  \emph{rank-1} logics in the sense of \cite{P13,PS10,PS09} -- i.e., logics whose modal axioms are propositional combinations of formulas of shape $\Box\phi$, where  $\phi$ is purely propositional -- and the calculi we give for the  modal logics {\bf E}, {\bf M}, {\bf K} and {\bf KD} are explicitly considered in \cite{P13,PS09}.  Thus, they are part of the family of modal coalgebraic logics  \cite{P13,PS10,PS09} and most of the results in this paper can be seen as  instances of general results that hold for rank-1 (coalgebraic) logics. If, in particular, we consider cut-elimination for coalgebraic logics \cite{PS10} then all our calculi  absorb congruence  and Theorem \ref{contr} and case 3 of Theorem \ref{cut} show that they absorb  contraction and cut. Hence, \cite[Thm. 5.7]{PS10} entails  that cut and contraction are   admissible in these calculi; moreover, \cite[Props. 5.8 and 5.11]{PS10} entail that they are one-step cut free complete w.r.t. coalgebraic semantics. This latter result gives a semantic proof of cut admissibility  in the calculi considered here. Analogously, if we consider decidability, the polynomial space upper bound we find in Section \ref{decision} coincides with that found in \cite{PS09} for rank-1 modal logics.

\paragraph{Synopsis. }
Section \ref{secaxiom} summarizes the basic notions of axiomatic systems and of neighbourhood semantics for  non-normal  logics.  Section \ref{seccalculi} presents G3-style  sequent calculi for these logics and then shows that weakening and contraction are height-preserving admissible and that cut is (syntactically) admissible.  Section \ref{secdecax} describes a  terminating proof-search decision procedure for all calculi, it shows that each calculus is equivalent to the  corresponding axiomatic system, and  it applies proof search to Forrester's paradox.  Finally, Section \ref{secinterpol} gives a Maehara-style    constructive proof of Craig's interpolation theorem for the logics having a standard calculus.
\section{Non-normal Logics}\label{secaxiom}
\subsection{Axiomatic Systems}
We introduce, following \cite{C80},  the basic notions  of non-normal  logics. Given a countable set of propositional variables $\{p_n\,|\,n\in \mathbb{N}\}$, the formulas of the modal language $\L$  are generated by:

$$
A::= \;p_n\;|\;\bot\;|\;A\wedge A\;|\;A\lor A\;|\;A\to A\;|\;\O A
$$
We remark that $\bot$ is a 0-ary logical symbol. This will be extremely important in the proof of Craig's interpolation theorem. As usual $\neg A$ is a shorthand for $A\to\bot$, $\top$ for $\bot\to\bot$, $A \leftrightarrow B$  for $(A\to B)\wedge(B\to A)$, and $\P A$ for $\neg\O\neg A$. We follow the usual conventions for parentheses.

\begin{table}
\caption{ Rules of inference}\label{rulesinf}
\begin{center}
\begin{tabular}{ccc}
\hline\hline\noalign{\smallskip}
\infer[\infrule RE]{\O A\leftrightarrow\O B}{A\leftrightarrow B}
&$\quad$&
\infer[\infrule RM]{\O A\to\O B}{A\to B}
\\\noalign{\smallskip\smallskip\smallskip}
\infer[\infrule RR,\; n\geq 1]{(\O A_1\wedge\dots\wedge\O A_n)\to\O B}{(A_1\wedge\dots\wedge A_n)\to B}
&$\quad$&
\infer[\infrule RK,\; n\geq 0]{(\O A_1\wedge\dots\wedge\O A_n)\to\O B}{(A_1\wedge\dots\wedge A_n)\to B}
\\
\noalign{\smallskip}\hline\hline
\end{tabular}
\end{center}
\end{table}

\begin{table}
\caption{ Axioms}\label{axioms}
\begin{center}
\begin{tabular}{cclccclcccl}
\hline\hline\noalign{\smallskip}
$M$)&$\quad$&$\O (A\wedge B)\to(\O A\wedge\O B)$&${}$\qquad{}\qquad{}\qquad&$C$)&$\quad$&$(\O A\wedge\O B)\to\O(A\wedge B)$ \\\noalign{\smallskip\smallskip\smallskip}$N$)&$\quad$&$\O \top$&${}$\qquad{}\qquad{}\qquad&
$D^\bot$)&$\quad$&$\neg\O\bot$ \\\noalign{\smallskip\smallskip\smallskip}$D^\P$)&$\quad$&$ \O A\to\P A$
\\
\noalign{\smallskip}\hline\hline
\end{tabular}
\end{center}
\end{table}

Let {\bf L}  be the logic containing  all $\L$-instances of propositional tautologies as axioms, and   modus ponens ($MP$) as inference rule.  The minimal non-normal modal logic {\bf E} is the logic {\bf L} plus the rule $RE$ of Table \ref{rulesinf}. We will consider all the logics that are obtained by extending {\bf E} with some set of axioms from Table \ref{axioms}. We will denote the logics according to the axioms that define them, e.g. {\bf EC} is the logic  {\bf E}$\,\oplus \,C$, and {\bf EMD$^\bot$} is {\bf E}$\,\oplus\, M\oplus D^\bot$.  By {\bf X} we denote any of these logics and we  write \mbox{{\bf X} $\vdash A$} whenever $A$ is a theorem of {\bf X}. We will call \emph{modal} the logics containing neither $D^\bot$ nor $D^\P$, and \emph{deontic} those containing at least one of them. We have followed the usual naming conventions for the modal axioms, but we have introduced new  conventions for the deontic ones: $D^\bot$ is usually called either  $CON$ or $P$ and $D^\P$ is usually called $D$, cf. \cite{G00,G13,M06}.
 
 It is also possible to give an equivalent  rule-based axiomatization of some of these logics. In particular, the logic {\bf EM}, also called {\bf M}, can be axiomatixed as {\bf L} plus the rule $RM$ of Table \ref{rulesinf}.  The logic  {\bf EMC}, also called {\bf R}, can be axiomatized as {\bf L} plus the rule  $RR$ of Table \ref{rulesinf}. Finally,  the logic {\bf EMCN}, i.e. the smallest normal modal logic {\bf K},  can be axiomatized as {\bf L} plus the rule $RK$ of Table \ref{rulesinf}. These rule-based axiomatizations will be useful later on since they simplify the proof of the equivalence between  axiomatic systems and  sequent calculi (Theorem \ref{comp}).

The following proposition states the well-known relations between the theorems of non-normal modal  logics. For a proof  the reader is referred to \cite{C80}.

\begin{proposition} For any formula $A\in \L$ we have that {\bf E} $\vdash A$ implies {\bf M}  $\vdash A$; {\bf M} $\vdash A$   implies {\bf R} $\vdash A$; {\bf R} $\vdash A$  implies {\bf K} $\vdash A$. Analogously for the logics containing axiom $N$ and/or axiom $C$.\end{proposition}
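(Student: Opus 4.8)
The plan is to read the statement as a monotonicity property of Hilbert-style systems: enlarging the stock of axioms while keeping the inference rules fixed can only enlarge the set of theorems. Using the axiomatic presentations of Table~\ref{axioms}, we have $\mathbf{M}=\mathbf{E}\oplus M$, $\mathbf{R}=\mathbf{M}\oplus C$ and $\mathbf{K}=\mathbf{R}\oplus N$, so each logic in the chain is obtained from its predecessor by adjoining one further modal axiom. Since every axiom and every rule (namely $MP$ and $RE$) available in the smaller logic is still available in the larger one, any derivation witnessing $\mathbf{E}\vdash A$ is, read verbatim, already a derivation of $\mathbf{M}\vdash A$; iterating gives $\mathbf{E}\vdash A\Rightarrow\mathbf{M}\vdash A\Rightarrow\mathbf{R}\vdash A\Rightarrow\mathbf{K}\vdash A$.

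If one prefers the rule-based axiomatizations of $\mathbf{M},\mathbf{R},\mathbf{K}$ (namely $\mathbf{M}=\mathbf{L}\oplus RM$, $\mathbf{R}=\mathbf{L}\oplus RR$, $\mathbf{K}=\mathbf{L}\oplus RK$), the inclusions are no longer literally immediate, and the only step requiring an argument is that the modal rule of the smaller calculus is a derived rule of the larger one. For $\mathbf{E}\subseteq\mathbf{M}$ one simulates $RE$ by $RM$: from a derivation of $A\leftrightarrow B$ propositional reasoning yields both $A\to B$ and $B\to A$; two applications of $RM$ give $\O A\to\O B$ and $\O B\to\O A$, and conjoining them (again propositionally) produces $\O A\leftrightarrow\O B$. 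For $\mathbf{M}\subseteq\mathbf{R}$ and $\mathbf{R}\subseteq\mathbf{K}$ nothing needs deriving: $RM$ is exactly the instance $n=1$ of $RR$, and each instance of $RR$ (which requires $n\geq 1$) is an instance of $RK$ (which allows $n\geq 0$).

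The clause ``analogously for the logics containing axiom $N$ and/or axiom $C$'' is covered by the very same monotonicity observation, since adjoining $N$ and/or $C$ uniformly to the logics in the chain preserves the inclusion of the underlying axiom (and rule) sets; concretely, every edge of the cube that adds an axiom yields the corresponding inclusion of theorems. I expect no genuine obstacle here: the result is purely syntactic and reduces to set inclusion of axiom sets, the only point asking for (minimal) care being the recovery of $RE$ from $RM$ in the first inclusion when the rule-based presentation is used.
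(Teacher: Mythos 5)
Your proof is correct. There is no in-paper argument to compare it against: the paper labels these inclusions ``well-known'' and defers entirely to Chellas \cite{C80}, giving no proof of its own. Under the paper's official definitions your first paragraph already suffices: $\mathbf{M}=\mathbf{E}\oplus M$, $\mathbf{R}=\mathbf{E}\oplus M\oplus C$ and $\mathbf{K}=\mathbf{E}\oplus M\oplus C\oplus N$ all retain the rules $MP$ and $RE$ and differ only by nested axiom sets, so derivations transfer verbatim, and the same monotonicity works along every edge of the cube, which disposes of the ``analogously'' clause. Your second paragraph on the rule-based presentations ($RE$ simulated by $RM$ plus propositional steps; $RM$ being the $n=1$ case of $RR$; every $RR$ instance being an $RK$ instance) is also correct, though strictly superfluous for the statement as given; its side benefit is that it establishes part of the equivalence between the axiom-based and rule-based axiomatizations of $\mathbf{M}$, $\mathbf{R}$, $\mathbf{K}$, which the paper likewise asserts without proof. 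The only genuinely non-trivial facts behind the citation to \cite{C80} are the properness of these inclusions, which the proposition does not claim and which your argument correctly does not attempt.
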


Axiom $D^\bot$ is {\bf K}-equivalent to $D^\P$, but the correctness of $D^\P$ has been a big issue in the literature on deontic logic. This fact urges the study of  logics weaker than {\bf KD}, where $D^\bot$ and $D^\P$ are no more equivalent \cite{C80}. 
The deontic  formulas $D^\bot$ and $D^\P$ have the following relations in the logics we are considering.
\begin{proposition} $D^\bot$ and $D^\P$ are  independent  in {\bf E}; $D^\bot$ is derivable from $D^\P$ in  non-normal logics containing at least one of the axioms  $M$  and $N$; $D^\P$ is derivable from $D^\bot$ in non-normal logics containing  axiom $C$.
\end{proposition}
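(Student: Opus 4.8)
The plan is to treat the three assertions separately: the two derivability claims are established by short syntactic derivations using only propositional reasoning, the congruence rule $RE$, and the relevant axioms, whereas the independence claim is established semantically by exhibiting two small neighbourhood countermodels. Throughout I use that, over {\bf E}, the rule $RE$ lets me replace $\O C$ by $\O C'$ whenever $\vdash C\leftrightarrow C'$, together with the propositional equivalences $\neg\top\leftrightarrow\bot$, $\neg\bot\leftrightarrow\top$, and $A\wedge\neg A\leftrightarrow\bot$.

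For the derivation of $D^\P$ from $D^\bot$ under axiom $C$, I would instantiate $C$ with $B:=\neg A$, obtaining $(\O A\wedge\O\neg A)\to\O(A\wedge\neg A)$; since $A\wedge\neg A\leftrightarrow\bot$, the rule $RE$ turns this into $(\O A\wedge\O\neg A)\to\O\bot$, which combined with $D^\bot=\neg\O\bot$ yields $\neg(\O A\wedge\O\neg A)$, i.e.\ $\O A\to\neg\O\neg A=D^\P$. For the derivation of $D^\bot$ from $D^\P$ I split on the available axiom. Under $N$ I instantiate $D^\P$ with $A:=\top$ to get $\O\top\to\neg\O\neg\top$; discharging the antecedent with axiom $N$ (i.e.\ $\O\top$) by $MP$ gives $\neg\O\neg\top$, and $RE$ applied to $\neg\top\leftrightarrow\bot$ rewrites this as $\neg\O\bot=D^\bot$. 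Under $M$ I first recall that $M$ yields the rule $RM$ over {\bf E} (from $\vdash A\to B$ one gets $\vdash A\leftrightarrow A\wedge B$, hence $\vdash\O A\leftrightarrow\O(A\wedge B)$ by $RE$, and then $\O A\to\O B$ by $M$); applying $RM$ to the tautology $\bot\to\top$ gives $\O\bot\to\O\top$, while $D^\P$ with $A:=\bot$ together with $RE$ (using $\neg\bot\leftrightarrow\top$) gives $\O\bot\to\neg\O\top$, and these two implications are propositionally incompatible unless $\neg\O\bot$ holds, which yields $D^\bot$.

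For independence in {\bf E} I would use the neighbourhood semantics, recalling that $w\models\O A$ iff $\|A\|\in N(w)$, so that $D^\bot$ corresponds to the condition $\emptyset\notin N(w)$ and $D^\P$ to the condition that no set and its complement both belong to $N(w)$; since every neighbourhood model validates {\bf E}, any such model yields a non-derivability witness. To show that $D^\bot$ does not entail $D^\P$ I would take $W=\{w,v\}$ with $N(w)=N(v)=\{\{w\},\{v\}\}$ and a variable $p$ with $\|p\|=\{w\}$: here $\emptyset\notin N(\cdot)$, so $D^\bot$ holds, whereas $\|p\|=\{w\}$ and its complement $\{v\}=\|\neg p\|$ are both neighbourhoods of $w$, so $\O p\to\neg\O\neg p$ fails and hence $D^\P$ fails. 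To show that $D^\P$ does not entail $D^\bot$ I would take the one-point model $W=\{w\}$ with $N(w)=\{\emptyset\}$: here $\emptyset\in N(w)$, so $D^\bot$ fails, while the only neighbourhood is $\emptyset$ and its complement $W$ is absent, so no instance $\O A\to\neg\O\neg A$ is refuted and $D^\P$ holds.

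I do not expect a genuine obstacle: the derivations are routine once the $RE$-rewrites are lined up, and the independence proofs reduce to checking the three validity conditions on two- and one-element frames. The only points deserving care are the recovery of $RM$ from $M$ over {\bf E}, so that the $M$-case does not tacitly presuppose the rule-based formulation, and the bookkeeping of the propositional equivalences under $RE$; both are standard. Should one prefer to avoid invoking the neighbourhood semantics at this point, the same two countermodels can be repackaged as finite modal algebras, but the neighbourhood presentation is the most transparent.
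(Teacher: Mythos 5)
Your proposal is correct in all three parts: the two syntactic derivations (the $C$-case via $RE$ on $A\wedge\neg A\leftrightarrow\bot$, the $N$-case via $MP$ with $\O\top$ and $RE$ on $\neg\top\leftrightarrow\bot$, and the $M$-case via first recovering $RM$ from axiom $M$ over {\bf E}) all go through, and both neighbourhood countermodels check out — in the first, $\emptyset\notin N(\cdot)$ validates $D^\bot$ while $\{w\},\{v\}\in N(w)$ refutes $\O p\to\neg\O\neg p$; in the second, $N(w)=\{\emptyset\}$ refutes $\neg\O\bot$ while validating every instance of $\O A\to\neg\O\neg A$, since $\O A$ forces $\|A\|=\emptyset$ and hence $\|\neg A\|=W\notin N(w)$. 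The paper itself gives no proof of this proposition, deferring (as with the surrounding results) to Chellas \cite{C80}, so there is no in-paper argument to compare against; your route is the standard one and fits the paper's own framework exactly, since your two models are just witnesses that non-blindness and complement-freeness — the correspondence conditions the paper lists for $D^\bot$ and $D^\P$ in Proposition \ref{corrax} — are independent properties of the neighbourhood function. The one point you rightly flagged, recovering $RM$ from $M$ over {\bf E} rather than presupposing the rule-based axiomatization, is handled correctly, and your appeal to soundness is legitimate because validity in a fixed neighbourhood model is closed under $MP$ and $RE$ (if $A\leftrightarrow B$ holds at all worlds then $\|A\|=\|B\|$).
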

In Figure  \ref{cube} the reader finds the lattice of non-normal modal logics, see \cite[p. 237]{C80}, and in Figure \ref{cubed}  the lattice of non-normal deontic logics. 
%\begin{minipage}[b]{0.45\linewidth}
 \begin{figure}[t]%\label{cube}
\begin{center}
\begin{tikzpicture}
\node at (1,-2)  {EM={\bf M}};
\node at (4,-4)  {{\bf E}};
\node at (4,-2)  {{\bf EC}};
\node at (7,-2)  {{\bf EN}};
\node at (1,0)  {EMC={\bf R}};
\node at (4,2)  {EMCN={\bf K}};
\node at (4,0)  {{\bf EMN}};
\node at (7,0)  {{\bf ECN}};

\draw (3.8,-3.8) -- (1,-2.2);
\draw (4,-3.8) -- (4,-2.2); 
\draw (4.2,-3.8) -- (7,-2.2);

\draw (1,-1.8) -- (1,-0.2);
\draw (4,0.2) -- (4,1.8); 
\draw (7,-1.8) -- (7,-0.2);

\draw (1.2,0.2) -- (3.8,1.8);
\draw (6.8,0.2) -- (4.2,1.8);
\draw (1.2,-1.8) -- (3.8,-0.2);
\draw (6.8,-1.8) -- (4.2,-0.2);

\draw (3.8,-1.8) -- (1.2,-0.2);
\draw (4.2,-1.8) -- (6.8,-0.2);
\end{tikzpicture}
\caption{Lattice of non-normal modal logics}\label{cube}
\end{center}
\end{figure}

 \begin{figure}[t]
\begin{center}
\scalebox{0.80000}{\begin{tikzpicture}
\filldraw  [black] (5,-4.2) circle (4pt)  %ED^\bot 
		     (6.7,-4.2) circle (4pt) %ED^\P
		     (5,-3) circle (4pt); %ED^\bot D^\P
\node[below] at (4.8,-4.3) {\small{\bf ED$^\bot$}};
\node[below] at (6.7,-4.3)  {\small{\bf ED$^\P$}};
\node[left] at (4.9,-3) {\small{ ED$^\bot$D$^\P$}=};
\node[left] at (4.9,-3.3) {\small{\bf ED}};
\draw (5,-4.2) -- (1,-1.7);%ok ED^\bot-MD^\bot
\draw (5,-3) -- (1,-0.5);%ok ED^\botD^\P-MD\P
\draw (5,-4.2) -- (5,-3);%ok ED^\bot-ED^\bot\P
\draw (6.7,-4.2) -- (5,-3);%ok ED^\P-ED^\bot\P
\draw  (5,-3) -- (5,-0.5);%ok ED^\botD^\P-ECD^\bot
\draw (6.7,-4.2) -- (6.7,-1.6);%ok ED^\P\P-ECD^\P
\draw (5,-4.2) -- (9,-1.7);%ok ED^\bot-END^\bot
%\draw (6.6,-4) -- (10.6,-1.5);%ok ED^\P-END^\P
\draw (5,-3) -- (9,-0.5);%ok ED^\bot\P-END^\bot\P

%%%%arrivato fino a qui!!!!

\filldraw  [black] (6.7,-1.5) circle (4pt)  %ECD^\P 
		     (5,-0.5) circle (4pt); %ECD^\bot
\node[left] at (6.5,-1.5)  {\small{\bf ECD$^\P$}};
\node[left] at (4.9,-0.5)  {\small{ ECD$^\bot$=}};
\node[left] at (4.9,-0.8)  {\small{ \bf ECD}};
\draw (5,-0.5) -- (9,2);%ok ECD^\bot-ECND^\bot
%\draw (6.6,-1.5) -- (10.6,1);%okECD^\P-ECND^\P
\draw (6.7,-1.5)-- (5,-0.5);%ok ECD^\P-ECD^\bot
\draw (5,-0.5) -- (1,2);%ok ECD^\bot-RD^\bot

\filldraw  [black] (9,-1.7) circle (4pt)  %END^\bot .6
		    % (10.6,-1.5) circle (4pt) %END^\P
		     (9,-0.5) circle (4pt); %END^\bot D^\P
\node[right] at (9.2,-1.7)  {\small{\bf END$^\bot$}};
%\node[right] at (10.7,-1.5)  {\small{\bf END$^\P$}};
\node[right] at (9.1,-0.5)  {\small{ END$^\P$}= {\bf END}};
\draw (9,-0.5) -- (9,2);%ok END^\bot\P-ECND^\bot\P
%\draw (10.6,-1.5) -- (10.6,1);%ok END^\P-ECND^\P
\draw (9,-1.7) -- (9,-0.5);%ok END^\bot-END^\bot\P
%\draw (10.6,-1.5) -- (9,-0.5);%ok END^\P-END^\bot\P
\draw (9,-1.7) -- (5,0.8);%ok END^\bot-EMND^\bot
\draw (9,-0.5) -- (5,2);%ok END^\bot\P-EMND^\P

\filldraw  [black] (1,2) circle (4pt)  ;%RD^\bot=RD^\P 
\node[left] at (0.9,2)  {\small{{ RD$^\bot$}= {RD$^\P$}= {\bf RD}}};
\draw (1,2) -- (5,4.5);

\filldraw  [black] (5,4.5) circle (4pt)  ;%KD^\bot=KD^\P 
\node[above] at (5,4.6)  {\small{{KD$^\bot$}= {KD$^\P$}= {\bf KD}}};

\filldraw  [black] (1,-1.7) circle (4pt)  %MD^\bot 
		     (1,-0.5) circle (4pt); %MD^\P
\node[left] at (0.9,-1.7)  {\small{\bf MD$^\bot$}};
\node[left] at (0.9,-0.5) {\small{ MD$^\P$}= {\bf MD}};
\draw (1,-1.7) -- (5,0.8);%ok MD^\bot-MND^\bot
\draw (1,-0.5) -- (5,2);%ok MD^\P-MND^\P
\draw (1,-1.7) -- (1,-0.5);%ok MD^\bot-MD^\P
		     (5,2) circle (4pt); %EMND^\P
\draw (1,-0.5) -- (1,2);%ok MD^\P-RD

\filldraw  [black] (5,0.8) circle (4pt)  %EMND^\bot 
		    (5,2) circle (4pt);  %EMND^\P
\node[right] at (5.2,0.8)  {\small{\bf MND$^\bot$}};
\node[right] at (5.1,2)  {\small{ MND$^\P$}= {\bf MND}};
\draw (5,0.8) -- (5,2); %MND^\bot-MND^\P
\draw (5,2) -- (5,4.5); %MND^\P-KD

\filldraw  [black] %(10.6,1) circle (4pt)  %ECND^\P 
		     (9,2) circle (4pt); %ECND^\bot
%\node[right] at (10.7,1)  {\small{\bf ECND$^\P$}};
\node[right] at (9.1,2)  {\small{ECND$^\bot$} = {ECND$^\P$}= {\bf ECND}};
%\draw (10.6,1) -- (9,2);%ok CND^\P-CND^\bot
\draw (9,2) -- (5,4.5);%ok
\end{tikzpicture}}
\caption{Lattice of non-normal deontic logics}\label{cubed}
\end{center}
\end{figure}%\end{minipage}

\subsection{Semantics}\label{semantics}
The most widely known semantics for non-normal logics is neighbourhood semantics. We sketch its main tenets following \cite{C80}, where neighbourhood models are called \emph{minimal models}. 

\begin{definition}A \emph{neighbourhood model} is a triple $\M:=\langle W,\, N,\,P\rangle$, where $W$ is a non-empty set of possible worlds; $N:W\longrightarrow 2^{2^W}$ is a neighbourhood function that associates to each possible world $w$ a set $N(w)$ of subsets   of $W$; and $P$ gives a truth value to each propositional variable at each world. \end{definition}

\noindent The definition of truth of a formula $A$ at a world $w$ of a neighbourhood model $\mathcal{M}$ -- $\models_w^\mathcal{M}A$ -- is the standard one for the classical connectives with the addition of

$$
\models_w^\mathcal{M}\O A\qquad \textnormal{iff}\qquad || A||^\mathcal{M}\in N(w)
$$
where $||A||^\mathcal{M}$ is the truth set of $A$ -- i.e., $||A||^\mathcal{M}=\{w\,|\,\models_w^\M A\}$. We say that a formula $A$  is \emph{valid} in a class $\mathcal{C}$ of neighbourhood models iff it is true in every world of every $\mathcal{M}\in\mathcal{C}$.

In order to give  soundness and completeness results for non-normal modal and deontic  logics with respect to (classes of) neighbourhood models, we introduce the following definition.
\begin{definition} Let $\mathcal{M}=\langle W,\, N,\,P\rangle$ be a neighbourhood model, $X,Y \in 2^W$, and $w\in W$, we say that: 
\begin{itemize}
\item $\M$ is \emph{supplemented}    if $X\cap Y\in N(w)$ imples $X\in N(w)$ and $Y\in N(w)$;
\item$\M$ is \emph{closed under finite intersection}  if $X\in N(w)$ and $Y\in N(w)$ imply $X\cap Y\in N(w)$;
\item $\M$ \emph{contains the unit} if   $W\in N(w)$;
\item $\M$ is \emph{non-blind}  if $X\in N(w)$ implies $X\neq \emptyset$; %{\bf is this correct?}
\item $\M$ is \emph{complement-free} if $X\in N(w)$ implies $W-X\not\in N(w)$.            
\end{itemize}\end{definition}

%We also introduce the following  results from correspondence theory.

\begin{proposition}\label{corrax} We have the following correspondence results between $\L$-formulas and the properties of the neighbourhood function defined above:
\begin{itemize}
\item Axiom $M$ corresponds to supplementation;
\item Axiom $C$ corresponds to closure under finite intersection;
\item Axiom $N$ corresponds to containment of the unit;
\item Axiom $D^\bot$ corresponds to non-blindness;
\item Axiom $D^\P$ corresponds to complement-freeness.
\end{itemize}
\end{proposition}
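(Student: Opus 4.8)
The plan is to read each item as a frame-level biconditional: for a neighbourhood frame $\langle W,N\rangle$ and a world $w$, the relevant axiom is true at $w$ under \emph{every} valuation $P$ if and only if $N(w)$ enjoys the stated property (note that each property mentions only $N$ and $W$, so it is genuinely a condition on the frame). Quantifying the local biconditional over all $w$ then yields the correspondence between frame-validity of the axiom and the frame satisfying the property. The whole argument rests on the truth clause $\models_w^\M\O A$ iff $||A||^\M\in N(w)$, together with the elementary identities $||A\wedge B||^\M=||A||^\M\cap||B||^\M$, $||\neg A||^\M=W-||A||^\M$, $||\bot||^\M=\emptyset$, and $||\top||^\M=W$, all of which follow directly from the truth definition (and, for the last two, from $\bot$ being a genuine $0$-ary symbol, so that these two sets do not depend on $P$).

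First I would establish the direction from the property to validity of the axiom, which is the one needed for soundness. For $M$, if $N(w)$ is supplemented and $||A\wedge B||^\M=||A||^\M\cap||B||^\M\in N(w)$, then both $||A||^\M,||B||^\M\in N(w)$, so $\models_w^\M\O A\wedge\O B$; the cases of $C$ (closure under intersection) and $N$ (containment of the unit, using $||\top||^\M=W$) are symmetric. For $D^\bot$, non-blindness is exactly $\emptyset\notin N(w)$, whence $||\bot||^\M=\emptyset\notin N(w)$ and $\models_w^\M\neg\O\bot$. For $D^\P$, unfolding $\P A$ as $\neg\O\neg A$ and using $||\neg A||^\M=W-||A||^\M$, complement-freeness gives that $||A||^\M\in N(w)$ implies $W-||A||^\M\notin N(w)$, i.e.\ $\models_w^\M\O A$ implies $\not\models_w^\M\O\neg A$, which is $\models_w^\M\O A\to\P A$.

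For the converse I would exploit that the valuation may send any propositional variable to any subset of $W$: given $X,Y\subseteq W$ and fresh variables $p,q$, choose $P$ so that $||p||^\M=X$ and $||q||^\M=Y$. Then validity of $M$ applied to $\O(p\wedge q)$ forces supplementation, validity of $C$ applied to $\O p\wedge\O q$ forces closure under intersection, and validity of $D^\P$ applied to $\O p$ forces complement-freeness; for $N$ and $D^\bot$ the formulas $\O\top$ and $\neg\O\bot$ are closed, so their validity directly yields $W\in N(w)$ and $\emptyset\notin N(w)$ with no choice of valuation needed. None of the steps presents a real obstacle: the only points deserving care are the bookkeeping identity $||\neg A||^\M=W-||A||^\M$, which is what turns $D^\P$ into a statement about complements, and the observation that the clauses for $\bot$ and $\top$ are valuation-independent, which is precisely why $N$ and $D^\bot$ correspond to properties that quantify over no formula parameter. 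The remaining case analysis is entirely routine.
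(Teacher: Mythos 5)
Your proof is correct and is essentially the intended argument: the paper itself gives no proof of this proposition, deferring entirely to Chellas \cite{C80}, and your two-directional argument (property-to-validity via the truth clause and the truth-set identities, and validity-to-property by instantiating fresh variables $p,q$ with arbitrary subsets $X,Y\subseteq W$) is precisely the canonical correspondence proof found there. Your one interpretive move --- reading each correspondence at the frame level, i.e.\ quantifying over all valuations, since a fixed model can validate an axiom without its neighbourhood function having the property (not every subset of $W$ need be definable) --- is exactly the right way to make the statement precise, and you handle it correctly.
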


\begin{theorem}\label{compax} {\bf E} is sound and complete with respect to the class of all neighbourhood models. Any logic {\bf X} which is obtained by extending {\bf E} with some axioms from Table \ref{axioms} is sound and complete with respect to the class of all neighbourhood models which satisfies all the properties corresponding to the axioms of {\bf X}.
\end{theorem}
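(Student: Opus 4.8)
The plan is to prove the two directions separately: soundness by a straightforward induction on derivations, and completeness by a canonical-model construction. Write $\C_{\bf X}$ for the class of neighbourhood models satisfying exactly the properties that Proposition~\ref{corrax} associates to the axioms of {\bf X}. For soundness I would show by induction on the length of an axiomatic derivation that every theorem of {\bf X} is valid in $\C_{\bf X}$. The propositional tautologies are valid since the truth clauses for the classical connectives are the standard ones, and $MP$ preserves validity by the clause for $\to$. For $RE$, if $A\leftrightarrow B$ is valid then $||A||^\M=||B||^\M$ in every model, whence $||A||^\M\in N(w)$ iff $||B||^\M\in N(w)$, i.e. $\O A\leftrightarrow\O B$ is valid. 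Finally, each axiom of {\bf X} is valid throughout $\C_{\bf X}$ by the matching clause of Proposition~\ref{corrax}; e.g. in a supplemented model $||A\wedge B||^\M=||A||^\M\cap||B||^\M\in N(w)$ forces $||A||^\M,||B||^\M\in N(w)$, which is exactly the validity of $M$.

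For completeness I would build the canonical model $\M_{\bf X}=\langle W,N,P\rangle$, where $W$ is the set of maximal {\bf X}-consistent sets and $P(p,w)$ is true iff $p\in w$. Writing $|A|:=\{w\in W\mid A\in w\}$ for the proof set of $A$, the Lindenbaum lemma and the usual properties of maximal consistent sets yield $|\bot|=\emptyset$, $|A\wedge B|=|A|\cap|B|$, $W-|A|=|\neg A|$, and the key fact ${\bf X}\vdash A\to B$ iff $|A|\subseteq|B|$, hence ${\bf X}\vdash A\leftrightarrow B$ iff $|A|=|B|$. For logics without $M$ I set $N(w):=\{|A|\mid\O A\in w\}$, and for logics containing $M$ I instead use the superset-closed $N(w):=\{X\subseteq W\mid|A|\subseteq X\ \mbox{for some}\ \O A\in w\}$. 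The heart of the argument is the truth lemma, $\models_w^{\M_{\bf X}}A$ iff $A\in w$, proved by induction on $A$: the propositional cases are routine, and in the modal case one combines $||A||^{\M_{\bf X}}=|A|$ (induction hypothesis) with the proof-set equivalence and rule $RE$ (respectively $RM$, for the superset-closed $N$) to obtain $||A||^{\M_{\bf X}}\in N(w)$ iff $\O A\in w$. Completeness follows at once: if ${\bf X}\not\vdash A$ then $\{\neg A\}$ is {\bf X}-consistent and extends to some $w$ with $A\notin w$, so $\models_w^{\M_{\bf X}}A$ fails.

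It remains to verify that $\M_{\bf X}\in\C_{\bf X}$, i.e. that each axiom of {\bf X} forces the corresponding property of $N$. For the minimal $N$: closure under finite intersection follows from $C$, since $\O A,\O B\in w$ give $\O(A\wedge B)\in w$ and $|A\wedge B|=|A|\cap|B|$; containment of the unit follows from $N$, since $\O\top\in w$ and $|\top|=W$; non-blindness follows from $D^\bot$, since for $\O A\in w$ the case $|A|=\emptyset=|\bot|$ would give $\O\bot\in w$ by $RE$, against $\neg\O\bot\in w$; and complement-freeness follows from $D^\P$, since $\O A\in w$ yields $\neg\O\neg A\in w$, so $|\neg A|=W-|A|\notin N(w)$ by $RE$. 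I expect the supplementation case to be the main obstacle: the minimal canonical model is \emph{not} supplemented, so for every logic containing $M$ one must pass to the superset-closed $N$, re-run the modal clause of the truth lemma with $RM$ in place of $RE$, and check that this enlarged $N$ still validates $C$, $N$, $D^\bot$ and $D^\P$ whenever those axioms are present, so that the composite logics up to {\bf K} and {\bf KD} are covered.
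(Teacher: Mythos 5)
Your proof is correct. Note that the paper does not prove Theorem \ref{compax} itself --- it defers to Chellas \cite{C80} --- and your argument (soundness by induction on derivations, completeness via the canonical minimal model, with the superset-closed neighbourhood function and rule $RM$ replacing $RE$ for logics containing $M$) is essentially the standard proof given in that reference, including the correct identification of supplementation as the point where the minimal canonical construction must be modified.
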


See \cite{C80} for the proof of Proposition \ref{corrax} and of Theorem \ref{compax}.

\section{Sequent Calculi}\label{seccalculi}

 We introduce  sequent calculi for non-normal logics that extend the multiset-based  sequent calculus  {\bf G3cp} \cite{NP01,NP11,T00} for classical propositional logic  -- see Table \ref{G3cp} -- by adding some modal and deontic rules from Table \ref{Modal rules}.   In particular, we  consider the modal sequent calculi given in Table \ref{modcalculi}, which will be shown to capture the modal logics of Figure \ref{cube}, and their deontic extensions given in Table \ref{deoncalculi}, which will be shown to capture all deontic logics of Figure \ref{cubed}.
We adopt the following notational conventions: we use {\bf G3X} to denote a generic calculus from either Table \ref{modcalculi} or Table \ref{deoncalculi}, and we use {\bf G3Y(Z)}  to denote both {\bf G3Y} and {\bf GRYZ}.  All the rules in Tables \ref{G3cp} and  \ref{Modal rules} but  $LR$-$C$ and $L$-$D^{\P_C}$ are standard rules in the sense of \cite{G16}: each of them is a  single rule with a fixed number of premisses; $LR$-$C$ and $L$-$D^{\P_C}$, instead, stand for a recursively enumerable set of rules with a variable number of premisses.   

 For an introduction to {\bf G3cp} and  the relevant notions, the reader is referred to \cite[Chapter 3]{NP01}.
   We sketch here the main notions that will be used in this paper.   A \emph{sequent} is an expression  $\g\To \d$, where $\g$ and $\d$ are finite, possibly empty, multisets of formulas. If $\Pi$ is the (possibly empty) multiset $A_1,\dots,A_m$ then $\O\Pi$ is  the (possibly empty) multiset $\O A_1,\dots,\O A_m$. A \emph{derivation} of a sequent $\g\To\d$ in {\bf G3X} is an upward growing  tree of sequents having $\g\To\d$ as root, initial sequents or instances of rule $L\bot$ as leaves, and such that each non-initial node is the conclusion of an instance of one  rule of {\bf G3X} whose premisses are its children. In the rules in Tables \ref{G3cp} and \ref{Modal rules}, the multisets $\g$ and $\d$ are called \emph{contexts}, the other formulas occurring in the conclusion (premiss(es), resp.) are called \emph{principal} (\emph{active}). In a sequent the \emph{antecedent} (\emph{succedent}) is the multiset occurring to the left (right) of the sequent arrow $\To$.  As for {\bf G3cp}, a sequent $\g\To\d$ has the following  \emph{denotational interpretation}: the conjunction of the formulas in $\g$ implies the disjunction of the formulas in $\d$.
 
As measures for inductive proofs we  use the weight of a formula and the height of a derivation. The \emph{weight} of a formula $A$, $w(A)$, is defined inductively as follows: $w(\bot)=w(p_i)=0$; $w(\O A)=w(A)+1$; $w(A\circ B)=w(A)+w(B)+1$ (where    $\circ$ is one of the binary connectives $\wedge,\,\lor,\,\to$). The \emph{weight} of a sequent is the sum of the weight of the formulas occurring in that sequent.  The \emph{height} of a derivation is the length of its longest branch minus one. A rule of inference is said to be (\emph{height-preserving}) \emph{admissible} in {\bf G3X} if, whenever its premisses are derivable in {\bf G3X}, then also its conclusion is derivable (with at most the same derivation height) in {\bf G3X}. The \emph{modal depth} of a formula (sequent) is the maximal number of nested modal operators occurring in it(s members).
  
%\newpage

   \begin{table}
\caption{ The sequent calculus {\bf G3cp}}\label{G3cp}
\begin{center}
\scalebox{0.85000}{\begin{tabular}{cccc}
\hline\hline\noalign{\smallskip}
 Initial sequents: &$\qquad p_n,\g\To\d,p_n$&& $p_n$ propositional variable \\
 \noalign{\smallskip}\hline\noalign{\smallskip\smallskip}
  Propositional rules:&
  \infer[\infrule L\wedge]{A\wedge B,\gsd}{A,B,\gsd}

&$\qquad$&
\infer[\infrule R\wedge]{\gsd,A\wedge B}{\gsd,A\quad&\gsd,B}

\\\noalign{\smallskip\smallskip}
\infer[\infrule L\bot]{\bot,\gsd}{}&
\infer[\infrule L\lor]{A\lor B,\gsd}{A,\gsd\quad&B,\gsd}

&&
\infer[\infrule R\lor]{\gsd,A\lor B}{\gsd,A,B}

\\\noalign{\smallskip\smallskip}&
\infer[\infrule L\to]{A\to B,\gsd}{\gsd,A\quad&B,\gsd}

&&
\infer[\infrule R\to]{\gsd,A\to B}{A,\gsd,B}

\\
\noalign{\smallskip}\hline\hline
\end{tabular}}
\end{center}
%\end{table}
%
%\begin{table}[t]
\caption{Modal and deontic rules}\label{Modal rules}
\begin{center}
\scalebox{0.87000}{\begin{tabular}{llll}
\hline\hline\noalign{\smallskip}

\infer[\infrule LR\mbox{-}E]{\O A,\gsd,\O B}{A\To B\quad&B\To A}
&
\infer[\infrule LR\mbox{-}M ]{\O A,\gsd,\O B}{A\To B}
&
\multicolumn{2}{c}{\infer[\infrule LR\mbox{-}R]{\O A,\O \Pi,\gsd,\O B}{A,\Pi\To B}}

\\\noalign{\smallskip\smallskip}
\multicolumn{2}{c}{\infer[\infrule LR\mbox{-}C ]{\O A_1,\dots,\O A_n,\gsd,\O B}{A_1,\dots,A_n\To B&B\To A_1&{}^{\dots}&B\To A_n}
}
&
\infer[\infrule LR\mbox{-}K]{\O\Pi,\gsd,\O B}{\Pi\To B}&
\infer[\infrule R\mbox{-}N]{\gsd,\O B}{\To B}
\\\noalign{\smallskip}\hline\noalign{\smallskip}
\end{tabular}}

\scalebox{0.8700}{\begin{tabular}{llllll}

%				LR R
&
\infer[\infrule L\mbox{-}D^\bot]{\O A,\gsd}{A\To}
&
\infer[\infrule L\mbox{-}D^{\P_E}, \,|\Pi|\leq 2]{\O\Pi,\gsd}{\Pi\To&\To\Pi}
&
\infer[\infrule L\mbox{-}D^{\P_M}, \,|\Pi|\leq 2]{\O\Pi,\gsd}{\Pi\To}
\\\noalign{\smallskip\smallskip}
\phantom{aaaaaaaa}&
\multicolumn{2}{c}{\infer[\infrule L\mbox{-}D^{\P_C}]{\O \Pi,\O\Sigma,\gsd}{\Pi,\Sigma\To&\{ \To A, B|\; A\in\Pi, B\in \Sigma \}}
}
&
\infer[\infrule L\mbox{-}D^*]{\O\Pi,\gsd}{\Pi\To}&\phantom{aaaaaaa}
\\
\noalign{\smallskip}\hline\hline
\end{tabular}}\end{center}
%\end{table}   
%
% \begin{table}[t]
 \caption{Modal  sequent calculi (\checkmark= rule of the calculus, $\star$ = admissible rule, $-$ = neither)}\label{modcalculi}
 
 \begin{center}
\scalebox{0.66000}{ \begin{tabular}{r|c|c|c|c|c|c|c|c|c|c|c|c|c|c|}  
\noalign{\smallskip} \hline\hline\noalign{\smallskip\smallskip}
&{\bf G3E}&{\bf G3EN}&{\bf G3M}& {\bf G3MN}&{\bf G3C}&{\bf G3CN}& {\bf G3R}&{\bf G3K}\\

 \noalign{\smallskip}\hline\noalign{\smallskip\smallskip}
$LR$-$E$&\checkmark&\checkmark&$\star$&$\star$&$\star$&$\star$&$\star$&$\star$\\

 \noalign{\smallskip}\hline\noalign{\smallskip\smallskip}
$LR$-$M$&$-$&$-$&\checkmark&\checkmark&$-$&$-$&$\star$&$\star$\\

 \noalign{\smallskip}\hline\noalign{\smallskip\smallskip}
$LR$-$C$&$-$&$-$&$-$&$-$&$\checkmark$&$\checkmark$&$\star$&$\star$\\

 \noalign{\smallskip}\hline\noalign{\smallskip\smallskip}
$LR$-$R$&$-$&$-$&$-$&$-$&$-$&$-$&\checkmark&$\star$\\

 \noalign{\smallskip}\hline\noalign{\smallskip\smallskip}
$LR$-$K$&$-$&$-$&$-$&$-$&$-$&$-$&$-$&\checkmark\\

 \noalign{\smallskip}\hline\noalign{\smallskip\smallskip}
$R$-$N$&$-$&$\checkmark$&$-$&$\checkmark$&$-$&$\checkmark$&$-$&$\star$\\\noalign{\smallskip}\hline\hline
  \end{tabular}}\end{center}
%  \end{table} 
%%
%  \begin{table}
\caption{Deontic sequent calculi (\checkmark= rule of the calculus, $\star$ = admissible rule, $-$ = neither)}\label{deoncalculi}
\begin{center}
\scalebox{0.66000}{\begin{tabular}{l|c|c|c|c|c|c|c|c|c|c|}

\hline\hline\noalign{\smallskip\smallskip\smallskip}
  &{\bf G3E(N)D$^\bot$}&{\bf G3ED$^\P$}&{\bf G3E(N)D}&{\bf G3M(N)D$^\bot$}&{\bf G3M(N)D}&{\bf G3CD$^\P$}&{\bf G3C(N)D}&{\bf G3RD}&{\bf G3KD}\\
  
     \noalign{\smallskip}\hline\noalign{\smallskip\smallskip}
  $L$-$D^\bot$&\checkmark&$-$&\checkmark&\checkmark&$\star$&$-$&$\star$&$\star$&$\star$\\
  
   \noalign{\smallskip}\hline\noalign{\smallskip\smallskip}
  $L$-$D^{\P_E}$&$-$&\checkmark&\checkmark&$-$&$\star$&$\star$&$\star$&$\star$&$\star$\\
  
    \noalign{\smallskip}\hline\noalign{\smallskip\smallskip}
  $L$-$D^{\P_M}$&$-$&$-$&$-$&$-$&\checkmark&$\star$&$\star$&$\star$&$\star$\\
  
   \noalign{\smallskip}\hline\noalign{\smallskip\smallskip}
  $L$-$D^{\P_C}$&$-$&$-$&$-$&$-$&$-$&\checkmark&$\star$&$\star$&$\star$\\
  
   \noalign{\smallskip}\hline\noalign{\smallskip\smallskip}
  $L$-$D^*$&$-$&$-$&$-$&$-$&$-$&$-$&\checkmark&\checkmark&\checkmark
   
\\
\noalign{\smallskip}\hline\hline%\noalign{\smallskip\smallskip}
\end{tabular}}
\end{center}
\end{table}

\subsection{Structural rules of inference}
We are now going to prove that the calculi {\bf G3X} have the same good structural properties of {\bf G3cp}: weakening and contraction are height-preserving admissible  and  cut is admissible. All proofs are extension of those for {\bf G3cp}, see  \cite[Chapter 3]{NP01}; in most cases, the modal rules have to be treated differently from the propositional ones  because of the presence of empty contexts in the premiss(es) of the modal ones. We adopt the following notational convention: given a derivation tree $\de_k$, the derivation tree of the $n$-th leftmost premiss of its last step is  denoted by $\de_{kn}$. We begin by showing that the restriction to atomic initial sequents, which is needed to have the propositional rules invertible, is not limitative in that initial  sequents with arbitrary principal  formula are derivable in {\bf G3X}.

\begin{proposition}\label{genax} Every instance of $A,\g\To\d, A$ is derivable in {\bf G3X}.\end{proposition}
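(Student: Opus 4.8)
The plan is to prove Proposition \ref{genax} by induction on the weight $w(A)$ of the principal formula $A$. This is the standard generalized-axiom argument for G3-style calculi, adapted to account for the modal operator. The base cases are when $A$ is atomic or $\bot$: if $A=p_n$ then $p_n,\g\To\d,p_n$ is already an initial sequent, and if $A=\bot$ then $\bot,\g\To\d,\bot$ is an instance of $L\bot$. So both base cases are immediate from the primitive leaves of the calculus.

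For the inductive step I would split according to the outermost connective of $A$, assuming the claim for all formulas of strictly smaller weight. For the propositional connectives $\wedge,\lor,\to$ the reasoning is exactly as in {\bf G3cp}: for instance, to derive $B\to C,\g\To\d,B\to C$ I apply $L\to$ to reduce it to the two premisses $\g\To\d,B\to C,B$ and $C,\g\To\d,B\to C$, then apply $R\to$ to each, landing on sequents of the form $B,\dots\To\dots,B$ and $C,\dots\To\dots,C$, which are derivable by the induction hypothesis since $w(B),w(C)<w(B\to C)$. The cases for $\wedge$ and $\lor$ are analogous, combining the left and right rules so that each open leaf is an identity sequent on a proper subformula.

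The one genuinely new case is $A=\O B$, and this is where I expect the main obstacle to lie. Here I want to derive $\O B,\g\To\d,\O B$, and the natural move is to apply a modal rule whose principal formulas are exactly the two displayed boxes, reducing the goal to an identity on $B$. The catch is that the available modal rules differ from calculus to calculus (see Table \ref{Modal rules} and Tables \ref{modcalculi}--\ref{deoncalculi}), so the argument must be given uniformly: every calculus {\bf G3X} contains at least one of $LR$-$E$, $LR$-$M$, $LR$-$C$, $LR$-$R$, or $LR$-$K$, and in each case an application of that rule with $\O B$ on both sides of the context reduces the goal to premiss(es) each of which is either $B\To B$ (derivable by the induction hypothesis, as $w(B)<w(\O B)$) or, for the two-premiss symmetric rules like $LR$-$E$, to the pair $B\To B$ and $B\To B$. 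For the cumulative-style rules such as $LR$-$R$ and $LR$-$C$ one simply takes the multiset $\Pi$ (respectively the extra conjuncts) to be empty so that the single relevant premiss is again $B\To B$.

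The reason I flag the modal case as the obstacle is not that any individual derivation is hard, but that the statement quantifies over all the calculi in Tables \ref{modcalculi} and \ref{deoncalculi} at once, so one must check that each of them really does contain a modal rule capable of producing $\O B,\gsd,\O B$ from $B\To B$. The cleanest way to discharge this is to observe that it suffices to verify the claim for the weakest modal rule present in each calculus, exploiting the fact that weakening contexts $\g,\d$ already appear in the conclusions of all these rules; the deontic rules ($L$-$D^\bot$, $L$-$D^{\P}$ variants, $L$-$D^*$) are never needed for this proposition since they have only antecedent principal formulas and the succedent box is always introduced by one of the $LR$ rules or $R$-$N$ that the corresponding calculus also contains. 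This uniform reduction to $B\To B$ completes the induction and hence the proof.
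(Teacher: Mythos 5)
Your proposal is correct and takes essentially the same approach as the paper: induction on the weight of $A$, base cases given by initial sequents and $L\bot$, propositional cases as in \textbf{G3cp}, and the case $A\equiv\O B$ discharged by applying, root-first, the $LR$ rule available in the given calculus so as to reduce the goal to $B\To B$, which is derivable by the inductive hypothesis. The paper's proof merely sketches this ("apply the appropriate rule(s)" and one illustration with $LR$-$M$ in \textbf{G3M(ND)}), whereas you make explicit the calculus-by-calculus check that every \textbf{G3X} contains a suitable $LR$ rule and that the deontic rules are never needed---precisely the content the paper leaves implicit.
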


\begin{proof} By induction on the weight of $A$. If $w(A)=0$ -- i.e., $A$ is atomic or $\bot$ -- then we have an instance of an initial sequent or of a conclusion of $L\bot$ and there is nothing to prove. If $w(A)\geq1$,  we  argue by cases according to the construction of $A$. In each case we apply, root-first, the appropriate rule(s) in order to obtain sequents where some proper subformula of $A$ occurs both in the antecedent and in the succedent. The claim then holds by the inductive hypothesis (IH). To illustrate, if $A\equiv\O B$ and we are in {\bf G3M(ND)}, we have:

$$
\infer[\infrule LR\mbox{-}M]{\O B,\gsd,\O B}{\infer[\infrule IH]{B\To B}{}}
$$
  \end{proof}

\begin{theorem}\label{weak} The left and right rules of weakening are height-preserving admissible in {\bf G3X}\

$$
\infer[\infrule LW]{A,\gsd}{\gsd}
\qquad
\infer[\infrule RW]{\gsd,A}{\gsd}
$$
  \end{theorem}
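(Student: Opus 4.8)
The plan is to prove height-preserving admissibility of the two weakening rules simultaneously by induction on the height $h$ of the derivation $\de$ of the premiss $\gsd$. Since left and right weakening are symmetric, I will describe the argument for $LW$; the case of $RW$ is entirely analogous (working on the succedent instead of the antecedent). The statement to establish is: if $\gsd$ is derivable with height $h$, then $A,\gsd$ is derivable with height at most $h$, for an arbitrary formula $A$. The extra formula $A$ is simply threaded into the contexts throughout the derivation.

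\emph{Base case.} If $h=0$, then $\gsd$ is either an initial sequent $p_n,\g'\To\d',p_n$ or an instance of $L\bot$, i.e. $\bot,\g'\To\d'$. In both cases, adding $A$ to the antecedent still yields, respectively, an initial sequent or an instance of $L\bot$ (the principal atom $p_n$, resp. $\bot$, is untouched, and $A$ just enlarges the context $\g'$). Hence $A,\gsd$ has a derivation of height $0$.

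\emph{Inductive step.} Suppose $h\geq 1$ and let $r$ be the last rule applied in $\de$. The uniform idea is to apply the inductive hypothesis to the premiss(es) of $r$ and then reapply $r$. If $r$ is a propositional rule of {\bf G3cp}, then $A$ lies inside the context $\g$, which is copied (possibly duplicated, as in $L\lor$ or the two-premiss rules) into every premiss; I add $A$ to the antecedent of each premiss using the IH, which does not increase height, and then reapply $r$ to recover $A,\gsd$ with height at most $h$. The delicate point, flagged already in the text, is the modal and deontic rules: in rules such as $LR\text{-}E$, $LR\text{-}M$, $LR\text{-}R$, $LR\text{-}C$, $LR\text{-}K$, $R\text{-}N$, and the $L\text{-}D$ family, the premisses have \emph{empty} contexts, so $A$ does \emph{not} occur in them. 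Here the remedy is easier rather than harder: the principal modal formulas and the weakening context $\g,\d$ appear only in the conclusion, so I can reapply the same modal rule with the enlarged context $A,\g\To\d$ (equivalently, $\g$ replaced by $A,\g$) \emph{without touching the premisses at all}, since the rule schema allows an arbitrary context in the conclusion. Thus for these rules no appeal to the IH on the premisses is even needed, and the height is preserved trivially.

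I expect the only thing requiring care to be bookkeeping rather than genuine difficulty: one must check, rule by rule, that the context into which $A$ is inserted really is a free context of the schema (so that the enlarged sequent is still a legitimate instance of the same rule) and that height is not increased when the IH is invoked on branching rules. The presence of the weakening contexts $\g,\d$ in the conclusions of the modal rules in Table \ref{Modal rules} is exactly what makes this go through uniformly, and it is the reason these calculi behave better than the context-free variants of \cite{I05,I11,L00}. I would present the propositional and the modal cases explicitly for one representative rule each (say $L\lor$ and $LR\text{-}M$) and note that all remaining cases are treated identically.
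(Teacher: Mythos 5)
Your proposal is correct and takes essentially the same route as the paper: induction on the height of the derivation, where propositional rules are handled by applying the inductive hypothesis to the premisses and reapplying the rule, while modal and deontic rules are handled by inserting $A$ directly into the weakening context of the conclusion, leaving the premisses untouched (exactly the paper's treatment, illustrated there with $LR$-$E$). Your observation that the empty premiss contexts make the modal cases trivial rather than problematic, thanks to the free contexts $\g,\d$ in the conclusions of the rules in Table \ref{Modal rules}, is precisely the point the paper makes in contrasting these calculi with those of \cite{I05,I11,L00}.
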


\begin{proof}The proof is a straightforward induction on the height of the derivation $\de$ of $\g\To \d$. %{\bf eliminate? (weakening is built into initial sequents/$L\bot$, and into the arbitrary contexts appearing in the conclusion of the rules)}.
 If the last step of $\de$ is by a propositional rule, we have to apply the same rule to the weakened premiss(es), which are derivable by IH, see \cite[Thm. 2.3.4]{NP01}. If it is by a modal or deontic rule, we proceed by adding $A$ to the appropriate weakening context of the conclusion of that rule  instance. To illustrate, if the last rule is $LR$-$E$, we transform 
 
$$
\infer[\infrule LR\mbox{-}E]{\O B,\gsd,\O C}{\deduce{B\To C}{\vdots\;\de_1}&\deduce{C\To B}{\vdots\;\de_2}  }
\qquad\mbox{into}\qquad
\infer[\infrule LR\mbox{-}E]{\O B,A,\gsd,\O C}{\deduce{B\To C}{\vdots\;\de_1}&\deduce{C\To B}{\vdots\;\de_2}  }
$$
%\begin{tabular}{ccc}
%\begin{prooftree}
%\[
%\leadsto
%B\To C
%\using \de_1\]
%	\[\leadsto
%	 C\To B
%	 \using\de_2
%	 \]
%\justifies
%\O B,\g\To\d,\O C
%\using LR\textrm{-}E\qquad
%\end{prooftree}
%&into$\qquad$&
%\begin{prooftree}
%\[
%\leadsto
%B\To C
%\using \de_1\]
%	\[\leadsto
%	 C\To B
%	 \using\de_2
%	 \]
%\justifies
%\O B,A,\g\To\d,\O C
%\using LR\textrm{-}E
%\end{prooftree}\end{tabular}

{}\end{proof}

\noindent Before considering contraction, we recall some facts that will be useful later on. 
\begin{lemma} In {\bf G3X} the rules$\quad$
\infer[\infrule L\neg]{\neg A,\gsd}{\gsd,A}
%\begin{prooftree}
%\g\To\d,A
%\justifies
%\neg A,\g\To\d
%\using L\neg\quad
%\end{prooftree}
and$\quad$
\infer[\infrule R\neg]{\gsd,\neg A}{A,\gsd}
%\begin{prooftree}
%A,\g\To\d
%\justifies
%\g\To\d,\neg A
%\using R\neg\quad
%\end{prooftree}
are admissible.
\end{lemma}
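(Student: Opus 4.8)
The plan is to prove that the rules $L\neg$ and $R\neg$ are admissible in \textbf{G3X} by reducing negation to its definitional unfolding. Recall that in this paper $\neg A$ is not a primitive connective but merely a shorthand for $A\to\bot$. Consequently these two rules are not genuinely new principles: they are special instances of the implication rules $L\to$ and $R\to$ already present in \textbf{G3cp} (Table \ref{G3cp}), specialised to the case where the consequent is $\bot$. So the entire argument is a matter of inserting and eliminating $\bot$ correctly, together with one application of the rule $L\bot$.

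First I would treat $R\neg$. Suppose we are given a derivation of the premiss $A,\gsd$. Since $\neg A$ abbreviates $A\to\bot$, the conclusion $\gsd,\neg A$ is literally $\gsd, A\to\bot$. The rule $R\to$ derives exactly this from the premiss $A,\gsd,\bot$. Hence it suffices to obtain $A,\gsd,\bot$ from $A,\gsd$, which is an instance of height-preserving weakening on the right (rule $RW$), available by Theorem \ref{weak}. Applying $R\to$ to the weakened premiss yields $\gsd,A\to\bot$, i.e. $\gsd,\neg A$, as required. This direction is therefore immediate once weakening is in hand.

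For $L\neg$ I would argue dually but slightly more carefully, because here $\bot$ must be produced in the succedent of one premiss of $L\to$ and then discharged. We are given a derivation of $\gsd,A$ and we must derive $\neg A,\gsd$, i.e. $A\to\bot,\gsd$. The rule $L\to$ derives $A\to\bot,\gsd$ from the two premisses $\gsd,A$ and $\bot,\gsd$. The left premiss is exactly the premiss we are given. The right premiss $\bot,\gsd$ is an instance of the axiom rule $L\bot$, which has no premisses, so it is derivable outright (in fact with derivation height $0$). Applying $L\to$ to these two derivations gives $A\to\bot,\gsd$, namely $\neg A,\gsd$.

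There is essentially no hard part here: the whole lemma rests on the definitional identity $\neg A \equiv A\to\bot$ and on the fact that $\bot$ is available both as a $0$-ary symbol fillable by $L\bot$ on the left and as a harmless weakening formula on the right. The only point requiring attention — and the closest thing to an obstacle — is that these derivations are \emph{not} height-preserving in the same trivial sense as the premisses: each rule adds one inference step, and $R\neg$ in particular consumes a weakening step. But the statement only asserts plain admissibility, not height-preserving admissibility, so this causes no difficulty, and I would simply remark that $R\neg$ relies on Theorem \ref{weak} while $L\neg$ relies on the rule $L\bot$.
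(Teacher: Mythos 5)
Your proof is correct and follows exactly the paper's own argument: unfolding $\neg A$ as $A\to\bot$, deriving $L\neg$ by an application of $L\to$ whose right premiss $\bot,\gsd$ is closed by $L\bot$, and deriving $R\neg$ by height-preserving admissible right weakening with $\bot$ (Theorem \ref{weak}) followed by $R\to$. Nothing needs to be changed.
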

\begin{proof}
We have the following derivations (the step by $RW$ is admissible thanks to Theorem \ref{weak}):

$$\infer[\infrule L\to]{A\to\bot,\gsd}{\gsd,A&\infer[\infrule L\bot]{\bot,\gsd}{}}
\qquad\qquad
\infer[\infrule R\to]{\gsd,A\to\bot}{\infer[\infrule RW]{A,\gsd,\bot}{A,\gsd}}
$$
\end{proof}

\begin{lemma}\label{inv}All propositional rules are height-preserving invertible in {\bf G3X}, that is the derivability of (a possible instance of) a conclusion of a propositional rule entails the derivability, with at most the same derivation height, of its premiss(es).\end{lemma}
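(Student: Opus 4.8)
The plan is to prove Lemma~\ref{inv} by a standard induction on the height of the derivation of the conclusion, treating each propositional connective separately and in each case distinguishing whether the principal formula of the relevant rule is the one being inverted or merely lives in a context. Recall that height-preserving invertibility of a rule means: if its conclusion is derivable with height $n$, then each of its premisses is derivable with height at most $n$. Since the modal and deontic rules of Table~\ref{Modal rules} never have a propositional principal formula (their principal formulas are always of the form $\O A$), they will only ever occur as context-shuffling steps, and this is exactly where I expect the induction to need care.

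First I would handle the easy half-invertibilities. For the rules whose premise is obtained from the conclusion by ``relaxing'' the multiset structure — namely $L\wedge$, $R\lor$, and $R\to$ — the premise is in fact \emph{provably weaker as a sequent} in the sense that it can be obtained from the conclusion by height-preserving weakening together with a trivial reshuffling; for instance, from a derivation of $\gsd,A\lor B$ one recovers $\gsd,A,B$ directly by an induction that, at the step introducing $A\lor B$ by $R\lor$, simply reads off the premise, and at every other step permutes the inductive hypothesis through. Concretely, for each of these I argue: if the last rule is the one introducing the principal formula in question, the desired premise is literally the premise of that last step (done); otherwise the last rule is some other rule $\rho$, the principal formula sits in a context, and I apply the IH to the premise(s) of $\rho$ and then reapply $\rho$. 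The only subtlety is that for the two-premise rules ($R\wedge$, $L\lor$, $L\to$) I must invert in \emph{both} premises before reapplying the rule, which is fine since the IH gives each premise with no greater height.

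The genuinely delicate cases are the branching rules $R\wedge$, $L\lor$, and $L\to$, where the premise to be recovered is not simply a weakening of the conclusion. Here the induction must show, e.g., that derivability of $\gsd,A\wedge B$ with height $n$ yields derivability of \emph{both} $\gsd,A$ and $\gsd,B$ with height $\leq n$. When the last step introduced $A\wedge B$ by $R\wedge$ we read off the two premises; when it was any other rule $\rho$ — crucially including a modal rule $LR$-$E$, $LR$-$M$, $LR$-$K$, or a deontic rule — the formula $A\wedge B$ occurs in the weakening context $\d$ of $\rho$, so I apply the IH to the premise(s) of $\rho$ (each of which still carries $A\wedge B$ in its context) and then reapply $\rho$ with $A$ (resp. $B$) in place of $A\wedge B$ in the conclusion's context. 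This last reapplication is legitimate precisely because the modal rules of Table~\ref{Modal rules} carry arbitrary weakening contexts $\gsd$, so substituting one context formula for another does not affect their applicability. The main obstacle I anticipate is bookkeeping in exactly these modal/deontic cases: one must check that the formula being inverted always lies in the context and never becomes entangled with a modal principal formula, which holds here only because every modal rule's principal formulas are boxed, so an unboxed compound such as $A\wedge B$, $A\lor B$, or $A\to B$ can never be principal in a modal step. I would state this observation explicitly and then remark that all remaining cases are entirely analogous to the corresponding proof for {\bf G3cp} in \cite[Chapter 3]{NP01}, the only new cases being the modal and deontic rules, which are uniformly handled by the context-permutation argument just described.
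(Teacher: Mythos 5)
Your overall strategy is the paper's: the proof is the standard height induction for {\bf G3cp} (\cite[Thm. 3.1.1]{NP01}) extended with cases for the modal and deontic rules, and your key observation --- that a propositional compound can never be principal in a modal or deontic step, since those rules only have boxed principal formulas --- is exactly the observation the paper relies on. But your handling of the modal case contains a step that would fail as written. You say that when the last rule $\rho$ is modal or deontic, you ``apply the IH to the premise(s) of $\rho$ (each of which still carries $A\wedge B$ in its context) and then reapply $\rho$.'' This is false for the calculi at hand: the rules of Table \ref{Modal rules} have \emph{empty} contexts in their premisses. For instance, $LR$-$E$ concludes $\O A,\g\To\d,\O B$ from the premisses $A\To B$ and $B\To A$; the weakening context $\g,\d$ appears only in the conclusion. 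So if $A\wedge B\in\d$, the premisses do not contain $A\wedge B$ at all, and there is nothing for the inductive hypothesis to act on --- invoking it there is not merely unnecessary but impossible.

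The repair is immediate, and it is what the paper's proof does: leave the premisses of $\rho$ untouched and simply reapply the \emph{same} rule instance with the weakening context of the conclusion altered (with $A$, respectively $B$, in place of $A\wedge B$), which yields the premiss of the rule being inverted with exactly the same derivation height. Since you also state this reapplication step and correctly note that the modal rules ``carry arbitrary weakening contexts,'' your argument survives once the spurious IH invocation is deleted --- indeed the modal case is then \emph{easier} than the propositional context cases, not a site of delicate bookkeeping as you anticipate, because above a modal inference nothing needs inverting: the inversion never propagates past a modal or deontic step.
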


\begin{proof} We have only to extend the proof  for {\bf G3cp}, see  \cite[Thm. 3.1.1]{NP01}, with new cases for the modal and deontic rules. If $A\circ B$ occurs in the antecedent (succedent) of the conclusion of an instance of a  modal or deontic rule then it must be a member of the weakening context $\g$ ($\d$)  of this rule instance, and we have only to change the weakening context according to the rule we are inverting.\end{proof}

\begin{theorem}\label{contr}The left and right rules of contraction are height-preserving admissible in {\bf G3X}\vspace{0.3cm}

$$
\infer[\infrule LC]{A,\gsd}{A,A,\gsd}
\qquad
\infer[\infrule RC]{\gsd,A}{\gsd,A,A}
$$
%\begin{tabular}{cccc}
%$\qquad$&\begin{prooftree}
%A,A,\g\To\d
%\justifies
%A,\g\To\d
%\using LC
%\end{prooftree}&$\qquad$&
%\begin{prooftree}
%\g\To\d,A,A
%\justifies
%\g\To\d,A
%\using RC
%\end{prooftree}
% \end{tabular}
 \end{theorem}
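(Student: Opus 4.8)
The plan is to prove height-preserving admissibility of left and right contraction by simultaneous induction on the height of the derivation $\de$ of the premiss. For the base case, if $\de$ has height $0$ then the premiss is either an initial sequent or a conclusion of $L\bot$; contracting a duplicate formula still leaves an initial sequent (resp.\ an $L\bot$ instance) of the same height, since these have the shape $p_n,\g\To\d,p_n$ and $\bot,\gsd$ and remain of that shape after deleting one copy of a repeated formula. For the inductive step I would distinguish cases according to the last rule applied in $\de$, and within each case according to whether the contraction formula is a context formula or one of the principal formulas.

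For the propositional rules the argument is the standard one from \cite[Thm.\ 3.2.2]{NP01}: if the contracted formula is not principal it sits in the context and we apply the IH to the premiss(es) and reapply the rule; if it is principal, we use height-preserving invertibility (Lemma \ref{inv}) to recover the premisses of the rule with the duplicated active formulas already present, apply the IH to remove the duplication, and reapply the rule. The point of Lemma \ref{inv} is precisely to turn a principal-formula contraction into a context contraction one level up in the derivation, which the IH then handles.

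The genuinely new cases are the modal and deontic rules, and here I expect the main obstacle to lie. Because each modal rule carries a weakening context $\gsd$ while its active formulas live only in the premisses, the good situation is when the two copies of the contraction formula both belong to the context: then I simply apply the IH to the premiss(es) (the context is inherited verbatim, so the duplication is visible there too) and reapply the same rule to the contracted context. The delicate situation is when one or both copies of a repeated $\O$-formula are \emph{principal}. The rules to scrutinize are those that may have several principal boxed formulas on the left, namely $LR$-$R$, $LR$-$C$, $LR$-$K$, and the deontic rules $L$-$D^{\P_E}$, $L$-$D^{\P_M}$, $L$-$D^{\P_C}$, $L$-$D^*$: here two principal occurrences of the same $\O A$ correspond to two copies of $A$ among the active formulas of the premiss, so I would contract those copies inside the premiss via the IH and then reapply the rule with one fewer principal formula. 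The crux, flagged already in the introduction's discussion of rule \emph{D-2}, is that this manoeuvre works only because each premiss-context in these rules is \emph{empty} and the rules are set up so that collapsing a repeated principal $\O$-formula corresponds exactly to an admissible contraction on the active formulas; a rule with a \emph{fixed} number of principal formulas and no room to shrink (like \emph{D-2}) would break the permutation. I would therefore treat $L$-$D^{\P_C}$ and $LR$-$C$ with particular care, checking that after contracting two identical principal boxes the remaining side-premisses (the $\To \p_i$ / $B\To A_i$ premisses) are still exactly those required by an instance of the rule of lower multiplicity, so that the IH on height closes the case without increasing the derivation height.
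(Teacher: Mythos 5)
Your overall strategy (simultaneous induction on the height of the derivation, height-preserving invertibility for principal propositional formulas, and, for rules with several principal boxed formulas, contracting the active copies in the premiss by the IH and reapplying the rule with lower multiplicity) is the same as the paper's, and your treatment of the both-copies-principal case matches the paper's treatment of, e.g., $L$-$D^{\P_E}$. But there is a genuine error in your handling of the case you call the ``good situation''. You justify it by claiming that for the modal rules ``the context is inherited verbatim, so the duplication is visible there too'', and you then propose to apply the IH to the premiss(es). This is false for these calculi: every modal and deontic rule in Table \ref{Modal rules} has \emph{empty} contexts in its premisses (e.g.\ $LR$-$E$ concludes $\O A,\g\To\d,\O B$ from $A\To B$ and $B\To A$); the weakening context $\g,\d$ occurs only in the conclusion. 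So when both copies of the contraction formula sit in the context, they do not occur in the premisses at all, and the IH cannot be applied there --- the step you describe cannot be executed. The correct move, which is what the paper does, is simpler: since the rule admits an arbitrary weakening context, one reapplies the \emph{same} rule to the \emph{same} premisses, now instantiating the context with one fewer occurrence of $A$; height is preserved trivially because the subderivations are untouched. This asymmetry between the propositional rules (contexts copied upward, so the IH is needed) and the modal rules (empty premiss contexts, so mere re-instantiation suffices) is precisely why the paper treats the two groups differently.

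A second, smaller gap: your case split leaves the mixed situation --- one copy of $\O A$ principal in the last rule and the other copy in the weakening context --- without an explicit argument; your prescription for principal copies (contract the corresponding active copies inside the premiss) does not apply there, since only one copy of $A$ occurs among the active formulas. This case is again closed by the re-instantiation move above: keep the principal occurrence principal and reapply the rule with a context containing one fewer occurrence of $\O A$. The paper's second subcase (``some instance of the contraction formula is introduced in the appropriate weakening context of the conclusion'') covers the mixed case and the both-in-context case uniformly in exactly this way.
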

 
 \begin{proof} The proof is by simultaneous  induction on the height  of the derivation $\de$  of the  premiss for left and right  contraction. The base case is straightforward. For the inductive steps, we have different strategies according to whether the last step in $\de$ is by a propositional  rule or not. If the last step in $\de$ is  by a propositional rule, we have two subcases: if the contraction formula is not principal in that step, we apply the inductive hypothesis and then the rule. Else  we start by using the height-preserving invertibility --  Lemma \ref{inv} -- of that rule, and then we apply the inductive hypothesis and the rule, see \cite[Thm. 3.2.2]{NP01} for details. 
 
 If the last step in $\de$ is by a modal or deontic rule, we have two subcases: either (the last step is by one  of $LR$-$C$, $LR$-$R$, $LR$-$K$, $L$-$D^{\P_E}$, $L$-$D^{\P_M}$, $L$-$D^{\P_C}$ and $L$-$D^*$ and) both occurrences of the contraction formula $A$ of $LC$ are principal in the last step or  some  instance of the contraction formula  is    introduced in the appropriate  weakening context of the conclusion. In the first subcase, we apply the inductive hypothesis to the premiss and then the rule. An interesting example is when the last step in $\de$ is by $L$-$D^{\P_E}$. We transform
 
$$
\infer[\infrule LC]{\O B,\g\To\d}{\infer[\infrule L\mbox{-}D^{\P}]{\O B,\O B,\g\To\d}{\deduce{B,B\To\quad}{\vdots\;\de_1}&\deduce{\To B,B}{\vdots\;\de_2}}}
\qquad\textrm{ into }\qquad
\infer[\infrule L\mbox{-}D^{\P}]{\O B,\g\To\d}{\deduce{B\To\qquad}{\vdots\;IH(\de_1)}&\deduce{\To B\qquad}{\vdots\;IH(\de_2)}}
$$
 
\noindent where $IH(\de_1)$ is obtained  by applying the inductive hypothesis for the left rule of contraction to $\de_1$ and $IH(\de_2)$ is obtained by applying the inductive hypothesis for the right rule of contraction to $\de_2$. %Notice that the transformation above works also in the limit case where $\Pi$ is empty.

In the second subcase,  we apply an instance of the same   modal or deontic rule which introduces one less occurrence of $A$ in the appropriate context of the conclusion. Let's consider $RC$.  If the last step is by $LR$-$M$ and no  instance of $A$ is principal in the last rule, we transform\vspace{0.3cm}
$$
\infer[\infrule  RC]{\O B,\g'\To\d',A,\O C}{ \infer[\infrule LR\mbox{-}M]{\O B,\g'\To\d',A,A,\O C}{\deduce{B\To C}{\vdots\;\de_1}}}
\qquad\mbox{into}\qquad
\infer[\infrule LR\mbox{-}M]{\O B,\g'\To\d',A,\O C}{\deduce{B\To C\quad}{\vdots\;\de_1}}
$$
%\begin{tabular}{ccc}
%\begin{prooftree}
%\[\[
%\leadsto
%B\To C 
%\using \de_1\]
%\justifies
%\O B,\g'\To\d',A,A,\O C
%\using LR\textrm{-}M\qquad\]
%\justifies
%\O B,\g'\To\d',A,\O C
%\using RC
%\end{prooftree}
%&into$\qquad$&
%\begin{prooftree}
%\[
%\leadsto
%B\To C
%\using IH(\de_1)\]
%\justifies
%\O B,\g'\To\d',A,\O C
%\using LR\textrm{-}M
%\end{prooftree}\end{tabular} {}   

\end{proof}
 
%Finally we are ready to prove the main theorem of this section, i.e. Gentzen's Hauptsatz
 \begin{theorem}\label{cut}The rule of cut is admissible in {\bf G3X}
$$
\infer[\infrule Cut]{\g,\Pi\To\d,\Sigma}{\deduce{\gsd,D}{\vdots\;\de_1}&\deduce{D,\Pi\To\Sigma}{\vdots\;\de_2}}
$$ 
% \begin{center}
% \begin{prooftree}
%\[
%\leadsto
% \g\To\d,D\quad
% \using\de_1\]
% \[
% \leadsto
%  D,\Pi\To\Sigma
%  \using\de_2\]
% \justifies
% \g,\Pi\To\d,\Sigma
% \using Cut
% \end{prooftree}
% \end{center}
\end{theorem}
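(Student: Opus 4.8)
The plan is to adapt the Gentzen-style argument for G3 calculi, see \cite[Thm.~3.2.3]{NP01}, arguing by a primary induction on the weight $w(D)$ of the cut formula $D$ and a secondary induction on the \emph{cut height} $h(\de_1)+h(\de_2)$. Because contraction is height-preserving admissible (Theorem \ref{contr}), I can work throughout with the single-premiss rule $Cut$ displayed above and never pass to the rule of multicut, in contrast with the calculi of \cite{I11}. I would split the argument into three cases according to the status of $D$ in the last rules of $\de_1$ and $\de_2$. Case~1 is when at least one premiss is an initial sequent or a conclusion of $L\bot$: then the conclusion of the cut is itself an initial sequent, a conclusion of $L\bot$, or a height-preserving weakening (Theorem \ref{weak}) of the other premiss, so no induction is needed.

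Case~2 is when $D$ is not principal in the last rule of at least one premiss, say $\de_2$. If that rule is propositional I would permute the cut upward exactly as for {\bf G3cp}, invoking the secondary induction hypothesis on the now-lower cut heights and closing with height-preserving contraction whenever a context gets duplicated. The genuinely new subcase is when the last rule of $\de_2$ is modal or deontic: then $D$ must lie in the weakening context $\g$ or $\d$ of that rule instance, and this is precisely what the weakening contexts bought us. I re-apply the \emph{same} modal or deontic rule to the \emph{same} premisses, now absorbing the residual context of $\de_1$ into the weakening context and simply dropping the occurrence of $D$; the cut thereby disappears outright rather than merely being permuted. The symmetric move handles the case in which $D$ is contextual in $\de_1$.

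Case~3, the principal--principal case, is the crux. Here $D\equiv\O B$, the last rule of $\de_1$ is a right-introduction rule for $\O$ (one of $LR$-$E$, $LR$-$M$, $LR$-$C$, $LR$-$R$, $LR$-$K$, $R$-$N$), and the last rule of $\de_2$ has $\O B$ among its principal antecedent formulas (a two-sided modal rule or one of the deontic rules $L$-$D^\bot$, $L$-$D^{\P_E}$, $L$-$D^{\P_M}$, $L$-$D^{\P_C}$, $L$-$D^*$). Since all the logics are rank-1, the premisses of both rules speak only about the immediate subformulas occurring under the boxes, so the end-sequent can always be rebuilt by cutting on $B$ (and, when several boxes are principal, on the relevant immediate subformulas). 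As $w(B)<w(\O B)=w(D)$, each such cut is available by the primary induction hypothesis with no constraint on the heights of the new premisses; re-applying the rule of $\de_2$ to the resulting sequents, with the leftover contexts of $\de_1$ and $\de_2$ placed into its weakening context, yields the desired conclusion. For instance, with $LR$-$E$ on both sides, from $A\To B$, $B\To A$, $B\To C$, $C\To B$ I would form $A\To C$ and $C\To A$ by two cuts on $B$ and close by $LR$-$E$.

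The main obstacle I anticipate is the bookkeeping for the rules with a variable number of premisses and with side conditions, namely $LR$-$C$ and $L$-$D^{\P_C}$, together with the cardinality restriction $|\Pi|\leq 2$ on $L$-$D^{\P_E}$ and $L$-$D^{\P_M}$. For these I must verify, calculus by calculus, that every one of the finitely many premisses demanded by the re-applied rule, i.e.\ the joint premiss $A_1,\dots,A_n,\dots\To F$ and each minor premiss of the form $F\To A_i$ or $\To A,B$, is indeed produced by a single cut on $B$ of strictly smaller weight, and that the reconstructed principal multiset still meets the rule's side condition. Checking this uniformly over all the combinations of rules that actually co-occur in a given {\bf G3X}, read off from the primitive/admissible pattern of Tables \ref{modcalculi} and \ref{deoncalculi}, is the laborious but essentially routine heart of the proof.
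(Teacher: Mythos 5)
Your proposal reproduces the paper's proof essentially step for step: the same primary induction on $w(D)$ with sub-induction on cut-height, the same three-case split, the same key observation that in Case~2 a modal or deontic last rule lets the cut vanish outright because $D$ sits in a weakening context, and the same lower-weight cross-cuts in the principal--principal case (your $LR$-$E$/$LR$-$E$ example is exactly the paper's), with the paper then carrying out precisely the calculus-by-calculus verification you defer. One small correction to your Case~3 recipe: the closing rule is not always ``the rule of $\de_2$'' --- when the left premiss ends with $R$-$N$ (or an $LR$-$K$ instance with no boxed antecedent formula) the reconstruction may instead end with $R$-$N$, with $L$-$D^\bot$, or with no rule at all (in {\bf G3END$^\bot$} the cut of $\To C$ against $C\To$ yields the empty sequent and the conclusion is recovered purely by admissible $LW$ and $RW$ steps; similarly the $(\star)$ step for {\bf G3END} switches between $L$-$D^\bot$ and weakenings according to $|\Xi|$), so your promised bookkeeping must allow the re-applied rule to change or disappear in these degenerate subcases.
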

 
 \begin{proof}
 We consider an uppermost application of $Cut$ and we show that either it is eliminable, or it can be permuted upward in the derivation until we reach sequents where it is eliminable. The proofs, one for each calculus, are by induction on the weight of the cut formula $D$ with a sub-induction on the sum of the heights of the derivations of the two premisses (cut-height for shortness). The proof can be organized in 3 exhaustive cases:
 
 \begin{enumerate}
 \item At least one of the premisses of cut  is an initial sequent or a conclusion of $L\bot$;
\item The cut formula in not principal in the last step of at least one of the two premisses;
 \item The cut formula is principal in both premisses.
 \end{enumerate}

%44444444444444444444444444444444444444444
\medskip
\noindent {\bf{\large\textbullet}$\quad$ Case (1).}$\quad$ Same as for {\bf G3cp}, see \cite[Thm. 3.2.3]{NP01} for the details.
 \medskip
 
 %%%%%%%%%%%
 \medskip
\noindent {\bf{\large\textbullet}$\quad$ Case (2).}$\quad$ We have many subcases according to the last rule applied in the derivation  ($\de^\star$)  of the  premiss where the cut formula is not principal. For the propositional rules, we refer the reader to \cite[Thm. 3.2.3]{NP01}, where it is given a procedure that allows to reduce the cut-height. If the last rule applied in $\de^\star$ is a modal or deontic one, we  can transform the derivation into a cut-free one because the conclusion of \mbox{\it{Cut}} is derivable  by replacing the last step of  $\de^\star$ with  the appropriate instance of the same modal or deontic rule. We present explicitly only the cases where the last step of the left premiss is by $LR$-$E$ and $L$-$D^\bot$ and the cut formula is not principal in it, all other transformations being similar.
 \medskip

\noindent $\mathbf{LR\textrm{-}E}:\quad$ \, If the left premiss  is by rule $LR$-$E$  (and $\g\equiv \Box A,\g'$ and $\d\equiv \d',\Box B$), we  transform %\vspace{0.3cm}

$$
\infer[\infrule Cut]{\O A,\g',\Pi\To\d',\O B,\Sigma}{\infer[\infrule LR\mbox{-}E]{\O A,\g'\To\d',\O B,D}{\deduce{A\To B}{\vdots\;\de_{11}}&\deduce{B\To A}{\vdots\;\de_{12}}}&\deduce{D,\Pi\To\Sigma}{\vdots\;\de_2}}
\qquad\mbox{into}\qquad
\infer[\infrule LR\mbox{-}E ]{\O A,\g',\Pi\To\d',\O B,\Sigma}{\deduce{A\To B}{\vdots\;\de_{11}}&\deduce{B\To A}{\vdots\;\de_{12}}}
$$
%\noindent\begin{tabular}{lll} 
%\begin{prooftree}
%\[
%\[
%\leadsto
%A\To B
%\using\de_{11}\]
%	\[
%	\leadsto
%	 B\To A
%	 \using\de_{12}\]
%\justifies
%\O A,\g\To\d,\O B,D
%\using LR\textrm{-}E\]
%	\[\leadsto
%	D,\Pi\To\Sigma\qquad
%	\using\de_2\]
%\justifies
%\O A,\g,\Pi\To\d,\O B,\Sigma
%\using Cut
%\end{prooftree}&$\quad$into$\quad$ &
%\begin{prooftree}
%\[
%\leadsto
%A\To B
%\using\de_{11}\]
%	\[
%	\leadsto
%	 B\To A
%	 \using\de_{12}\]
%\justifies
%\O A,\g,\Pi\To\d,\O B,\Sigma
%\using LR\textrm{-}E
%\end{prooftree}
%\end{tabular}\vspace{0.3cm}

\noindent $\mathbf{L}${\bf-}$\mathbf{D^\bot}:\quad$ \, If the left premiss  is by rule $L$-$D^\bot$, we  transform %\vspace{0.3cm}

$$
\infer[\infrule Cut]{\O A,\g',\Pi\To\d,\Sigma}{\infer[\infrule L\mbox{-}D^\bot]{\O A,\g'\To\d,D}{\deduce{A\To }{\vdots\;\de_{11}}}&\deduce{D,\Pi\To\Sigma}{\vdots\;\de_2}}
\qquad\mbox{into}\qquad
\infer[\infrule L\mbox{-}D^\bot ]{\O A,\g',\Pi\To\d,\Sigma}{\deduce{A\To }{\vdots\;\de_{11}}}
$$
%\noindent\begin{tabular}{lll} 
%\begin{prooftree}
%\[\[
%\leadsto
%A\To 
%\using\de_{11}\]
%\justifies
%\O A,\g\To\d,D
%\using LD^\bot\]
%	\[
%	\leadsto
%	D,\Pi\To\Sigma\qquad
%	\using\de_2\]
%\justifies
%\O A,\g,\Pi\To\d,\Sigma
%\using Cut
%\end{prooftree}&$\quad$ 
%into 
%$\quad$ &
%\begin{prooftree}
%\[
%\leadsto
%A\To 
%\using\de_{11}\] 
%\justifies
%\O A,\g,\Pi\To\d,\Sigma
%\using LD^\bot
%\end{prooftree}
%\end{tabular}\vspace{0.3cm}

 %%%%%%%%%%%%%%%
 
%\medskip
%\noindent {\bf{\large\textbullet}$\quad$ Case (4)}$\quad$ If the cut formula $D$ is principal in the left premiss only, the procedure is analogous to that for case (3). If the right premiss has been derived by a propositional rule, see \cite[Thm. 3.2.3]{NP01}. If it has been derived by  a modal rule, we can once again obtain the conclusion of \mbox{\it{Cut}} by an appropriate instance of the same rule. The details are left to the reader.%We omit the details for the sake of concision.
% \medskip

%5555555555555555555555555555555555555

\medskip

\noindent {\bf {\large\textbullet}$\quad$ Case (3).}$\quad$ If the cut formula $D$  is principal in both premisses, we have cases according to the principal operator of $D$. In each case we have a procedure that allows to reduce the weight of the cut formula, possibly increasing the cut-height. For the propositional cases, which are the same for all the logics considered here, see \cite[Thm. 3.2.3]{NP01}.

 If $D\equiv\O C$, we consider the different logics one by one, without repeating the common cases. \\

\noindent\textbullet$\quad${\bf G3E(ND).}$\quad$ Both premisses are by rule $LR$-$E$, we have

$$
\infer[\infrule Cut]{\O A,\g',\Pi\To\d,\Sigma',\O B}{\infer[\infrule LR\mbox{-}E]{\O A,\g'\To\d,\O C}{\deduce{A\To C}{\vdots\;\de_{11}}&\deduce{C\To A}{\vdots\;\de_{12}}}&\infer[\infrule LR\mbox{-}E]{\O C,\Pi\To\Sigma',\O B}{\deduce{C\To B}{\vdots\;\de_{21}}&\deduce{B\To C}{\vdots\;\de_{22}}}}
$$
%\vspace{0.3cm}
%\begin{prooftree}
%\[
%\[
%\leadsto\using\de_{11}
%\justifies
%A\To C\]
%	\[\leadsto\using\de_{12}
%	\justifies
%	\qquad C\To A\qquad\]
%\justifies
%\O A,\g\To\d,\O C
%\using LR\textrm{-}E\]
%		\[
%		\[
%\leadsto\using\de_{21}
%\justifies
%C\To B\]
%	\[\leadsto\using\de_{22}
%	\justifies
%	\qquad B\To C\qquad\]
%		\justifies
%		\O C,\Pi\To\Sigma,\O B
%		\using LR\textrm{-}E\]
%\justifies
%\O A,\g,\Pi\To\d,\Sigma,\O B
%\using Cut
%\end{prooftree}\vspace{0.3cm}

\noindent and we transform it into the following derivation that has two  cuts with cut formulas of lesser weight, which are admissible by IH.

$$
\infer[\infrule LR\mbox{-}E]{\O A,\g',\Pi\To\d,\Sigma',\O B}{\infer[\infrule Cut]{ A\To B}{\deduce{A\To C}{\vdots\;\de_{11}}&\deduce{C\To B}{\vdots\;\de_{21}}}&\infer[\infrule Cut]{B\To A}{\deduce{B\To C}{\vdots\;\de_{22}}&\deduce{C\To A}{\vdots\;\de_{12}}}}
$$
%\vspace{0.3cm}
%\begin{prooftree}
%\[
%\[
%\leadsto\using\de_{11}
%\justifies
%A\To C\]
%	\[\leadsto\using\de_{21}
%	\justifies
%	\qquad C\To B\qquad\]
%\justifies
%A\To B
%\using Cut\]
%		\[
%		\[
%\leadsto\using\de_{22}
%\justifies
%B\To C\]
%	\[\leadsto\using\de_{12}
%	\justifies
%	\qquad C\To A\qquad\]
%		\justifies
%		B\To A
%		\using Cut\]
%\justifies
%\O A,\g,\Pi\To\d,\Sigma,\O B
%\using LR\textrm{-}E
%\end{prooftree}\vspace{0.3cm}

%
%\noindent \begin{tabular}{lll}
%\begin{prooftree}
%\[
%\[
%\leadsto\using\de_{11}
%\justifies\To C\]
%\justifies
%\g\To\d,\O C
%\using RN\]
%\[
%\[
%\leadsto\using\de_{21}
%\justifies
%C\To A\]
%	\[\leadsto\using\de_{22}
%	\justifies
%	\qquad A\To C\qquad\]
%\justifies
% \O C,\Pi\To\Sigma,\O A
%\using LR\textrm{-}E\]
%		\justifies
%\g,\Pi\To\d,\Sigma,\O A
%\using Cut
%\end{prooftree}& $\quad$into $\quad$&
%\begin{prooftree}
%\[
%\[
%\leadsto\using\de_{11}
%\justifies
%\To C\]
%	\[\leadsto\using\de_{21}
%	\justifies
%	\qquad C\To A\qquad\]
%\justifies
%\To A
%\using Cut\]
%\justifies
%\g,\Pi\To\d,\Sigma,\O A
%\using RN
%\end{prooftree}
%\end{tabular}
%\vspace{0.3cm}

\noindent\textbullet$\quad${\bf G3EN(D).}$\quad$ Left premiss by $R$-$N$ and right one by $LR$-$E$. We transform%\vspace{0.3cm}

$$
\infer[\infrule Cut]{\g,\Pi\To\d,\Sigma',\O A}{\infer[\infrule R\mbox{-}N]{\gsd,\O C}{\deduce{\To C}{\vdots\;\de_{11}}}&\infer[\infrule LR\mbox{-}E]{\O C,\Pi\To\Sigma',\O A}{\deduce{C\To A}{\vdots\;\de_{21}}&\deduce{A\To C}{\vdots\;\de_{22}}}}
\qquad\mbox{into}\qquad
\infer[\infrule R\mbox{-}N ]{\g,\Pi\To\d,\Sigma',\O A}{\infer[\infrule Cut]{\To A}{\deduce{\To C}{\vdots\;\de_{11}}&\deduce{C\To A}{\vdots\,\de_{21}}}}
$$

\noindent \textbullet$\quad${\bf G3E(N)D$^\bot$.}$\quad$
Left premiss is by $LR$-$E$, and  right one by $L$-$D^\bot$. We transform 
%\vspace{0.01cm}

 $$
\infer[\infrule Cut]{\O A,\g',\Pi\To\d,\Sigma}{\infer[\infrule LR\mbox{-}E]{\O A,\g'\To\d,\O C}{\deduce{A\To C}{\vdots\;\de_{11}}&\deduce{C\To A}{\vdots\;\de_{12}}}&\infer[\infrule L\mbox{-}D^\bot]{\O C,\Pi\To\Sigma}{\deduce{C\To }{\vdots\;\de_{21}}}}
\qquad\mbox{into}\qquad
\infer[\infrule L\mbox{-}D^\bot ]{\O A,\g',\Pi\To\d,\Sigma}{\infer[\infrule Cut]{A\To }{\deduce{A\To C}{\vdots\;\de_{11}}&\deduce{C\To }{\vdots\,\de_{21}}}}
$$

\noindent \textbullet$\quad${\bf G3E(N)D$^\P$.}$\quad$
Left premiss is by $LR$-$E$, and  right one by $L$-$D^{\P_E}$. We transform ($|\Xi|\leq 1$)%\vspace{0.01cm}

{\small $$
\infer[\infrule Cut]{\O A,\g',\O\Xi,\Pi'\To\d,\Sigma}{\infer[\infrule LR\mbox{-}E]{\O A,\g'\To\d,\O C}{\deduce{A\To C}{\vdots\;\de_{11}}&\deduce{C\To A}{\vdots\;\de_{12}}}&\infer[\infrule L\mbox{-}D^{\P_E}]{\O C,\O\Xi,\Pi'\To\Sigma}{\deduce{C,\Xi\To }{\vdots\;\de_{21}}&\deduce{\To C,\Xi}{\vdots\;\de_{22}}}}
%$$
%$$
%\begin{prooftree}
%\[
%\[
%\leadsto\using\de_{11}
%\justifies
%A\To C\]
%	\[\leadsto\using\de_{12}
%	\justifies
%	\qquad C\To A\qquad\]
%\justifies
% \O A,\g\To\d,\O C
%\using LR\textrm{-}E\]
%		\[
%		\[\leadsto\using\de_{21}\justifies
%		C,\Xi\To\qquad\] 
%		\[
%		\leadsto\using\de_{22}\justifies
%		\To\Xi,C\qquad\]
%		\justifies
%		\O C,\O\Xi,\Pi\To\Sigma
%		\using LD^\P\]
%\justifies
%\O A,\g,\O\Xi,\Pi\To\d,\Sigma
%\using Cut
%\end{prooftree}$$
\text{into} 
%$$
\infer[\infrule L\mbox{-}D^{\P_E}]{\O A,\g',\O\Xi,\Pi'\To\d,\Sigma}{\infer[\infrule Cut]{ \To\Xi,A}{\deduce{\To \Xi, C}{\vdots\;\de_{22}}&\deduce{C\To A}{\vdots\;\de_{12}}}&\infer[\infrule Cut]{A,\Xi\To }{\deduce{A\To C}{\vdots\;\de_{11}}&\deduce{C,\Xi\To }{\vdots\;\de_{21}}}}
$$}

\noindent\textbullet$\quad${\bf G3E(N)D.}$\quad$ Left premiss by $LR$-$E$ and right one by   $L$-$D^\bot$ or $L$-$D^{\P_E}$. Same as  above.\vspace{0.3cm}

\noindent\textbullet$\quad${\bf G3END$^\bot$.}$\quad$ Left premiss by $R$-$N$ and right one by $L$-$D^\bot$. We transform %{\bf STRANGE!}%\footnote{ We use double lines to indicate the simultaneous application of some (possibly zero) admissible rules.}

 $$
\infer[\infrule Cut]{\g,\Pi\To\d,\Sigma}{\infer[\infrule R\mbox{-}N]{\gsd,\O C}{\deduce{\To C}{\vdots\;\de_{11}}}&\infer[\infrule L\mbox{-}D^\bot]{\O C,\Pi\To\Sigma}{\deduce{C\To }{\vdots\;\de_{21}}}}
\qquad\mbox{into}\qquad
\infer=[\infrule LWs\mbox{ and }RWs ]{\g,\Pi\To\d,\Sigma}{\infer[\infrule Cut]{\phantom{C}\To^{} }{\deduce{\To C}{\vdots\;\de_{11}}&\deduce{C\To }{\vdots\,\de_{21}}}}
$$

%\noindent\textbullet$\quad${\bf G3END$^\P$.}$\quad$ Left premiss by $RN$ and right one by $LD^\P$. Similar to the  cases before.

\noindent\textbullet$\quad${\bf G3END.}$\quad$ Left premiss by $R$-$N$ and right one by $L$-$D^{\P_E}$. We transform ($|\Xi|\leq 1$)

$$
\infer[\infrule Cut]{\O \Xi,\g,\Pi'\To\d,\Sigma}{
\infer[\infrule R\mbox{-}N]{\g\To\d,\O C}{\deduce{\To C}{\vdots\;\de_{11}}}&
\infer[\infrule L\mbox{-}D^\P]{\O C,\O \Xi,\Pi'\To\Sigma}{\deduce{C,\Xi}{\vdots\;\de_{21}}\To&\To \deduce{C,\Xi}{\vdots\;\de_{22}}}}
\qquad\text{into}\qquad
\infer[\infrule (\star)]{\O \Xi,\g,\Pi'\To\d,\Sigma}{\infer[\infrule Cut]{\Xi\To}{
\deduce{\To C}{\vdots\;\de_{11}}&
\deduce{C,\Xi\To}{\vdots\;\de_{21}}}}
$$
where $(\star)$ is an instance of $L$-$D^\bot$ if $|\Xi|=1$, else ($|\Xi|=0$ and) it is some instances\mbox{ of $LW$ and $RW$.} 
%We transform %{\bf STRANGE!}%\footnote{ We use double lines to indicate the simultaneous application of some (possibly zero) admissible rules.}
%\vspace{0.3cm}
%
%\noindent \begin{tabular}{lll}
%{\footnotesize\begin{prooftree}
%\[
%\[\leadsto\using\de_{11}\justifies\To C\]
%\justifies
%\g\To\d,\O C
%\using RN\]
%\[
%\[\leadsto\using\de_{21}\justifies C\To \]
%\justifies
% \O C,\Pi\To\Sigma,\O A
%\using LD\]
%		\justifies
%\g,\Pi\To\d,\Sigma,\O A
%\using Cut
%\end{prooftree}}& $\quad$into $\quad$&
%{\footnotesize\begin{prooftree}
%\[
%\[\leadsto\using\de_{11}\justifies\To C\]
%					\[\leadsto\using\de_{21}\justifies \qquad C\To \qquad\]
%\justifies
%\To 
%\using Cut\]
%\Justifies
%\g,\Pi\To\d,\Sigma,\O A
%\using LWs \textrm{ and }RWs
%\end{prooftree}}
%\end{tabular}
%\vspace{0.3cm}

\noindent \textbullet$\quad${\bf G3M(ND).}$\quad$
 Both premisses are by rule $LR$-$M$, we transform%\vspace{0.3cm}
 $$
\infer[\infrule Cut]{\O A,\g',\Pi\To\d,\Sigma',\O B}{\infer[\infrule LR\mbox{-}M]{\O A,\g'\To\d,\O C}{\deduce{A\To C}{\vdots\;\de_{11}}}&\infer[\infrule LR\mbox{-}M]{\O C,\Pi\To\Sigma',\O B}{\deduce{C\To B}{\vdots\;\de_{21}}}}
\qquad\mbox{into}\qquad
\infer[\infrule LR\mbox{-}M]{\O A,\g',\Pi\To\d,\Sigma',\O B}{\infer[\infrule Cut]{A\To B }{\deduce{A\To C}{\vdots\;\de_{11}}&\deduce{C\To B}{\vdots\,\de_{21}}}}
$$
%\noindent \begin{tabular}{lll}
%\begin{prooftree}
%\[
%\[\leadsto\using\de_{11}\justifies A\To C\]
%\justifies
%\O A,\g\To\d,\O C
%\using LR\textrm{-}M\]
%		\[
%		\[\leadsto\using\de_{21}\justifies C\To B\]
%		\justifies
%		\O C,\Pi\To\Sigma,\O B
%		\using LR\textrm{-}M\]
%\justifies
%\O A,\g,\Pi\To\d,\Sigma,\O B
%\using Cut
%\end{prooftree}
%& into &
%\begin{prooftree}
%\[
%\[\leadsto\using\de_{11}\justifies A\To C\]
%							\[\leadsto\using\de_{21}\justifies\qquad C\To B\qquad\]
%\justifies
%A\To B
%\using Cut\]
%\justifies
%\O A,\g,\Pi\To\d,\Sigma,\O B
%\using LR\textrm{-}M
%\end{prooftree}
%\end{tabular}\vspace{0.3cm}

%We transform\vspace{0.3cm}
%
%\noindent \begin{tabular}{lll}
%\begin{prooftree}
%\[
%\[\leadsto\using\de_{11}\justifies\To C\]
%\justifies
%\g\To\d,\O C
%\using RN\]
%\[
%\[\leadsto\using\de_{21}\justifies C\To A\quad \]
%\justifies
% \O C,\Pi\To\Sigma,\O A
%\using LR\textrm{-}M\]
%		\justifies
%\g,\Pi\To\d,\Sigma,\O A
%\using Cut
%\end{prooftree}& $\quad$into $\quad$&
%\begin{prooftree}
%\[
%\[\leadsto\using\de_{11}\justifies\To C\]
%					\[\leadsto\using\de_{21}\justifies\qquad C\To A\qquad \]
%\justifies
%\To A
%\using Cut\]
%\justifies
%\g,\Pi\To\d,\Sigma,\O A
%\using RN
%\end{prooftree}
%\end{tabular}
%\vspace{0.3cm}

\noindent\textbullet$\quad${\bf G3MN(D).}$\quad$ Left premiss by $R$-$N$ and right one by $LR$-$M$. Similar to the case with left premiss by $R$-$N$ and right one by $LR$-$E$.\vspace{0.3cm}

\noindent \textbullet$\quad${\bf G3M(N)D$^\bot$} and {\bf G3M(N)D.}$\quad$
Left premiss is by $LR$-$M$, and  right one by  $L$-$D^\bot$ or $L$-$D^{\P_M}$. Similar to the  case with left premiss by $LR$-$E$ and right by $L$-$D^\bot$ or $L$-$D^{\P_E}$, respectively.\vspace{0.3cm}

%\noindent \textbullet$\quad${\bf G3M(N)D$^\star$.}$\quad$
%Left premiss is by $LR$-$M$, and  right one by $LD^\star$. Similar to the case with left premiss by $LR$-$E$ and right one by $LD^\star$.\vspace{0.3cm}
%We transform \vspace{0.3cm}
% 
%\noindent \begin{tabular}{lll}
%\begin{prooftree}
%\[
%\[\leadsto\using\de_{11}\justifies A\To C\] 
%\justifies
% \O A,\g\To\d,\O C
%\using LR\textrm{-}M\]
%		\[
%		\[\leadsto\using\de_{21}\justifies C\To \]
%		\justifies
%		\O C,\Pi\To\Sigma
%		\using LD\]
%\justifies
%\O A,\g,\Pi\To\d,\Sigma
%\using Cut
%\end{prooftree}& $\quad$into $\quad$&
%\begin{prooftree}
%\[
%\[\leadsto\using\de_{11}\justifies A\To C\] 
%			\[\leadsto\using\de_{21}\justifies\qquad C\To \qquad\]
%\justifies
%A\To
%\using Cut\]
%\justifies
%\O A,\g,\Pi\To\d,\Sigma
%\using LD
%\end{prooftree}
%\end{tabular}
%\vspace{0.3cm}

\noindent\textbullet$\quad${\bf G3MND$^\bot$} and {\bf G3MND.}$\quad$ The cases with left premiss by $R$-$N$ and right one by  a deontic rule are like the analogous ones we have already considered.

\medskip
\noindent \textbullet$\quad${\bf G3C(ND).} Both premisses are by rule $LR$-$C$. Let us agree to use $\Lambda$ to denote the non-empty  multiset $A_1,\dots,A_n$, and $\Xi$ for the (possibly empty) multiset $B_2,\dots B_m$.  The derivation

$$
\infer[\infrule Cut]{\O\Lambda,\g',\O\Xi,\Pi'\To\d,\Sigma', \O E}{\infer[\infrule LR\mbox{-}C]{\O\Lambda,\g'\To\d,\O C}{\deduce{\Lambda\To C}{\vdots \;\de_{11}}& \deduce{C\To A_1}{\vdots \;\de_{A_1}}&{}^{\dots}&\deduce{C\To A_n}{\vdots \;\de_{A_n}}}&
\infer[\infrule LR\mbox{-}C]{\O C,\O\Xi,\Pi'\To\Sigma',\O E}{\deduce{C,\Xi\To E}{\vdots \;\de_{21}}&\deduce{ E\To C}{\vdots \;\de_{C}}&{}^{\dots}& \deduce{E\To B_m}{\vdots \;\de_{B_m}}}}
$$

\noindent is transformed into the following derivation having  $n+1$ cuts on formulas of lesser weight
\noindent\scalebox{0.8000}{$$
\infer[\infrule LR\mbox{-}C]{\O\Lambda,\g',\O\Xi,\Pi'\To\d,\Sigma', \O E}{\infer[\infrule Cut]{\Lambda,\Xi\To E}{\deduce{\Lambda\To C}{\vdots\:\de_{11}}&\deduce{C,\Xi\To E}{\vdots\;\de_{21}}}&
\infer[\infrule{Cut}\;\dots]{E\To A_1}{\deduce{E\To C}{\vdots\:\de_{C}}&\deduce{C\To A_1}{\vdots\;\de_{A_n}}}&
%\infer[Cut]{}{}&
\infer[\infrule Cut]{E\To A_n}{\deduce{E\To C}{\vdots\:\de_{C}}&\deduce{C\To A_n}{\vdots\;\de_{A_n}}}&
 \deduce{E\To B_1}{\vdots \;\de_{B_1}}
 &{}^{\dots}& \deduce{E\To B_m}{\vdots \;\de_{B_m}}
 }
$$}\vspace{0.3cm}

\noindent \textbullet$\quad${\bf G3CN(D).} Left premiss by $R$-$N$ and right premiss by $LR$-$C$. We have

$$
\infer[\infrule Cut]{\g,\O A_1,\dots,\O A_n,\Pi'\To\d,\Sigma', \O B}{\infer[\infrule R\mbox{-}N]{\gsd,\O C}{\deduce{\To C}{\vdots\;\de_{11}}}&
\infer[\infrule LR\mbox{-}C]{\O C,\O A_1,\dots,\O A_n,\Pi'\To\Sigma',\O B}{\deduce{C,A_1,\dots,A_n\To B}{\vdots \;\de_{21}}&\deduce{ B\To C}{\vdots \;\de_{C}}&{}^{\dots}& \deduce{B\To A_n}{\vdots \;\de_{A_n}}}}
$$
%$$
%\begin{prooftree}
%\[
%\[
%\leadsto\using\de_{11}
%\justifies
%\To C\]
%\justifies
%\g\To\d,\O C
%\using RN\]
%		\[
%		\[
%		\leadsto\using\de_{21}
%		\justifies
%		C,A_1,\dots,A_n\To B\]
%		\[\leadsto\using\de_{C}
%		\justifies
%		\qquad B\To C\qquad\]
%			\dots
%				\[\leadsto\using\de_{A_n}
%				\justifies
%				\qquad B\To A_n\qquad\]
%		\justifies
%		\O C,\O A_1,\dots,\O A_n,\Pi\To\Sigma,\O B
%		\using LR\mbox{-}C
%		\]
%\justifies
%\g,\O A_1,\dots,\O A_n,\Pi\To\d,\Sigma,\O B
%\using Cut
%\end{prooftree}
%$$
 where $A_1,\dots,A_n$ (and thus also $\O A_1,\dots,\O A_n$)  may or may not be the empty multiset. If $A_1,\dots,A_n$ is not empty, we transform it  into the following derivation having one cut with cut formula of lesser weigh
 
$$
\infer[\infrule LR\mbox{-}C]{\g,\O A_1,\dots\O A_n,\Pi'\To\d,\Sigma', \O B}{
\infer[\infrule Cut]{A_1,\dots,A_n\To B}{\deduce{\To C\quad}{\vdots\:\de_{11}}&\deduce{C,A_1,\dots, A_n\To B}{\vdots\;\de_{21}}}&
 \deduce{B\To A_1}{\vdots \;\de_{A_1}}
 &{}^{\dots}& \deduce{B\To A_n}{\vdots \;\de_{A_n}}
 }
$$
If, instead, $A_1,\dots,A_n$ is empty, we transform it into

$$
\infer[\infrule R\mbox{-}N]{\g,\Pi'\To\d,\Sigma',\O B}{\infer[\infrule Cut]{\To B}{\deduce{\To C\qquad}{\vdots\:\de_{11}}&\deduce{C\To B}{\vdots\;\de_{21}}}}
$$

%------- 
\noindent \textbullet$\quad${\bf G3CD$^{\P}$.} Left premiss by $LR$-$C$ and right premiss by $L$-$D^{\P_C}$. We transform  (we assume $\Xi= A_1,\dots A_k$, $\Theta= C, B_2,\dots, B_m$ and $\Lambda= D_1,\dots, D_n$) 

$$
\infer[\infrule Cut ]{\O\Xi,\O B_1,\dots, \O B_m,\O \Lambda,\g',\Pi'\To\d,\Sigma}{
\infer[LR\mbox{-}C]{\O\Xi,\g'\To\d,\O C}{\deduce{\Xi\To C}{\vdots\;\de_{11}}&\deduce{\{ C\To A_i\,|\, A_i\in\Xi\}}{\vdots\;\de_{1A_i}}}&
\infer[\infrule L\mbox{-}D^{\P_C}]{\O C,\O B_2,\dots,\O B_m,\O\Lambda,\Pi'\To\Sigma}{
\deduce{\Theta,\Lambda\To}{\vdots\;\de_{21}}&
\deduce{\{ \To E,D_j\,|\, E\in\Theta\text{ and } D_j\in\ \Lambda\}}{\vdots\;\de_{\Theta_{i}\Lambda_j}}}}
$$
into the following derivation having $1+(k\times n)$ cuts on formulas of lesser weight

$$\scalebox{0.830}{
\infer[\infrule{ L\mbox{-}D^{\P_C}}]{\O\Xi,\O B_1,\dots, \O B_m,\O \Lambda,\g',\Pi'\To\d,\Sigma}{
\infer[\infrule Cut]{\Xi,B_2,\dots, B_m,\Lambda\To}{
\deduce{\Xi\To C}{\vdots\;\de_{11}}&\deduce{C,B_2,\dots,B_m,\Lambda\To}{\vdots\;\de_{21}}}
&
\infer[\infrule Cut]{\{\To A_i,D_j| A_i\in\Xi,\, D_j\in\Lambda\}}{\deduce{\To D_j,C}{\vdots\;\de_{\Theta_1,\Lambda_j}}&\deduce{C\To A_i}{\vdots\;\de_{1A_i}}}
&
\deduce{\{\To B_i,D_j| B_i\in\Theta-C,\, D_j\in \Lambda\}}{\vdots\;\de_{\Theta_i\Lambda_j}}
}
}$$
\noindent \textbullet$\quad${\bf G3C(N)D.}$\quad$ Left premiss by $LR$-$C$ and right one by $L$-$D^*$. It is straightforward to transform the derivation into another one having one cut with cut formula of lesser weight. \medskip

\noindent \textbullet$\quad${\bf G3R(D).}$\quad$ Both premisses are by rule $LR$-$R$, we transform

\noindent\scalebox{0.88000}{ $$
\infer[\infrule Cut]{\O A,\O\Xi,\g',\O\Psi,\Pi'\To\d,\Sigma',\O B}{\infer[\infrule LR\mbox{-}R]{\O A,\O\Xi,\g'\To\d,\O C}{\deduce{A,\Xi\To C}{\vdots\;\de_{11}}}&\infer[\infrule LR\mbox{-}R]{\O C,\O\Psi,\Pi'\To\Sigma',\O B}{\deduce{C,\Psi\To B}{\vdots\;\de_{21}}}}
\quad\mbox{into}\quad
\infer[\infrule LR\mbox{-}R]{\O A,\O\Xi,\g',\O\Psi,\Pi'\To\d,\Sigma',\O B}{\infer[\infrule Cut]{A,\Xi,\Psi\To B }{\deduce{A,\Xi\To C}{\vdots\;\de_{11}}&\deduce{C,\Psi\To B}{\vdots\,\de_{21}}}}
$$}
%$$\begin{prooftree}
%\[
%\[\leadsto\using\de_{11}\justifies A,\g'\To C\]
%\justifies
%\O A,\O \g',\g''\To\d,\O C
%\using LR\textrm{-}R\]
%		\[
%		\[\leadsto\using\de_{21}\justifies C,\Pi'\To B\]
%		\justifies
%		\O C,\O\Pi',\Pi''\To\Sigma,\O B
%		\using LR\textrm{-}R\]
%\justifies
%\O A,\O \g',\g'',\O\Pi',\Pi''\To\d,\Sigma,\O B
%\using \using Cut\qquad
%\end{prooftree}$$
% into
% $$
%\begin{prooftree}
%\[
%\[\leadsto\using\de_{11}\justifies A,\g'\To C\]
%		\[\leadsto\using\de_{21}\justifies \qquad C,\Pi'\To B\qquad\]
%\justifies
%A,\g',\Pi'\To B
%\using  Cut\]
%\justifies
%\O A,\O \g',\O\Pi',\g'',\Pi''\To\d,\Sigma,\O B
%\using \using LR\textrm{-}R
%\end{prooftree}
%$$
\medskip

\noindent\noindent \textbullet$\quad${\bf G3RD$^\star$.}$\quad$Left premiss is by $LR$-$R$, and  right one by $L$-$D^\star$, we transform %\footnote{ Even though the formula   $C^\star:=\O A\to\neg\O\neg A$ is derivable from $C:=\neg\O\bot$ in axiomatic systems for  logics at least as strong as {\bf RD}, the following transformation would not have been possible if we had taken $LD$ instead of $LD$ as a rule of {\bf G3RD}.  The same holds for the   case {\bf (5b)} of {\bf G3KD}. In this respect  our approach is not completely modular.}  we transform \vspace{0.3cm}

\noindent$$
\infer[\infrule Cut]{\O A,\O\Xi,\g',\O\Psi,\Pi'\To\d,\Sigma}{\infer[\infrule LR\mbox{-}R]{\O A,\O\Xi,\g'\To\d,\O C}{\deduce{A,\Xi\To C}{\vdots\;\de_{11}}}&\infer[\infrule L\mbox{-}D^*]{\O C,\O\Psi,\Pi'\To\Sigma}{\deduce{C,\Psi\To }{\vdots\;\de_{21}}}}
\quad\mbox{into}\quad
\infer[\infrule L\mbox{-}D^*]{\O A,\O\Xi,\g',\O\Psi,\Pi'\To\d,\Sigma}{\infer[\infrule Cut]{A,\Xi,\Psi\To  }{\deduce{A,\Xi\To C}{\vdots\;\de_{11}}&\deduce{C,\Psi\To }{\vdots\,\de_{21}}}}
$$ 
%\noindent \begin{tabular}{lll}
%\begin{prooftree}
%\[
%\[\leadsto\using\de_{11}\justifies A,\g'\To C\]
%\justifies
%\O A,\O \g',\g''\To\d,\O C
%\using LR\textrm{-}R\]
%		\[
%		\[\leadsto\using\de_{21}\justifies C,\Pi'\To \]
%		\justifies
%		\O C,\O \Pi',\Pi''\To\Sigma
%		\using LD^\star\]
%\justifies
%\O A,\O \g',\g'',\O\Pi',\Pi''\To\d,\Sigma
%\using Cut
%\end{prooftree}& $\;$ into $\;$&
%\begin{prooftree}
%\[
%\[\leadsto\using\de_{11}\justifies A,\g'\To C\]
%	\[\leadsto\using\de_{21}\justifies\qquad C,\Pi'\To\qquad \]
%\justifies
%A,\g',\Pi'\To
%\using Cut\]
%\justifies
%\O A,\O \g',\O\Pi',\g'',\Pi''\To\d,\Sigma
%\using LD^\star
%\end{prooftree}
%\end{tabular}\vspace{0.3cm}

%\noindent {\bf We stress that, even though the formula   $D^\P:=\neg(\O A\wedge\O\neg A)$ is derivable from $D^\bot:=\neg\O\bot$ in axiomatic systems for  logics at least as strong as {\bf RD}, the given transformation would not have been possible if we had taken $LD$ instead of $LD$ as a rule of {\bf G3RD}.  The same holds for  {\bf G3KD}. In this respect  our approach is not completely modular.}

\noindent \textbullet$\quad${\bf G3K(D).}$\quad$
The new cases with respect to {\bf G3R(D)} are those with left premiss by an instance of $LR$-$K$ that has no principal formula in the antecedent. These cases can be treated like cases with left premiss by $R$-$N$.\end{proof}

%\begin{proposition}\label{nonCut} The rule of Cut is not admissible in the calculus obtained by extending  {\bf G3C} with rule $L$-$D^\P$ ({\bf G3CD$^\P$}).
%\end{proposition}
%
%\begin{proof}
%The sequent $ \O A,\O B\To\neg\O\neg(A\wedge B)$, which represents a theorem of {\bf CD$^\P$}, is derivable in  {\bf G3CD$^\P$}+$\{$\emph{Cut}$\}$, but, as the reader might easily check, it is not derivable in {\bf G3CD}$^\P$.
%
%$$
%\infer[\infrule R\neg]{\O A,\O B\To\neg\O\neg(A\wedge B)}{
%\infer[\infrule Cut]{\O A,\O B,\O\neg(A\wedge B)\To}{
%\infer[\infrule LR\mbox{-}C]{\O A,\O B\To \O(A\wedge B)}{\vdots}
%&
%\infer[\infrule L\mbox{-}\P]{\O(A\wedge B),\O\neg(A\wedge B)\To}{
%\infer[\infrule L\neg]{A\wedge B,\neg(A\wedge B)\To}{A\wedge B\To A\wedge B}
%&
%\infer[\infrule R\neg]{\To A\wedge B,\neg(A\wedge B)}{A\wedge B\To A\wedge B}}}}
%$$
%\end{proof}

%
\section{Decidability and syntactic completeness}\label{secdecax}
\subsection{Decision procedure for {\bf G3X}}\label{decision}
Each calculus {\bf G3X} has the strong subformula property since all  active formulas of  each rule  in Tables \ref{G3cp} and \ref{Modal rules} are  proper subformulas of the the principal formulas and no formula  disappears in moving from premiss(es) to conclusion. As usual, this gives us a syntactic proof of consistency.
\begin{proposition}\label{cons}\
\begin{enumerate}
\item Each  premiss of each rule of {\bf G3X} has smaller weight than its conclusion;
\item Each premiss of each modal or deontic rule of {\bf G3X} has smaller modal depth than its conclusion;
\item The calculus {\bf G3X} has the subformula property:  a {\bf G3X}-derivation of a sequent $\mathcal{S}$ contains only sequents composed of subformulas of $\mathcal{S}$;
\item The empty sequent is not {\bf G3X}-derivable.
\end{enumerate}\end{proposition}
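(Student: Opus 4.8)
The plan is to prove each of the four claims in turn, all by straightforward inspection of the rules in Tables \ref{G3cp} and \ref{Modal rules}, with items (2)--(4) following from item (1) plus the structure of the rules.

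First I would establish item (1). The claim is that every premiss of every rule has strictly smaller weight than its conclusion; since the weight of a sequent is the sum of the weights of its formulas, I would check this rule by rule. For the propositional rules of {\bf G3cp} this is immediate: in each rule the principal formula $A\circ B$ of the conclusion is replaced in the premiss(es) by its immediate subformulas $A$ and/or $B$, and by the definition $w(A\circ B)=w(A)+w(B)+1$ the premiss loses at least the $+1$ contributed by the connective (the weakening contexts $\g,\d$ are shared). For the modal and deontic rules one argues identically: each principal formula $\O A$ in the conclusion is replaced in a premiss by the unmodalised formula $A$, and $w(\O A)=w(A)+1$, so every modalised principal formula drops its weight by one, while the shared context $\g,\d$ and the weakening contexts contribute equally to premiss and conclusion. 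The only point requiring a little care is that in rules such as $LR$-$C$, $LR$-$R$, and the deontic $L$-$D^{\P_C}$ the premisses redistribute several active formulas, but since each such active formula is an unmodalised subformula of a principal $\O$-formula, and no new connective is introduced, each premiss is still strictly lighter than the conclusion. I expect this to be routine; the main obstacle is merely being exhaustive across the many rules.

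Next I would derive item (2). Here I restrict attention to the modal and deontic rules and claim each premiss has smaller modal depth than its conclusion. This follows because in each such rule at least one box is stripped: the active formulas of a premiss are obtained by deleting the outermost $\O$ from one or more principal formulas, and no active formula lies under more boxes than it did in the conclusion. Concretely, if the conclusion has modal depth $d$ witnessed by some principal $\O$-formula, then removing that outermost $\O$ in the premiss strictly lowers the nesting contributed by that formula, and since all other formulas in the premiss come from the shared contexts or from similarly unboxed principal formulas, the maximal nesting strictly decreases.

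Then item (3), the subformula property, follows by a trivial induction on the height of a {\bf G3X}-derivation, using the observation (already noted in the paragraph preceding the proposition) that every active formula of every rule is a proper subformula of a principal formula and that contexts are carried unchanged from premiss to conclusion. Hence every sequent occurring in a derivation of $\mathcal{S}$ is built only from subformulas of $\mathcal{S}$. Finally, item (4) is immediate from item (1): the empty sequent $\To$ has weight $0$, but item (1) shows that in any rule application the premisses have strictly smaller weight than the conclusion, so the empty sequent cannot be the conclusion of any rule, nor is it an initial sequent or an instance of $L\bot$ (both of which contain at least one formula). Since a derivation must have the root as the conclusion of some rule or as a leaf of one of these forms, the empty sequent is not derivable. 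The whole proof is thus a sequence of easy inspections, and I anticipate no genuine difficulty beyond the bookkeeping of covering every rule in Table \ref{Modal rules}.
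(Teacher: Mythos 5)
Your overall route --- rule-by-rule inspection for items (1) and (2), induction on derivation height for (3), and deriving (4) from (1) plus the shape of initial sequents and $L\bot$ --- is the same as the paper's, which states the proposition as an immediate consequence of inspecting Tables \ref{G3cp} and \ref{Modal rules}. However, your inspection misdescribes the modal and deontic rules in a way that makes your argument for item (2) unsound. You assert that in these rules ``the shared context $\g,\d$ and the weakening contexts contribute equally to premiss and conclusion'' and that the remaining formulas of a premiss ``come from the shared contexts''. In fact, in every rule of Table \ref{Modal rules} the multisets $\g,\d$ are weakening contexts occurring in the \emph{conclusion only}: the premisses consist exclusively of the unboxed active formulas (e.g.\ $LR$-$E$ has premisses $A\To B$ and $B\To A$, with no occurrence of $\g$ or $\d$). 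This is precisely the feature of these calculi that the paper stresses (``the presence of empty contexts in the premiss(es) of the modal ones'').

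For item (1) your misreading is harmless: dropping the contexts makes the premisses even lighter, so the strict weight decrease holds a fortiori. For item (2) it is fatal. If the contexts really were copied into the premisses, claim (2) would be false: a context formula of maximal modal depth (say $\O\O\O q$ in $\g$, against principal formulas of depth $1$) would reappear untouched in the premiss, whose modal depth would then equal that of the conclusion. Relatedly, your argument only treats the case in which the modal depth of the conclusion ``is witnessed by some principal $\O$-formula'', which need not hold. The correct and complete argument is the one your misreading obscures: since a premiss of a modal or deontic rule contains \emph{only} active formulas, and each active formula is the unboxing of a principal formula of the conclusion, the modal depth of the premiss is at most the maximal depth of the principal formulas minus one, hence strictly smaller than the modal depth of the conclusion, whatever the contexts contain. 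Items (3) and (4) are fine as you state them (for (3), the induction works whether contexts are copied, as in the propositional rules, or discarded, as in the modal ones, since in both cases every premiss formula is a subformula of some conclusion formula).
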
 
We also have an effective method to decide the derivability of a sequent in {\bf G3X}: we start from the desired sequent $\g\To\d$ and we construct all possible {\bf G3X}-derivation trees until either we find a tree where each leaf is an initial sequent or a conclusion of $L\bot$ -- we have found a {\bf G3X}-derivation of $\g\To\d$ -- or we have checked all possible {\bf G3X}-derivations   and we have found none --  $\g\To\d$ is not {\bf G3X}-derivable.  %Given that the modal rules are not invertible, in the  decision procedure we need back-tracking when they are applied (or we can apply all possible instances of a modal rule in parallel). 

More in details, we present  a depth-first procedure that tests {\bf G3X}-derivability in polynomial space. As it is usual in decision procedures involving non-invertible rules, we have trees involving two kinds of branching. Application of a rule with more than one premiss produce an \emph{AND-branching} point, where all branches have to be derivable to obtain a derivation. Application of a non-invertible rule to a sequent that can be the conclusion of different instances of non-invertible rules produces an \emph{OR-branching} point, where only one branch need be derivable to obtain a derivation.
In the procedure below we will assume that, given a calculus {\bf G3X} and given a sequent $\O\Pi,\g^p\To\d^p,\O\Sigma$ (where $\g^p$ and $\d^p$ are multisets of propositional variables), there is some fixed way of ordering the finite (see below) set of instances of modal and deontic rules of {\bf G3X} (\emph{{\bf X}-instances}, for shortness) having that sequent as conclusion. Moreover, we will represent the root of branches above an OR-branching point  by nodes of shape $\Box_i$, where $\Box_i$ is the name of the $i$-th {\bf X}-instance applied (in the order of  all {\bf X}-instances having that conclusion).  To illustrate, if we are in {\bf G3EN} and we have to consider  $\O A,\O B,\g^p\To\d^p,\O C$ then we obtain (fixing one way of ordering the three {\bf X}-instances having that sequent as conclusion):
 
$$
\infer{\O A,\O B,\g^p\To\d^p,\O C}{
\infer[\qquad]{LR\mbox{-}E}{A\To C& C\To A}&
\infer[\qquad]{LR\mbox{-}E}{B\To C&C\To B}&
\infer{R\mbox{-}N}{\To B}}
$$
where the lowermost sequent is an  OR-branching point  and the two nodes   $LR$-$E_1$ and $LR$-$E_2$ are AND-branching points. Finally, Given an AND(OR)-branching point 

$$\infer{\mathcal{S}}{\mathcal{S}_1&\dots&\mathcal{S}_n}$$ we say that the branch above $\mathcal{S}_i$ is an  \emph{unexplored AND(OR)-branch} if no one  of its nodes has already been active. %In general, the number of conjunctively branching branches above a node is a finite number that depends on the rule applied (and the number of boxed formulas in the antecedent for rules $LR$-$C$ and $L$-$D^{\P_C}$); the number of disjunctively branching branches above a node is a finite number that depends on which modal and deontic rules of the calculus under consideration and the number of boxed formulas occurring in the antecedent and the number of boxed formulas occurring in the succedent.

\begin{definition}[{\bf G3X}-decision procedure]\label{decisiontree}\
\begin{description}
\item[Stage 1.] We write the one node tree $
\g\To\d$
and we label $\g\To\d$ as active.
\item[Stage n+1.] Let $\mathcal{T}_n$ be the tree constructed at stage $n$, let $\mathcal{S}\equiv\Pi\To\Sigma$ be its active sequent, and let $\mathcal{B}$ be the branch  going from   the root  of $\mathcal{T}_n$ to $\mathcal{S}$. 
\begin{description}
\item[Closed.] If $\mathcal{S}$ is such that $p\in\Pi\cap\Sigma$ (for some propositional variable $p$) or $\bot\in\Pi$, then we label $\mathcal{S}$ as closed and
\begin{description}
\item[Derivable.] If $\mathcal{B}$ contains no  unexplored AND-branch, the procedure ends and  \mbox{$\g\To\d$} is {\bf G3X}-derivable;
\item[AND-backtrack.]If, instead, $\mathcal{B}$ contains  unexplored AND-branches, we choose the topmost one and we label as active its leftmost unexplored leaf. {\bf Else}
\end{description}
\item[Propositional.] if $\mathcal{S}$ can be the conclusion of some instances $\circ_1,\dots,\circ_m$ of the invertible propositional rules, we extend  $\mathcal{B}$ by applying  one of such instances: 

$$\infer[\infrule{\circ_i\quad 1\leq i\leq m}]{\mathcal{S}}{\mathcal{S}_1&(\mathcal{S}_2)}$$ \noindent where, if  $\mathcal{S}_2$, if present,  $\mathcal{S}$ is an AND-branching point. {\bf Else}
\item[Modal.] If $\mathcal{S}$ can be the conclusion of the following  canonically ordered list of   {\bf X}-instances:

$$
\infer[\infrule \Box_1]{\mathcal{S}}{\mathcal{S}_1^1&\dots&\mathcal{S}^1_k }
\qquad\deduce[\dots]{\phantom{a}}{}\qquad
\infer[\infrule \Box_m]{\mathcal{S}}{\mathcal{S}_1^m&\dots&\mathcal{S}^m_l }
$$ 
then we extend  $\mathcal{B}$ as follows:

$$
\infer{\mathcal{S}}{\infer{\Box_1\phantom{^1}}{\mathcal{S}_1^1&\dots&\mathcal{S}^1_k }&\deduce[\dots]{\phantom{a}}{}&\infer{\Box_m\phantom{^1}}{\mathcal{S}_1^m&\dots&\mathcal{S}^m_l}
}
$$
where, if $m\geq 2$,  $\mathcal{S}$ is  OR-branching  and, if $\Box_i$ is a rule with more than one premiss, $\Box_i$ is AND-branching. Moreover, we label $\mathcal{S}_1^1$ as active. {\bf Else} 
\item[Open.] No rule of {\bf G3x} can be applied to $\mathcal{S}$, then  we label $\mathcal{S}$ as open and
\begin{description}
\item[Underivable.] If $\mathcal{B}$ contains no unexplored  OR-branch, the procedure ends and  \mbox{$\g\To\d$} is not  {\bf G3X}-derivable;
\item[OR-backtrack.]If, instead, $\mathcal{B}$ contains unexplored OR-branches, we choose the topmost one and we label as active its leftmost unexplored leaf. 
\end{description}
\end{description}
\end{description}
\end{definition}
 
Termination  can be shown as follows. Proposition \ref{cons}.1 entails that the height of each branch of the tree $\mathcal{T}$  constructed in a {\bf G3X}-decision procedure for a sequent $\g\To\d$  is bounded by the weight of  $\g\To\d$  (in particular, given Proposition \ref{cons}.2, the number of OR-branching points occurring in a branch is bounded by the modal depth of $\g\To\d$). Moreover, $\mathcal{T}$ is  finitary branching  since all rules of {\bf G3X} are finitary branching rules, and since each sequent can be the conclusion of a finite number  $k$ of  {\bf X}-instances (for each {\bf G3X} $k$ is bounded by a function of $|\g|$ and $|\d|$)}.   Hence, after a finite number of stages we are either in case {\bf Derivable} or in case {\bf Underivable} and, in both cases, the procedure ends. In the first case we can easily extract a {\bf G3X}-derivation of $\g\To\d$ from $\mathcal{T}$ (we just have to delete  all unexplored branches as well as all underivable sub-trees   above an OR-branching point). In the latter case, thanks to Proposition \ref{cons}.3, we know that (modulo the order of the invertible propositional rules)  we have explored the whole search space for a {\bf G3X}-derivation of $\g\To\d$ and we have found none. 

% For the logics {\bf G3R} and {\bf G3K}, as well as for their deontic extensions, we can reduce the number of {\bf X}-instances  that we have to consider at OR-branching points as a consequence of the hp-admissibility of weakening.
%\begin{proposition}\label{reduce} Suppose {\bf G3X} is {\bf G3R} or {\bf G3K}. When it is active a sequent of shape $\qquad
%\O\Pi,\g^p\To\d^p,\O\Sigma\qquad$ then
%\begin{itemize}
%\item If $\Pi\neq \emptyset$ and $\Sigma\neq \emptyset$, it is enough to consider just one instance of the modal rule of {\bf G3X} for each member of $\Sigma$ (having all members of $\Pi$ as active formulas in the antecedent);
%\item If $\Pi=\emptyset$ and $R$-$N$ is admissible in {\bf G3X}, it is enough to consider just one instance of the modal rule of {\bf G3X} $R$-$N$ for each member of $\Sigma$; 
%\item If $\Sigma=\emptyset$ and  $L$-$D^*$ ($L$-$D^{\P_M}$ or $L$-$D^\bot$) it is enough to consider one instance of this deontic rule (for each pair of formulas or for each formula in $\Pi$). 
%\end{itemize}
%\end{proposition}
%\begin{proof}It is an immediate consequence of the admissibility of weakening since, in each one of the three cases, the premiss of all missing {\bf X}-instances can be transformed into one of those considered by applying some instance of weakening.
%\end{proof}

%--- COMPLEXITY
We  prove that it is possible to test {\bf G3X}-derivability in polynomial space by showing how it is possible to store only the active node together with a stack containing  information sufficient to reconstruct unexplored branches. For the propositional part of the calculi, we proceed as in \cite{B97,H93,H95}: each entry of the stack is a triple containing the name of the rule applied, an index recording which of its premisses  is active, and its principal formula. For the  {\bf X}-instances two complications arise: we need to record which OR-branches are unexplored yet, and we have to keep track of the weakening contexts of the conclusion in the premisses of {\bf X}-instances. The first problem has already been  solved by having assumed that the {\bf X}-instances applicable to a given sequent have a fixed canonical order.  The second problem is solved by adding a numerical superscript to the formulas occurring in a sequent and by imposing that:\\
- All formulas in the end-sequent have 1 as superscript;\\
 - The superscript $k$ of the principal formulas of rules and of initial sequents are maximal in that sequent;\\- Active formulas of {\bf X}-instances (propositional rules) have $k+2$ ($k$, respectively) as superscript;\\-  
  Contexts are copied in the premisses of each rule.\\ By doing so, the contexts of the conclusion are copied in the premisses in each rule of {\bf G3X},  but they cannot be principal in the trees above the premisses of the  {\bf X}-instances because their superscript is never maximal therein.  It is immediate to see the the superscripts occurring in a derivation are bounded by (twice) the modal depth of the end-sequent.
  
  Instances of all modal and deontic rules in Table \ref{Modal rules} but $LR$-$C$ and $L$-$D^{\P_C}$ are such that there is no need to  record their principal formulas in the stack entry: they are the boxed version of the formulas  having maximal superscript in the active premiss; moreover, the name of the rule and the number of the premiss  allow to reconstruct the position of the principal formulas (for the right premiss of  $LR$-$E$ and $L$-$D^{\P_E}$, we have to switch the two formulas). In instances of rules $LR$-$C$ and $L$-$D^{\P_C}$, instead,  this  doesn't hold since in all premisses but the leftmost one  there is no subformula of some principal formulas. We can overcome this problem by copying in each premiss all principal formulas having no  active subformula in that premiss and by adding one to  their superscript. We also keep fixed the position of all formulas (modulo the swapping of the two active formulas). To illustrate, one such instance is:
  
  $$ \scalebox{0.9}{\infer[\infrule LR\mbox{-}C]{\Box A^k_1,\Box A^k_2,\g\To\d,\O B^k}{A^{k+2}_1,A^{k+2}_2,\g\To\d, B^{k+2}\quad&
 B^{k+2},\Box A^{k+1}_2,\g\To\d, A_1^{k+2}
 \quad&
 \Box A^{k+1}_1,B^{k+2},\g\To\d, A_2^{k+2}
 }
}$$
 In this way, given  the name of the modal or deontic rule applied, any premiss of this rule instance, and its position among the premisses of this rule, we can reconstruct both the conclusion of this rule instance and  its position  in the fixed order of {\bf X}-instances concluding that sequent (thus we know which OR-branches are unexplored yet). In doing so, we use the hp-admissibility of contraction to ensure that no formula has more than one occurrence in the antecedent or in the succedent of the conclusion of {\bf X}-instances (otherwise we might be unable to reconstruct which of two identical {\bf X}-instances we are considering). Hence, for {\bf X}-instances each stack entry records the name of the rule applied and an index recording which premiss we are considering.

The decision procedure is like in Definition \ref{decisiontree}. The only novelty is that at each stage, instead of storing the full tree constructed so far, we store only the active node and the stack, we push an entry  in the stack and, if we are in a backtracking case, we pop stack entries (and we  use them  to reconstruct the corresponding active sequent) until we reach an entry recording unexplored branches of the appropriate kind, if any occurs. 

\begin{theorem}
{\bf G3X}-derivability is decidable in $\mathcal{O}(n\, \log{} n)$-{\sc space}, where $n$ is the weight of the end-sequent.
\end{theorem}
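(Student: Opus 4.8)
The plan is to prove the space bound by carefully analysing the depth-first decision procedure of Definition \ref{decisiontree} together with the stack-based implementation just described, and to show that the \emph{space} consumed at any point during the search is $\mathcal{O}(n\log n)$, where $n=w(\g\To\d)$. The key observation is that, by the stack implementation, at any stage we store only the currently active sequent together with one stack, so it suffices to bound (i) the size of any sequent that can appear in the search tree, and (ii) the total size of the stack.

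First I would bound the size of any sequent appearing in a branch. By Proposition \ref{cons}.3 every formula occurring in the search is a subformula of $\g\To\d$, so there are at most $n$ distinct formulas available, each of weight at most $n$. By the hp-admissibility of contraction (Theorem \ref{contr}), which the procedure exploits to keep every antecedent and succedent duplicate-free in the premisses of {\bf X}-instances, each side of an active sequent contains at most $n$ formula occurrences. Each such occurrence is recorded as a subformula (an index into the fixed list of subformulas of the end-sequent, using $\mathcal{O}(\log n)$ bits) together with its numerical superscript; by the remark that superscripts are bounded by twice the modal depth of the end-sequent, hence by $\mathcal{O}(n)$, each superscript also costs $\mathcal{O}(\log n)$ bits. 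Thus a single active sequent is stored in $\mathcal{O}(n\log n)$ space.

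Next I would bound the stack. By Proposition \ref{cons}.1 each premiss has strictly smaller weight than its conclusion, so the height of any branch of $\mathcal{T}$ is at most $n$; since the stack records one entry per rule application along the current branch, it has at most $n$ entries. Each entry stores the name of the rule applied (a constant number of possibilities, so $\mathcal{O}(1)$ bits) and an index recording which premiss is active ($\mathcal{O}(\log n)$ bits); for non-standard rules we observed that no principal formula need be stored beyond what the premiss and index already determine. Hence each stack entry costs $\mathcal{O}(\log n)$ space and the whole stack costs $\mathcal{O}(n\log n)$. Adding the active sequent gives the claimed $\mathcal{O}(n\log n)$ bound, and termination from Proposition \ref{cons} guarantees the procedure halts.

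The main obstacle I expect is the accounting for the non-standard rules $LR$-$C$ and $L$-$D^{\P_C}$, whose number of premisses is not fixed but grows with the antecedent. Here I must verify that copying the inactive principal formulas into each premiss (with incremented superscript) does not blow up either the sequent size or the stack: since the copied formulas are still subformulas of the end-sequent and contraction keeps them distinct, the per-sequent bound of $\mathcal{O}(n\log n)$ is preserved, and the index identifying \emph{which} premiss we are exploring still fits in $\mathcal{O}(\log n)$ bits because the number of premisses of any single instance is at most $n+1$. The delicate point is confirming that, from a stored premiss plus rule-name plus premiss-index alone, one can reconstruct the conclusion and its canonical position among the {\bf X}-instances, so that backtracking correctly identifies the remaining unexplored OR-branches without ever needing to hold more than the current branch in memory; this reconstructability was exactly the purpose of the superscript discipline and the fixed canonical ordering, so the argument reduces to checking those bookkeeping invariants rather than to any genuinely new combinatorial estimate.
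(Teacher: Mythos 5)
Your proposal is correct and follows essentially the same route as the paper's proof: stack depth $\mathcal{O}(n)$ from Proposition \ref{cons}.1 with $\mathcal{O}(\log n)$ per entry, plus an active sequent of $\mathcal{O}(n)$ subformula indexes and superscripts each costing $\mathcal{O}(\log n)$, giving $\mathcal{O}(n\log n)$ overall. The bookkeeping concerns you flag for $LR$-$C$ and $L$-$D^{\Diamond_C}$ (superscript discipline, canonical ordering, reconstructability on backtracking) are exactly the invariants the paper establishes in the discussion preceding the theorem, so your argument matches the intended one.
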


\begin{proof}
We have already argued that proof search terminates.
As   in \cite{B97,H93,H95}, Proposition \ref{cons}.1 entails that the  stack depth is bounded by $\mathcal{O}(n)$ and, by storing the principal formulas of propositional rules as indexes into the end-sequent,  each entry requires $\mathcal{O}(\log{}n)$ space. Hence we have an $\mathcal{O}(n\, \log{} n)$  space bound for the stack. Moreover, the active sequent contains at most $\mathcal{O}(n)$ subformulas of the end-sequent and their numerical superscripts. Each such subformula requires $\mathcal{O}(\log{}n)$ space since it can be recorded as an index into the end-sequent;  its numerical superscript requires $\mathcal{O}(\log{}n)$ too since there are at most $\mathcal{O}(n)$ superscripts. Hence also the active sequent requires $\mathcal{O}(n\log{}n)$ space.%Analogously, the active sequent requires $\mathcal{O}(n\,\log{}n)$ space since it may contain at most $\mathcal{O}(n)$ subformulas of the end-sequent and each of them, if it is represented as an index into the end-sequent, requires $\mathcal{O}(\log{}n)$ space (also its superscript requires $\mathcal{O}(\log{}n)$ space since there are at most $\mathcal{O}(n)$ superscripts). 
\end{proof}
%
%Observe that this upper bound  is  optimal for the  logics  having $C$ (the satisfiability problem for non-normal modal logics without $C$ is in    is in  {\sc NP}, see \cite{V89}).

\subsection{Equivalence with the axiomatic systems}
It is now time to show that the sequent calculi introduced are equivalent to the non-normal logics of Section \ref{secaxiom}.  We write {\bf G3X} $\vdash \g\To\d$ if the sequent $\g\To\d$ is  derivable in {\bf G3X}, and we say that $A$ is derivable in {\bf G3X} whenever {\bf G3X} $\vdash \;\To A$. We begin by proving the following 

\begin{lemma}\label{ax} All the axioms of the  axiomatic system {\bf X} are derivable in {\bf G3X}.\end{lemma}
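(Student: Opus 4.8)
The plan is to prove that every axiom of each system \textbf{X} has a \textbf{G3X}-derivation of the sequent $\To A$, proceeding axiom-by-axiom according to Table~\ref{axioms}, plus the propositional tautologies. The propositional tautologies are handled uniformly: since \textbf{G3cp} is complete for classical propositional logic and is contained in every \textbf{G3X}, any propositional tautology $A$ is derivable via $\To A$ using only the rules of Table~\ref{G3cp}, and by Proposition~\ref{genax} we may freely use initial sequents with arbitrary principal formulas. So the real work is confined to the finitely many modal and deontic axioms, and for each I would exhibit a concrete derivation whose final modal/deontic step is the rule that the table of rules associates with that axiom.

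For the modal axioms I would give one derivation per axiom, in each case reducing root-first with $R\to$, $R\wedge$, $L\wedge$, $L\to$ (all invertible, by Lemma~\ref{inv}) until I reach a sequent of the purely modal shape to which the characteristic rule applies. For axiom $M$, $\O(A\wedge B)\to(\O A\wedge\O B)$, I would peel off $R\to$ and $R\wedge$ to reduce to the two sequents $\O(A\wedge B)\To\O A$ and $\O(A\wedge B)\To\O B$, each closed by $LR$-$M$ from $A\wedge B\To A$ and $A\wedge B\To B$ respectively (derivable by $L\wedge$ and Proposition~\ref{genax}). For axiom $C$, $(\O A\wedge\O B)\to\O(A\wedge B)$, I would reduce to $\O A,\O B\To\O(A\wedge B)$ and close it with $LR$-$C$ (instance with $n=2$), whose premisses $A,B\To A\wedge B$, $A\wedge B\To A$, $A\wedge B\To B$ are all immediate. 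For axiom $N$, $\O\top$, the single step $R$-$N$ from $\To\top$ suffices, where $\To\top$ abbreviates $\To\bot\to\bot$ and is derived by $R\to$ followed by Proposition~\ref{genax}. The deontic axioms are analogous: $D^\bot=\neg\O\bot$ reduces via $R\neg$ (admissible, by the Lemma preceding Theorem~\ref{contr}) to $\O\bot\To$, closed by $L$-$D^\bot$ from $\bot\To$ (an instance of $L\bot$); and $D^\P=\O A\to\P A$, where $\P A$ unfolds to $\neg\O\neg A$, reduces to $\O A,\O\neg A\To$, which is exactly an instance of $L$-$D^{\P_E}$ (with $|\Pi|=2$) whose premisses $A,\neg A\To$ and $\To A,\neg A$ are derivable using $L\neg$, $R\neg$ and Proposition~\ref{genax}.

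The one subtlety I would flag is that the characteristic rule named in Tables~\ref{modcalculi} and~\ref{deoncalculi} as a genuine rule of a given \textbf{G3X} need not be the only rule available in the stronger calculi; since each stronger calculus contains (or admits, marked $\star$) the weaker rule, the same derivation template transfers upward, and I would remark that for the marked-$\star$ cases one is entitled to use those rules because the admissibility claims are exactly what the structural theorems of Section~\ref{seccalculi} establish. The main obstacle is therefore not conceptual but bookkeeping: I must make sure that for each of the several deontic calculi the axiom is derived using a rule actually present in (or admissible for) that specific calculus, and in particular that the derivations respect the $|\Pi|\leq 2$ side conditions on the $L$-$D^{\P_E}$ and $L$-$D^{\P_M}$ rules. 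Since every axiom in Table~\ref{axioms} is a rank-1 formula whose modal subformulas are $\O$ applied to purely propositional arguments, the reduction always terminates at a sequent matching one characteristic rule instance, so no genuine difficulty arises beyond verifying these side conditions case by case.
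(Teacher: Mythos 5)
Your proposal is correct and takes essentially the same approach as the paper: the paper's proof is exactly a straightforward root-first application of the rules of the appropriate calculus together with Proposition~\ref{genax}, and its two worked examples ($D^\bot$ derived via $R\neg$ and $L$-$D^\bot$, and axiom $C$ derived via $R\to$, $L\wedge$ and $LR$-$C$) coincide with the derivations you give for those axioms. One small correction: the $\star$-marked admissibility claims you invoke for transferring derivations to the stronger calculi are not established by the structural theorems of Section~\ref{seccalculi} (those concern weakening, contraction and cut); they hold for the more elementary reason that each weaker rule is an instance of, or obtainable by discarding premisses from, a rule actually present in the stronger calculus --- which is exactly the direct case-by-case bookkeeping you describe anyway, so correctness is unaffected.
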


\begin{proof} A straightforward application of the rules of the appropriate sequent calculus, possibly using Proposition \ref{genax}.  As an example, we show that the deontic axiom $D^\bot$ is derivable by means of rule $L$-$D^\bot$ and that axiom  $C$ is derivable by means of  $LR$-$C$.

$$
\infer[\infrule R\neg]{\To\neg\O\bot}{\infer[\infrule L\mbox{-}D^\bot]{\O\bot\To}{\infer[\infrule L\bot]{\bot\To}{}}}
%\begin{prooftree}
%\[\[
%{}
%\justifies
%\bot\To
%\using L\bot\qquad\]
%\justifies
%\O\bot\To
%\using LD\]
%\justifies
%\To \neg\O\bot
%\using R\neg
%\end{prooftree}
\qquad
\infer[\infrule R\to]{\To \O A\wedge\O B\to\O(A\wedge B)}{\infer[\infrule L\wedge]{\O A\wedge \O B\To \O (A\wedge B)}{\infer[\infrule LR\mbox{-}C]{\O A,\O B\To \O(A\wedge B)}{\infer[\infrule R\wedge]{A,B\To A\wedge B}{\infer[\infrule{\ref{genax}}]{A,B\To A}{}&\infer[\infrule{\ref{genax}}]{A,B\To B}{}}&
\infer[\infrule L\wedge  ]{A\wedge B\To A}{\infer[\infrule{\ref{genax}}]{A,B\To A}{}}&
\infer[\infrule L\wedge]{A\wedge B\To B}{\infer[\infrule{\ref{genax}}]{A,B\To B}{}}}}}
%\begin{prooftree}
%\[\[
%\[
%\[
%\justifies
%A,B\To A
%\using\ref{genax}\]
%\[
%\justifies
%A,B\To B
%\using\ref{genax}\]
%\justifies
%A,B\To A\wedge B
%\using R\wedge\]
%			\[\[
%			\justifies
%			A,B\To A
%			\using\ref{genax}\]
%			\justifies
%			A\wedge B\To A
%			\using L\wedge
%			\]
%					\[
%				\[\justifies
%				A,B\To B
%				\using\ref{genax}\]
%				\justifies
%				A\wedge B\To B
%				\using L\wedge
%				\]
%\justifies
%\O A,\O B\To\O(A\wedge B)
%\using LR\mbox{-}C\]
%\justifies
%\O A\wedge\O B\To\O(A\wedge B)
%\using L\wedge\]
%\justifies\To\O A\wedge\O B\rightarrow\O(A\wedge B)
%\using R\to
%\end{prooftree}
$$

\end{proof}

Next we prove  the equivalence of the sequent calculi for non-normal logics with the corresponding axiomatic systems in the sense that  a sequent $\g\To\d$ is derivable in {\bf G3X} if and only if  its characteristic formula $\bigwedge\g\to\bigvee \d$ is derivable in {\bf X} (where the empty antecedent stands for $\top$ and the empty succedent for $\bot$).  As a consequence  each calculus is sound and complete with respect to the appropriate class of neighbourhood models (see  Section \ref{semantics}).

\begin{theorem}\label{comp} Derivability in  the sequent system {\bf G3X} and in the axiomatic system {\bf X} are equivalent, i.e.

\begin{center}
{\bf G3X} $\vdash\;\g\To\d \qquad$iff$\qquad${\bf X} $\vdash\bigwedge\g\to\bigvee\d$
\end{center}\end{theorem}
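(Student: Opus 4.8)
The plan is to prove the equivalence by establishing the two directions separately, each by induction on the relevant notion of derivation. For the direction from right to left (\textbf{X} $\vdash\bigwedge\g\to\bigvee\d$ implies \textbf{G3X} $\vdash\g\To\d$), I would first use Lemma \ref{ax}, which guarantees that every axiom of \textbf{X} is already derivable as a sequent in \textbf{G3X}. It then suffices to show that the inference rules of the axiomatic system are admissible in \textbf{G3X}. Since cut is admissible by Theorem \ref{cut}, modus ponens is simulated by cutting the derivation of $\To A$ against the derivation of $\To A\to B$ (after applying $L\to$ and using Proposition \ref{genax} to get $A\To A$ and $B\To B$). For the congruence/monotonicity rules $RE$, $RM$, $RR$, $RK$ that define the various logics, I would show that each is admissible in the corresponding calculus by a short derivation: for instance, from a \textbf{G3X}-derivation of the sequent corresponding to $A\leftrightarrow B$ one extracts derivations of $A\To B$ and $B\To A$ (via invertibility, Lemma \ref{inv}, applied to the conjunction and the implications), and then a single application of $LR$-$E$ yields $\O A\To\O B$ and $\O B\To\O A$, whence $\O A\leftrightarrow\O B$ follows by propositional reasoning. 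The rule-based axiomatizations of \textbf{M}, \textbf{R}, \textbf{K} noted after Proposition in Section \ref{secaxiom} are precisely what make this step clean, since $RM$, $RR$, $RK$ match $LR$-$M$, $LR$-$R$, $LR$-$K$ directly.

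For the direction from left to right (\textbf{G3X} $\vdash\g\To\d$ implies \textbf{X} $\vdash\bigwedge\g\to\bigvee\d$), I would argue by induction on the height of the \textbf{G3X}-derivation, showing that each rule of \textbf{G3X} preserves axiomatic derivability of the characteristic formula. The propositional rules of \textbf{G3cp} are handled exactly as in the classical case, since \textbf{X} contains all propositional tautologies and is closed under $MP$; each such rule corresponds to a propositional tautology relating the characteristic formulas of premiss(es) and conclusion. For the modal and deontic rules, the induction hypothesis gives \textbf{X}-derivability of the characteristic formula of each premiss, and I would then apply the matching axiom or derived rule of \textbf{X}. For example, for $LR$-$E$ with premisses $A\To B$ and $B\To A$, the IH gives $\mathbf{X}\vdash A\to B$ and $\mathbf{X}\vdash B\to A$, hence $\mathbf{X}\vdash A\leftrightarrow B$, so $RE$ yields $\mathbf{X}\vdash\O A\leftrightarrow\O B$, from which the characteristic formula of the conclusion (which merely adds weakening context) follows propositionally. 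The deontic rules are handled using the corresponding axioms $D^\bot$, $D^\P$: e.g.\ for $L$-$D^\bot$ with premiss $A\To$, the IH gives $\mathbf{X}\vdash A\to\bot$, i.e.\ $\mathbf{X}\vdash\neg A$, and combining this with $D^\bot$ (via $RE$ or $RM$ to get $\O A\to\O\bot$ and then $\neg\O\bot$) yields $\mathbf{X}\vdash\neg\O A$, as required.

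The main obstacle, and where I would spend the most care, is the rule $LR$-$C$ (and its deontic counterpart $L$-$D^{\P_C}$), which has a variable number of premisses and whose conclusion introduces several boxed formulas at once. Here the characteristic-formula translation of the premisses is $A_1\wedge\dots\wedge A_n\to B$ together with the $n$ premisses $B\to A_i$, and I must verify that in a logic containing axiom $C$ (equivalently, closed under $RR$) these together yield $\O A_1\wedge\dots\wedge\O A_n\to\O B$ in \textbf{X}. This requires iterating $C$ to collapse $\O A_1\wedge\dots\wedge\O A_n$ toward $\O(A_1\wedge\dots\wedge A_n)$, using the $B\to A_i$ premisses together with $RM$/$RR$ to relate $\O B$ and $\O(A_1\wedge\dots\wedge A_n)$, and tracking the equivalences carefully; the bookkeeping with $n$ premisses is the genuinely delicate part, though conceptually routine given the axioms. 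A symmetric difficulty appears for $L$-$D^{\P_C}$, where the set of side premisses $\{\To A,B\mid A\in\Pi,B\in\Sigma\}$ must be combined with the complement-freeness content of $D^\P$ and with axiom $C$; I would treat this case by reducing it to the $LR$-$C$ pattern plus the derivation of $D^\P$ from $D^\bot$ in the presence of $C$ (Proposition after Table \ref{axioms}). All remaining cases are direct applications of the matching axiom, so once these two variable-arity rules are dispatched the induction goes through.
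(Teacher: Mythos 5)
Your overall strategy is the same as the paper's: both directions by induction (on the height of the axiomatic derivation, respectively of the {\bf G3X}-derivation), with Lemma \ref{ax} for the base case, cut admissibility for $MP$, and the rule-based axiomatizations $RM$, $RR$, $RK$ matched directly against $LR$-$M$, $LR$-$R$, $LR$-$K$. Most of your cases (initial sequents, propositional rules, $LR$-$E$, $LR$-$M$, $L$-$D^\bot$, $R$-$N$) are exactly the paper's.

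However, your treatment of $LR$-$C$ --- the case you yourself single out as the crux --- rests on a false identification. You write ``in a logic containing axiom $C$ (equivalently, closed under $RR$)'' and propose to use ``$RM$/$RR$ to relate $\O B$ and $\O(A_1\wedge\dots\wedge A_n)$''. Containing $C$ is \emph{not} equivalent to closure under $RR$: the rule $RR$ axiomatizes {\bf EMC} $=$ {\bf R}, whereas the calculi that actually contain rule $LR$-$C$ are {\bf G3C(N)} and their deontic extensions, whose logics {\bf EC(N)(D)} do not have $M$ as a theorem; hence neither $RM$ nor $RR$ is admissible there (applying $RM$ to the tautology $A\wedge B\to A$ would yield $M$). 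The monotonicity-free repair is precisely what the extra premisses $B\To A_i$ of $LR$-$C$ are for: by IH they give {\bf X} $\vdash B\to A_i$ for each $i$, hence {\bf X} $\vdash B\to A_1\wedge\dots\wedge A_n$, which together with the IH for the leftmost premiss yields the \emph{equivalence} $A_1\wedge\dots\wedge A_n\leftrightarrow B$; then the congruence rule $RE$ (available in every {\bf X}) gives $\O(A_1\wedge\dots\wedge A_n)\to\O B$, iterated axiom $C$ gives $\O A_1\wedge\dots\wedge\O A_n\to\O(A_1\wedge\dots\wedge A_n)$, and transitivity concludes --- no delicate bookkeeping is needed. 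The same confusion affects your plan for $L$-$D^{\P_C}$: you appeal to ``the derivation of $D^\P$ from $D^\bot$ in the presence of $C$'', but this goes the wrong way, since {\bf G3CD$^\P$} captures {\bf ECD$^\P$}, in which $D^\bot$ is not a theorem (cf.\ Figure \ref{cubed}). The paper instead argues directly: the IH yields {\bf ECD$^\P$} $\vdash\bigwedge\Pi\leftrightarrow\neg\bigwedge\Sigma$, then $RE$, axiom $D^\P$, and two generalized instances of axiom $C$ give $\bigwedge\O\Pi\to\neg\bigwedge\O\Sigma$. (A small further point you omit: using consistency and cut admissibility, one may assume instances of $L$-$D^{\P_E}$ and $L$-$D^{\P_C}$ have two principal formulas, resp.\ nonempty $\Pi,\Sigma$; otherwise the premisses would make the empty sequent derivable.)
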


\begin{proof}
%Let $\g\To\d$ be $A_1,\dots,A_n\To B_1,\dots,B_m$.
To prove the right-to-left implication, we argue by induction on the height of the  axiomatic derivation in  {\bf X}. The base case is covered by Lemma \ref{ax}. For the inductive steps, the case of $MP$ follows by the admissibility of Cut and the invertibility of rule $R\to$.  If the last step is by $RE$, then $\Gamma=\emptyset$ and  $\Delta$ is $\O C\leftrightarrow \O D$. We know that (in {\bf X}) we have derived $\O C\leftrightarrow\O D$ from $C\leftrightarrow D$. Remember that $C\leftrightarrow D$ is defined as $(C\to D)\wedge (D\to C)$.  Thus we assume, by inductive hypothesis (IH) , that {\bf G3ED} $\vdash\;  \To C\to D\wedge D\to C$. From this, by invertibility of  $R\wedge$ and $R\to$ (Lemma \ref{inv}), we obtain that {\bf G3ED} $\vdash\;  C\To D$ and {\bf G3ED} $\vdash\;  D\To C$.    We can thus  proceed as follows \vspace{0.3cm}

$$
\infer[\infrule R\wedge]{\To (\O C\to \O D)\wedge(\O D\to \O C)}{\infer[\infrule R\to]{\To \O C\to\O D}{\infer[\infrule LR\mbox{-}E]{\O C\To \O D}{\infer[IH+\ref{inv}]{C\To D}{}&\infer[IH+\ref{inv}]{D\To C}{}}
}&
\infer[\infrule R\to]{\To \O D\to\O C}{\infer[\infrule LR\mbox{-}E]{\O D\To \O C}{\infer[IH+\ref{inv}]{D\To C}{}&\infer[IH+\ref{inv}]{C\To D}{}}}}
$$

%\noindent{\small\begin{prooftree}
%\[\[\[\[
%{}
%\justifies
%\To C\to D
%\using IH\]
%\justifies
%C\To D
%\using \textrm{ Lem. }\ref{inv}\]
%\[\[
%{}
%\justifies
%\To D\to C
%\using IH\]
%\justifies
%D\To C
%\using \textrm{ Lem. }\ref{inv}\]
%\justifies
%\O C,A_1,\dots A_n\To\O D
%\using LR\textrm{-}E\]
%\justifies
%A_1,\dots A_n\To\O C\to\O D
%\using R\to\]
%	\[\[\[\[
%{}
%\justifies
%\To D\to C
%\using IH\]
%\justifies
%D\To C
%\using \textrm{ Lem. }\ref{inv}\]
%\[\[
%{}
%\justifies
%\To C\to D
%\using IH\]
%\justifies
%C\To D
%\using \textrm{ Lem. }\ref{inv}\]
%\justifies
%\O D,A_1,\dots A_n\To\O C
%\using LR\textrm{-}E\]
%\justifies
%A_1,\dots A_n\To\O D\to\O C
%\using R\to\]
%\justifies
%A_1,\dots,A_n\To (\O C\to \O D)\wedge(\O D\to \O C)
%\using R\wedge
%\end{prooftree}}\vspace{0.3cm}

For the converse implication, we assume {\bf G3X} $\vdash \g\To\d$, and show, by induction on the height of the derivation in sequent calculus, that {\bf X} \mbox{$\vdash \bigwedge \g\to\bigvee\d$.} If the derivation has height 0,  we have an initial sequent -- so $\g\cap\d\neq\emptyset$ -- or an instance on $L\bot$ -- thus $\bot\in\g$. In both cases the claim holds. If the height is $n+1$, we consider the last rule applied in the derivation. If it is a propositional one, the proof is straightforward. If it is a modal rule, we argue by cases.

If the last step of a derivation in {\bf G3E(ND)} is by  $LR$-$E$, we have derived $\O C,\g'\To\d',\O D$ from $C\To D$ and $D\To C$. By IH and propositional reasoning, {\bf ED} $\vdash C\leftrightarrow D$, thus {\bf ED} $\vdash \O C\to \O D$. By some propositional steps we conclude {\bf ED} $\vdash (\O C\wedge\bigwedge\g')\to (\bigvee\d'\lor\O D).$  The cases of $LR$-$M$,  $LR$-$R$, and $LR$-$K$  can be treated in a similar manner (thanks, respectively, to the rule $RM$, $RR$,  and $RK$ from Table \ref{rulesinf}).

If we are in {\bf G3C(ND)}, suppose the last step is the following instance of $LR$-$C$:
$$
\infer[\infrule LR\mbox{-}C]{\O C_1,\dots,\O C_k,\g'\To \d',\O D}{C_1,\dots C_k\To D&D\To C_1&\dots& D\To C_k}
%\begin{prooftree}
%\[
%\leadsto\using\de_{11}
%\justifies
%A_1,\dots,A_K\To B_n\]
%	\[
%	\leadsto\using\de_{A_1}
%	\justifies
%	B_n\To A_1\]\dots
%		\[
%		\leadsto\using\de_{A_k}
%		\justifies
%		B_n\To A_k\]
%\justifies
%\O A_1,\dots,\O A_k,A_{k+1},\dots,A_n\To B_1,\dots,B_{n-1},\O B_n
%\using LR\mbox{-}C
%\end{prooftree}
$$
By IH, we have  that  {\bf C(ND)} $\vdash D\to C_i$ for all $i\leq k$, and, by propositional reasoning, we have that  {\bf C(ND)} $\vdash D\to C_1\wedge\dots\wedge C_k$. We also know, by IH, that  {\bf C(ND)} $\vdash C_1\wedge\dots\wedge C_k\to D$. By applying $RE$ to these two theorems we get that 
\begin{equation}\label{3}
\mathbf{ C(ND)} \vdash \O(C_1\wedge\dots\wedge C_k)\to \O D
\end{equation}
 By using axiom $C$ and propositional reasoning, we know that 
 \begin{equation}\label{4}
 \mathbf{ C(ND)} \vdash \O C_1\wedge\dots\wedge\O C_k\to\O(C_1\wedge\dots\wedge C_k)
 \end{equation}
 By applying transitivity to (\ref{4}) and (\ref{3}) and some propositional steps, we conclude that 
 $$
  \mathbf{ C(ND)} \vdash (\O C_1\wedge\dots\wedge\O C_k\wedge \bigwedge\g')\to (\bigvee\d'\lor\O D)
  $$
  
  Let's now consider rule $L$-$D^\bot$.  Suppose we are in {\bf G3XD$^\bot$}  and  we have derived  $\O C,\g'\To\d$ from $C\To$. By IH,  {\bf XD$^\bot$} $\vdash C\to\bot$, and we know that {\bf xD$^\bot$} $\vdash \bot \to C$. Thus by $RE$ (or $RM$), we get {\bf XD$^\bot$} $\vdash \O C\to \O \bot$. By  contraposing it and then applying a $MP$ with the axiom $D^\bot$, we get that {\bf XD$^\bot$} $\vdash \neg\O C$. By some easy propositional steps we  conclude  {\bf XD$^\bot$} $\vdash ( \O C\wedge\bigwedge\g')\to \bigvee\d$. The case $R$-$N$ is similar. %manner (for $L$-$D^\star$ we make use of the theoremhood of $C$ and $D^\bot$). 

Let's consider rules $L$-$D^{\P_E}$. Suppose we are in {\bf G3ED$^\P$}  and we have derived \mbox{$\O A,\O B,\g'\To\d$} from the premisses $A,B\To$ and $\To A,B$. By induction we get that {\bf ED$^\P$}$\vdash A\wedge B\to\bot$ and {\bf ED$^\P$}$\vdash A\lor B$. Hence, {\bf ED$^\P$}$\vdash B\to \neg A$ and {\bf ED$^\P$}$\vdash\neg A\to B$. By applying $RE$ we get that  

$$ \mathbf{ED^\P}\vdash \Box B\to\Box \neg A$$
 which, thanks to  axiom $D^\P$, entails that  
 
 $$\mathbf{ ED^\P}\vdash \Box B\to\neg\Box A$$ 
  By some propositional steps we conclude   
  
 $$\mathbf{ ED^\P}\vdash (\Box A\wedge\Box B\wedge \bigwedge\g')\to\bigvee\d$$
   Notice that, thanks to Proposition \ref{cons}.4 and Theorem \ref{cut},  we can assume that instances of rule $L$-$D^\P$ always  have   two principal formulas. Otherwise the calculus would prove the empty sequent (we will also assume that neither $\Pi$ nor $\Sigma$ is empty in instances of rule $L$-$D^{\P_C}$).
   
    The case of $L$-$D^{\P_M}$ is analogous to that of $L$-$D^\bot$ for instances with one principal formula and to that of $L$-$D^{\P_E}$ for instances with two principal formulas.

Let's consider rule $L$-$D^{\P_C}$. Suppose we have a {\bf G3CD$^\P$}-derivation whose last step is:
$$
\infer{\O\Pi,\O\Sigma,\g'\To \d'}{\Pi,\Sigma\To &\{\To A,B| \,A\in\Pi\text{ and }B\in \Sigma\}}
$$
%We have to show that {\bf ECD$^\P$} $\vdash (\bigwedge\O\Pi\wedge\bigwedge\g')\to (\neg\bigwedge\O\Sigma\lor\bigvee\d')$.
By induction and by some easy propositional steps we   know that {\bf ECD$^\P$} $\vdash \bigwedge\Pi\leftrightarrow\neg\bigwedge\Sigma$. By rule $RE$ we derive {\bf ECD$^\P$} $\vdash\O\bigwedge\Pi\to\O\neg\bigwedge\Sigma$, which, thanks to axiom $D^\P$, entails that {\bf ECD$^\P$} $\vdash\O\bigwedge\Pi\to\neg\O\bigwedge\Sigma$. By transitivity with two (generalized) instances of axiom $C$ we obtain {\bf ECD$^\P$} $\vdash \bigwedge\O\Pi\to \neg\bigwedge\O\Sigma$. By some easy  propositional steps we conclude that  \mbox{{\bf ECD$^\P$} $\vdash (\bigwedge\O\Pi\wedge\bigwedge\O\Sigma\wedge\bigwedge\g')\to\bigvee\d$.}
%The   cases of the other $LR$-rules, in the respective calculi, are left to the reader.{}

The admissibility of $L$-$D^*$ in {\bf EC(N)D}, {\bf RD}, and {\bf KD} is similar to that of $LR$-$C$:  in (\ref{3}) we replace $\O D$ with $\O \bot$ and then we use theorem $D^\bot$ to transform it into $\bot$.
\end{proof}
By combining this and Theorem \ref{compax} we have the following result.
\begin{corollary}
The calculus {\bf G3X} is sound and complete with respect to the class of all neighbourhood models for {\bf X}.
\end{corollary}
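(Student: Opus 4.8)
The plan is to obtain the result simply by composing the two biconditionals already established, so the work reduces to bookkeeping. Recall from Section~\ref{seccalculi} that a sequent $\gsd$ carries the denotational reading $\bigwedge\g\to\bigvee\d$, where an empty antecedent is interpreted as $\top$ and an empty succedent as $\bot$. Accordingly, I would declare $\gsd$ \emph{valid} in a class $\C$ of neighbourhood models precisely when its characteristic formula $\bigwedge\g\to\bigvee\d$ is valid in $\C$ in the sense of Section~\ref{semantics}; this makes validity of a sequent a derived notion and lets the two theorems speak to each other.

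First I would apply Theorem~\ref{comp} to transfer derivability in the calculus to derivability in the axiomatic system, namely $\mathbf{G3X}\vdash\gsd$ iff $\mathbf{X}\vdash\bigwedge\g\to\bigvee\d$. Next I would apply Theorem~\ref{compax}, which gives soundness and completeness of $\mathbf{X}$ with respect to the class $\C_{\mathbf{X}}$ of all neighbourhood models satisfying the frame properties (Proposition~\ref{corrax}) corresponding to the axioms of $\mathbf{X}$; hence $\mathbf{X}\vdash\bigwedge\g\to\bigvee\d$ iff $\bigwedge\g\to\bigvee\d$ is valid in $\C_{\mathbf{X}}$. Chaining the two equivalences yields $\mathbf{G3X}\vdash\gsd$ iff $\bigwedge\g\to\bigvee\d$ is valid in $\C_{\mathbf{X}}$, which by the convention just fixed is exactly validity of $\gsd$ in $\C_{\mathbf{X}}$. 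Reading the two directions of this biconditional separately gives soundness and completeness of $\mathbf{G3X}$, as required.

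I do not expect any genuine obstacle, since the substantive content is carried entirely by Theorems~\ref{comp} and~\ref{compax}. The only point demanding care is making the notion of validity of a \emph{sequent} (as opposed to a formula) explicit and checking that the $\top$/$\bot$ conventions attached to empty antecedents and succedents in the characteristic formula agree with the denotational interpretation, so that the two biconditionals compose without any mismatch at these edge cases.
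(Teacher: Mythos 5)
Your proposal is correct and matches the paper's own argument exactly: the paper obtains the corollary by "combining" Theorem~\ref{comp} (equivalence of $\mathbf{G3X}$ with the axiomatic system $\mathbf{X}$ via the characteristic formula $\bigwedge\g\to\bigvee\d$) with Theorem~\ref{compax} (soundness and completeness of $\mathbf{X}$ for the corresponding class of neighbourhood models), which is precisely your chaining of the two biconditionals. Your extra care about reading sequent validity through the characteristic formula and the $\top$/$\bot$ conventions is a sensible spelling-out of what the paper leaves implicit.
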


%\begin{corollary}\label{cutcomp}
%The calculus  {\bf G3CD$^\P$}$+\{$Cut$\}$, but not {\bf G3CD$^\P$}, is (equivalent to {\bf CD$^\P$} and) sound and complete with respect to the class of all neighbourhood models that are closed under finite intersection and complement-free.
%\end{corollary}
%\begin{proof}
%An immediate consequence of   Theorem \ref{comp} and Proposition \ref{nonCut}.
%\end{proof}
%

\subsection{Forrester's Paradox}\label{forrester}
  As an application of our decision procedure, we use it to analyse two formal reconstructions of Forrester's paradox \cite{F84}, which   is one of the many paradoxes that endanger the normal deontic logic {\bf KD} \cite{M06}. Forrester's informal argument goes as follows:

\begin{quote}

Consider the following three statements:
\begin{enumerate}
\item Jones murders Smith.
\item Jones ought not murder Smith.
\item  If Jones murders Smith, then Jones ought to murder Smith gently.
\end{enumerate}
Intuitively, these sentences appear to be consistent. However 1 and 3 together imply that
\begin{itemize}
\item[4.] Jones ought to murder Smith gently.
\end{itemize}
Also we accept the following conditional:
\begin{itemize}
\item[5.]  If Jones murders Smith gently, then Jones  murder Smith.
\end{itemize}
Of course, this is \emph{not} a logical validity but, rather, a fact about the world we live in. Now, if we assume that the monotonicity rule is valid, then statement 5 entails
\begin{itemize}
\item[6.]  If Jones ought to murder Smith gently, then Jones  ought to murder Smith.
\end{itemize}
And so, statements 4 and 6 together imply
\begin{itemize}
\item[7.] Jones ought to murder Smith.
\end{itemize}
But [given the validity of $D^\P$] this contradicts statement 2. The above argument suggests  that classical deontic logic should \emph{not} validate the monotonicity rule [$RM$] \cite[p. 16]{P17}  
\end{quote}

 We  show that Forrester's paradox is not a valid argument in deontic logics by presenting, in Figure \ref{fig}, a failed {\bf G3KD}-proof search  of the sequent that expresses it:% There are other formal rendering of Forrester's argument, but they are or equivalent to the one given here, or else less convincing as the one in \cite{}
\begin{equation}
g\to m,m\to\Box g,\O\neg m,m\To
\end{equation}
 where $m$ stands for 'John \underline{m}urders Smith' and $g$ for `John murders Smith \underline{g}ently' \cite[pp. 87--91]{M06}.
Note that, by Theorem \ref{comp}, if Forrester's paradox  is not {\bf G3KD}-derivable, then it is not valid  in all the weaker  deontic logics we have considered.

\begin{figure}
$$\infer[\infrule L\to]{g\to m,m\to\Box g,\O\neg m,m\To}{
\infer[\infrule L\to]{m\to\Box g,\O\neg m,m\To g}{
\deduce{m,\O \neg m\To g,m^{\phantom{a}}}{\textnormal{closed}}&
\infer{\O g,\O\neg m,m\To g^{\phantom{a}}}{
\infer{L\mbox{-}D^\star}{
\infer[\infrule L\neg]{g,\neg m\To}{\deduce{g\To m}{\textnormal{open}}}}
&
\infer{L\mbox{-}D^\star}{\deduce{g\To }{\textnormal{open}}}
&
\infer{L\mbox{-}D^\star}{\infer[\infrule L\neg]{\neg m\To}{\deduce{\To m}{\textnormal{open}}}}}
}&
\deduce{m,m\to\Box g,\O\neg m,m\To^{\phantom{A}}}{\vdots}}$$
\caption{Failed {\bf G3KD}-proof search of Forrester's paradox \cite{M06}}\label{fig}
\end{figure}

 To make our failed proof search into a derivation of Forrester's paradox, we would have to add (to {\bf G3MD$^\P$} or stronger calculi) a non-logical axiom $\To g\to m$, and to have cut as a primitive -- and  ineliminable -- rule of inference.  An  Hilbert-style axiomatization of Forrester's argument -- e.g., \cite[p. 88]{M06} -- hides this cut with a non-logical axiom in the  step where   $\O g\to\O m$ is derived from $ g\to m$,  by one of $RM$, $R R$ or $RK$. This step -- i.e., the step from 5 to 6 in the informal argument above -- is not acceptable because none of these rules allows to infer its conclusion when the premiss is  an assumption  and not a  theorem. We have here an instance of the same  problem that has led many authors to conclude that the deduction theorem fails in modal logics, conclusion that has been  shown to be wrong in \cite{NH12}. %But  the formula $g\to m$   is intuitively acceptable because it is ``plain common sense'' \cite[p.195]{F84}, and not because it is a logical validity, therefore it has to be treated as an assumption, and not as a theorem. 
 
 An alternative formulation of Forrester's argument is given in  \cite{T97}, where the sentence `John murders Smith gently' is expressed by the complex formula $g\wedge m$ instead of by the atomic $g$. In this case Forrester's argument becomes valid whenever the monotonicity rule is valid as it shown in Figure \ref{figfig}. Nevertheless, whereas it was an essential ingredient of the informal version, under this formalization premiss 5 becomes dispensable. Hence it is disputable that this is an acceptable way of formalising Forrester's argument.
 \begin{figure}
$$ \infer[\infrule L\to]{m\to\O(m\wedge g),\O\neg m,m\To}{
 \deduce{\O\neg m,m\To m^{\phantom{a}}}{\textnormal{closed}}&
\infer{\O(g\wedge m),\O\neg m,m\To}{ \infer{L\mbox{-}D^\star}{
\infer[\infrule L\wedge]{g\wedge m,\neg m\To}{
\infer[\infrule L\neg]{g, m,\neg m\To}{
\deduce{g,m\To m}{\textnormal{closed}}}}}&
  \infer{L\mbox{-}D^\star}{g\wedge m\To}&
   \infer{L\mbox{-}D^\star}{\neg m\To}}
  }$$
 \caption{ Succesfull {\bf G3MD}-proof search for the alternative version of  Forrester's paradox \cite{T97}}\label{figfig}
 \end{figure}
  
 This is not the place to discuss at length  the correctness of formal representation of Forrester's argument  and their implications for deontic logics. We just wanted to illustrate how  the calculi {\bf G3XD} can be  used to analyse formal representations of the  deontic paradoxes. If Forrester's argument  is formalised as in \cite{M06} then  it does not force to adopt a deontic logic weaker than {\bf KD}. If, instead, it is formalised as in \cite{T97} then it forces the adoption of a logic where $RM$ fails, but the formal derivation  differs substantially from Forrester's informal argument \cite{F84}.
%To sum up, Forrester's argument -- at least in its more plausible and standard version -- is invalid in all  deontic logics considered, but it  appears to be valid because Hilbert-style axiomatic systems have been applied without keeping attention  to the difference between assumptions and theorems. 
%We have here an instance of the same  problem that has led many authors to conclude that the deduction theorem fails in modal logics, conclusion that has been  shown to be wrong in \cite{NH12}. 
 
%--------
\section{Craig's Interpolation Theorem}\label{secinterpol}
In this section we use Maehara's \cite{M60,M61} technique to prove Craig's interpolation theorem for each modal or deontic logic {\bf X} which has  $C$  as theorem only if it has also $M$  (Example \ref{prob} illustrates the problem with the non-standard rule $LR$-$C$).

\begin{theorem}[Craig's interpolation theorem] \label{Craig}
Let $A\to B$ be a theorem of a  logic {\bf X}  that differs from {\bf EC(N)} and its deontic extensions {\bf EC(N)D} and {\bf ECD$^\P$}, then 
%If $A$ and $B$ share some propositional variable, t
there is a formula $I$, which contains  propositional variables common to $A$ and $B$ only, such that both $A\to I$ and $I\to B$ are theorems of {\bf X}.
%\item If $A$ and $B$ share no propositional variable, then either $\neg A$  or $B$ is a theorem of {\bf xD}.

\end{theorem}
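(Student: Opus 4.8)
The plan is to prove Craig's interpolation theorem via the standard Maehara technique adapted to the sequent calculi $\mathbf{G3X}$. The key move is to strengthen the statement so that it can be proved by induction on the height of a $\mathbf{G3X}$-derivation. Since by Theorem~\ref{comp} the hypothesis $\mathbf{X}\vdash A\to B$ is equivalent to $\mathbf{G3X}\vdash A\To B$, it suffices to work entirely inside the sequent calculus. Following Maehara, I would introduce the notion of a \emph{split} of a sequent: a partition of the antecedent and the succedent into two disjoint sub-multisets, written $\g_1;\g_2\To\d_1;\d_2$, where the semicolon separates the ``left'' components (indexed $1$) from the ``right'' ones (indexed $2$). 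The strengthened claim to be proved by induction is: for every $\mathbf{G3X}$-derivation of $\g\To\d$ and every split $\g_1,\g_2=\g$, $\d_1,\d_2=\d$, there is a formula $I$ such that $\mathbf{G3X}\vdash\g_1\To\d_1,I$ and $\mathbf{G3X}\vdash I,\g_2\To\d_2$, and moreover every propositional variable occurring in $I$ occurs both in $\g_1,\d_1$ and in $\g_2,\d_2$. The theorem then follows by taking the split $A;\emptyset\To\emptyset;B$ of the derivable sequent $A\To B$: the resulting $I$ shares variables with both sides, and the two interpolant sequents give $\mathbf{X}\vdash A\to I$ and $\mathbf{X}\vdash I\to B$ by Theorem~\ref{comp}.

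Next I would set up the base cases and the inductive steps. For initial sequents $p,\g\To\d,p$ the interpolant depends on which side the two occurrences of $p$ fall: if both $p$'s land in component $1$ the interpolant is $\bot$; if both land in component $2$ it is $\top$; if the left $p$ is in component $1$ and the right in component $2$ the interpolant is $p$ itself; and in the mirror case it is $\neg p$. Crucially, because $\bot$ is a primitive $0$-ary symbol of $\L$ (as the excerpt emphasises just before Table~\ref{rulesinf}), the instances of $L\bot$ can be handled uniformly: for $\bot,\g\To\d$ split with $\bot$ in component $1$ the interpolant is $\bot$, and with $\bot$ in component $2$ it is $\top$, and the required interpolant sequents are themselves derivable via $L\bot$. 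For the inductive step on the propositional rules one proceeds exactly as in the classical Maehara argument, combining the interpolants of the premisses with the appropriate Boolean connective (conjunction or disjunction) according to whether the principal formula lands in component $1$ or component $2$, and checking the variable-sharing condition by the subformula property (Proposition~\ref{cons}.3).

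The genuinely modal part of the induction concerns the rules $LR\text{-}E$, $LR\text{-}M$, $LR\text{-}R$, $LR\text{-}K$, $R\text{-}N$ and the deontic rules $L\text{-}D^\bot$, $L\text{-}D^{\P_E}$, $L\text{-}D^{\P_M}$, $L\text{-}D^*$. For each such rule the interpolant of the conclusion is built from the interpolant(s) of the premiss(es) by prefixing a box, so that the modal structure of the end-sequent is matched. Take $LR\text{-}M$ with conclusion $\O B,\g\To\d,\O C$: given a split, the premiss $B\To C$ is itself a sequent whose interpolant $J$ can be obtained from the trivial split $B;\emptyset\To\emptyset;C$ (or $\emptyset;B\To C;\emptyset$, depending on how the two principal formulas are distributed between the two components). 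If $\O B$ is placed in component $1$ and $\O C$ in component $2$, one derives $\O B,\g_1\To\d_1,\O J$ and $\O J,\g_2\To\d_2,\O C$ by a further application of $LR\text{-}M$ to the premiss-interpolant sequents, so the conclusion's interpolant is $\O J$; when both boxed formulas fall on the same side the interpolant collapses to $\bot$ or $\top$ as in the propositional case. The cases for $LR\text{-}E$, $LR\text{-}R$, $LR\text{-}K$ and $R\text{-}N$ follow the same pattern, using $RE$/$RM$/$RR$/$RK$ and axiom $N$ at the object level (via Theorem~\ref{comp}), and the deontic rules are handled analogously, invoking axioms $D^\bot$ and $D^\P$ to verify the interpolant sequents. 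Throughout, the variable-sharing condition is preserved because every active formula in these rules is a subformula of a principal boxed formula.

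The main obstacle, and the reason the theorem explicitly excludes $\mathbf{EC(N)}$ and its deontic extensions $\mathbf{EC(N)D}$ and $\mathbf{ECD^\P}$, is the rule $LR\text{-}C$ (and the companion $L\text{-}D^{\P_C}$): these are \emph{non-standard}, having a variable, unbounded number of premisses and several principal formulas in the antecedent. In such a rule the $n$ boxed formulas $\O A_1,\dots,\O A_n$ could be distributed in both components of the split simultaneously, and one needs a single boxed interpolant $\O J$ that correctly separates them while preserving variable sharing; but the premisses $A_1,\dots,A_n\To B$ and $B\To A_i$ intertwine all the $A_i$ with $B$ in a way that a single boxed subformula cannot disentangle without importing foreign variables. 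This is precisely the difficulty announced in the introduction and illustrated in Example~\ref{prob}, and it is why the construction succeeds for every logic in which $C$ is available only alongside $M$ (so that the standard rules $LR\text{-}R$ or $LR\text{-}K$ replace $LR\text{-}C$ and $L\text{-}D^*$ replaces $L\text{-}D^{\P_C}$), but not for the logics built on the bare conjunction rule. Hence the proof goes through exactly for the logics with a \emph{standard} calculus, and the restriction in the hypothesis is what makes the modal induction step well-defined.
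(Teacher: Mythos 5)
Your strategy coincides with the paper's: Maehara's technique via partitions of sequents, induction on the height of the {\bf G3X}-derivation, with Theorem \ref{comp} translating between {\bf X} and {\bf G3X}, and your account of why the {\bf EC(N)}-family is excluded matches Example \ref{prob}. The gap is in the modal induction step, which is the real content of Lemma \ref{Maehara}. You assert that for every modal and deontic rule ``the interpolant of the conclusion is built from the interpolant(s) of the premiss(es) by prefixing a box''. That recipe fails for the \emph{mirror} splits, where the principal antecedent formula goes to the second component and the principal succedent formula to the first. Take $LR$-$M$ with conclusion $\O A,\g\To\d,\O B$ and the partition $\<\g_1\To\d_1',\O B\;||\;\O A,\g_2'\To\d_2\>$: an interpolant $\O J$ would require deriving $\g_1\To\d_1',\O B,\O J$, but in {\bf G3M} a box can enter the succedent only as the principal formula of $LR$-$M$, which needs a principal box in the antecedent, and $\g_1$ need contain none. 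Concretely, for $\O p\To\O p$ (derived from $p\To p$) split as $\<\;\To\O p\;||\;\O p\To\;\>$, the sequent $\To\O p,\O J$ is underivable for \emph{every} $J$, so no boxed interpolant exists. The paper uses a \emph{diamond} interpolant here, $\P C=\neg\O\neg C$ with $C$ interpolating $\<\To B\;||\;A\To\;\>$, verified via the admissible rules $R\neg$, $L\neg$ and $LR$-$M$ applied to $\neg C\To B$ and $A\To\neg C$. The same diamond-shaped interpolants are indispensable in case (1) of $LR$-$R$, in $LR$-$K$, and in $L$-$D^\star$ when the first component contains no principal formula; your recipe breaks in all of these cases.

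A second omission: you say $LR$-$E$ ``follows the same pattern'' as $LR$-$M$, but the congruence rule has two premisses, $A\To B$ and $B\To A$, and verifying that $\O C$ (with $C$ an interpolant obtained from the first premiss) satisfies the derivability condition requires the sequents $C\To A$ and $B\To C$, which the paper obtains by cutting the interpolants of the two premisses against each other (e.g.\ $C\To A$ from $C\To B$, $B\To D$ and $D\To A$ by two instances of cut, legitimate only because of Theorem \ref{cut}). The same happens for $L$-$D^{\P_E}$ and for $L$-$D^{\P_M}$ with two principal formulas. So cut admissibility is an essential ingredient of the induction step for the congruence-style calculi, not a background nicety, and a proof that never invokes it cannot close these cases. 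By contrast, your treatment of the same-side partitions (interpolant $\bot$ or $\top$, using $L\bot$, admissible weakening, and the derivability of the rule's conclusion restricted to one component) is correct, and indeed marginally simpler than the paper's, which re-uses the variable-free premiss interpolant there.
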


\noindent In order to prove this theorem, we use the following notions.

\begin{definition} %Given a sequent $\g_1,\g_2\To\d_1,\d_2$ 

A \emph{partition} of a sequent $\g\To \d$  is any pair of sequents \\$\<\g_1\To\d_1\;||\;\g_2\To\d_2\>$ such that $\g_1,\g_2=\g$ and $\d_1,\d_2=\d$.\\
A \emph{{\bf G3X}-interpolant of a partition} $\<\g_1\To\d_1\;||\;\g_2\To\d_2\>$ is any formula $I$  such that:
\begin{enumerate}  
\item All propositional variables in $I$ are    in $(\g_1\cup\d_1)\cap(\g_2\cup\d_2)$;
 \item {\bf G3X} $\vdash\g_1\To\d_1,I$ and {\bf G3X} $\vdash I,\g_2\To\d_2$.
 \end{enumerate}

\end{definition}

%\noindent
If $I$ is a {\bf G3X}-interpolant of the partition $\<\g_1\To\d_1\;||\;\g_2\To\d_2\>$, we write

$$(\textrm{{\bf G3X}}\vdash)\;\<\g_1\To\d_1\;\stackrel{I}{||}\;\g_2\To\d_2\>
$$

\noindent where one or more of the multisets $\g_1,\g_2,\d_1,\d_2$ may be empty. When the set of propositional variables in $(\g_1\cup\d_1)\cap(\g_2\cup\d_2)$ is empty, the {\bf X}-interpolant has to be constructed from $\bot$ (and $\top$).
The proof of  Theorem \ref{Craig} is by the following lemma, originally due to Maehara \cite{M60,M61} for (an extension of) classical logic.

\begin{lemma}[Maehara's lemma]\label{Maehara} If {\bf G3X} $\vdash \g\To\d$ and $LR$-$C$ (and $L$-$D^{\P_C}$) is not a  rule of {\bf G3X} (see Tables \ref{modcalculi} and \ref{deoncalculi}), every partition   of $\g\To\d$ has a {\bf G3X}-interpolant.
\end{lemma}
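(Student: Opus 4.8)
The plan is to prove Maehara's lemma by induction on the height of the {\bf G3X}-derivation of $\g\To\d$, constructing an interpolant for an arbitrary partition $\<\g_1\To\d_1\;||\;\g_2\To\d_2\>$ at each step. In the base case the derivation has height $0$, so $\g\To\d$ is either an initial sequent $p_n,\g'\To\d',p_n$ or a conclusion of $L\bot$. Here the interpolant depends on which side of the partition the principal atom(s) fall: for instance, if $p_n$ lies in $\g_1$ and $p_n$ lies in $\d_2$, then $p_n$ is common to both halves and serves as its own interpolant; if both copies lie in the same half (say the first) then $\bot$ (for the empty common variables) or $\bot$/$\top$ built from it works, since that half is already derivable. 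The use of $\bot$ as a genuine $0$-ary symbol — emphasised in the excerpt just after the grammar — is exactly what makes these degenerate cases go through without needing a common variable.

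For the inductive step I would proceed by cases on the last rule applied. For the propositional rules this is Maehara's classical construction \cite{M60,M61}, which I would simply recall: when the principal formula lies in $\g_1\To\d_1$ one combines the interpolants of the premisses' induced partitions using $\lor$ or $\wedge$ (dually when it lies in the other half), and one verifies clauses (1) and (2) by height-preserving weakening (Theorem \ref{weak}) together with the appropriate propositional rule. The genuinely modal work is in handling $LR$-$E$, $LR$-$M$, $LR$-$R$, $LR$-$K$, $R$-$N$, $L$-$D^\bot$, $L$-$D^{\P_E}$, $L$-$D^{\P_M}$, and $L$-$D^*$. For each such rule the principal boxed formulas sit in the contexts, so the partition of the conclusion induces a partition of the (unboxed) premiss(es); by IH I get an interpolant $J$ for the premiss partition and must lift it to an interpolant $I$ for the conclusion. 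The natural candidates are $I:=\O J$, or $I:=\neg\O\neg J$, or $J$ itself boxed appropriately, with the exact shape dictated by whether the principal formula is split into $\g_1$ or $\g_2$.

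The main obstacle — and the reason the statement explicitly excludes $LR$-$C$ (and $L$-$D^{\P_C}$) — is that the lifting step requires the modal rule to be reapplicable to the derivations witnessing clause (2) for $J$, and this works smoothly only for rules with a fixed, bounded number of premisses whose active formulas recombine cleanly. For the congruence/monotonicity rules one checks, e.g., that from {\bf G3X}$\;\vdash A\To B,J$ and {\bf G3X}$\;\vdash J,\ldots$ one can reconstruct derivations of $\O A\To \ldots,\O J$ and of $\O J,\ldots\To \O B$ by a further application of the same rule (possibly with a $\P$-dual twist for the right-hand premiss of $LR$-$E$ and $L$-$D^{\P_E}$). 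The deontic rules $L$-$D^\bot$, $L$-$D^{\P_E}$, $L$-$D^{\P_M}$ and $L$-$D^*$ are handled analogously, treating the one- or two-principal-formula cases separately; the admissibility of the logical rules $L\neg$, $R\neg$ and the hp-invertibility of the propositional rules (Lemma \ref{inv}) are used freely to massage interpolants into $\P$-form. I would verify clause (1) throughout by noting that every variable introduced into $I$ already occurs in $J$, hence in the common variables of the premiss partition, which are a subset of those of the conclusion partition. The rule $LR$-$C$ resists this scheme — as Example \ref{prob} explains — because it has a variable number of premisses of mixed polarity ($B\To A_i$ together with $A_1,\dots,A_n\To B$), so no single boxed formula simultaneously interpolates all the branches, which is precisely why Theorem \ref{Craig} is restricted to logics whose calculi avoid it.
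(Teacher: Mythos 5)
Your plan coincides with the paper's own proof in all its main lines: the same induction on derivation height, the same base cases (with the same observation that $\bot$ being a primitive $0$-ary symbol is what handles partitions with empty common language), the standard Maehara treatment of the propositional rules, and the same menu of lifted interpolants $\O J$, $\P J$ ($=\neg\O\neg J$), or $J$ itself, chosen according to how the partition splits the principal formulas; your diagnosis of why $LR$-$C$ is excluded also matches the paper's Example \ref{prob}. There is, however, one genuine gap, and it sits exactly at the crux of the modal cases: for the two-premiss symmetric rules $LR$-$E$ and $L$-$D^{\P_E}$ (and the two-principal-formula instances of $L$-$D^{\P_M}$), the lifting cannot be carried out ``by a further application of the same rule'' together with $\neg$-massaging, as you assert. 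Consider the partition $\<\O A,\g_1'\To\d_1\;||\;\g_2\To\d_2',\O B\>$ of the conclusion of $LR$-$E$, with candidate interpolant $\O C$, where $C$ interpolates $\<A\To\;||\;\To B\>$. To derive $\O A,\g_1'\To\d_1,\O C$ by $LR$-$E$ you need \emph{both} $A\To C$ (which the inductive hypothesis gives) and the converse $C\To A$, which the inductive hypothesis does not give. The paper closes this hole by using the interpolant $D$ of the partition $\<B\To\;||\;\To A\>$ of the \emph{second} premiss and two instances of admissible cut (Theorem \ref{cut}): cutting $C\To B$ against $B\To D$ yields $C\To D$, and cutting this against $D\To A$ yields $C\To A$ (and symmetrically for the derivation on the other side of the partition). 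Your sketch speaks throughout of a single premiss interpolant $J$ and never invokes cut, so as written the $LR$-$E$ and $L$-$D^{\P_E}$ cases fail: both premiss interpolants $C$ and $D$ must enter the verification, glued by cuts.

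This is not a pedantic omission, because it is precisely where the $LR$-$C$ obstruction lives: in the paper's Example \ref{prob} the analogous cut-stitching for $LR$-$C$ verifies clause (ii) of the interpolant definition but not clause (i), so an argument that leaves the cut-combination implicit cannot explain why the symmetric two-premiss rule $LR$-$E$ (which also has premisses of ``mixed polarity'') goes through while $LR$-$C$ does not; your stated reason --- a variable number of mixed-polarity premisses --- would wrongly condemn $LR$-$E$ as well, whereas the actual problem with $LR$-$C$ is the splitting of the several boxed antecedent formulas across the two halves of the partition. Two smaller points: the principal formulas of the modal rules are displayed principal formulas, not members of the weakening contexts (your phrasing suggests otherwise, though your subsequent treatment is consistent with the paper's); and in the degenerate cases where all principal formulas land in the same half, the correct move --- which your ``$J$ itself'' option does cover --- is that the premiss interpolant is variable-free and is verified on one side by reapplying the rule plus admissible weakening, on the other by weakening from $C\To$ or $\To C$.
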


\begin{proof}
The proof is  by induction on the height of the derivation $\de$ of $\g\To\d$. We have to show that each partition of an initial sequent (or of a conclusion of a 0-premiss rule) has a  {\bf G3X}-interpolant and that for each rule of {\bf G3X} (but $LR$-$C$ and $L$-$D^{\P_C}$) we have an effective procedure that outputs a {\bf G3X}-interpolant for any partition of its conclusion from the interpolant(s) of suitable partition(s) of its premiss(es). The proof is modular and, hence, we can consider the modal rules without having to reconsider them in the different calculi.

For the base case of initial sequents with $p$ principal formula, we have four possible partitions, whose interpolants are:\begin{center}\begin{tabular}{ccc}
$(1)\;\<p,\g_1'\To\d_1',p\;\stackrel{\bot}{||}\;\g_2\To\d_2\>\qquad$&$\qquad(2)\;\<p,\g_1'\To\d_1\;\stackrel{p}{||}\;\g_2\To\d'_2,p\>$\\\noalign{\smallskip\smallskip}
$(3)\;\<\g_1\To\d_1',p\;\stackrel{\neg p}{||}\;p,\g'_2\To\d_2\>\qquad$&$\qquad(4)\;\<\g_1\To\d_1\;\stackrel{\top}{||}\;p,\g'_2\To\d'_2,p\>$
\end{tabular}\end{center} 

\noindent and for the base case of rule $L\bot$, we have:
\begin{center}\begin{tabular}{ccc}
$(1)\;\<\bot,\g_1'\To\d_1\;\stackrel{\bot}{||}\;\g_2\To\d_2\>\qquad$&$\qquad(2)\;\<\g_1\To\d_1\;\stackrel{\top}{||}\;\bot,\g'_2\To\d_2,\>$
\end{tabular}\end{center}

 For the proof of (some of) the propositional  cases the reader is referred to \cite[pp. 117-118]{T00}. Thus, we have only to prove that all the modal and deontic  rules of Table \ref{Modal rules} (modulo  $LR$-$C$ and $L$-$D^{\P_C}$)  behave as desired.

%%%%%%%%%%%%          LR-E
\noindent\textbullet$\quad$ {\bf LR-E}$)\quad$ If the last rule applied in $\de$ is 

\begin{center} \begin{prooftree}
 A\To B\qquad B\To A
 \justifies
 \O A,\g\To\d,\O B
 \using LR\textrm{-}E
  \end{prooftree} 
  \end{center}
  \noindent we have  four kinds of partitions of the conclusion:

\begin{center}\begin{tabular}{ccc}
$(1)\;\<\O A,\g_1'\To\d_1',\O B\;||\;\g_2\To\d_2\>\qquad$&$\qquad(2)\;\<\O A,\g_1'\To\d_1\;||\;\g_2\To\d'_2,\O B\>$\\\noalign{\smallskip\smallskip}
$(3)\;\<\g_1\To\d_1',\O B\;||\;\O A,\g'_2\To\d_2\>\qquad$&$\qquad(4)\;\<\g_1\To\d_1\;||\;\O A,\g'_2\To\d'_2,\O B\>$
\end{tabular}\end{center}

\noindent In each  case we have to choose  partitions of the premisses that permit to construct a {\bf G3E(ND)}-interpolant for the partition under consideration.

 In case {\bf(1)} we have 
 $$
\framebox{ \infer[\infrule LR\mbox{-}E]{\<\O A,\g_1'\To\d_1',\O B\;\stackrel{C}{||}\;\g_2\To\d_2\>}{ \<A\To B\;\stackrel{C}{||}\;\To\>&\<B\To A\;\stackrel{D}{||}\;\To\>}
} $$
% \begin{center} \begin{prooftree}
% \<A\To B\;\stackrel{\bot}{||}\;\To\> \qquad  \<B\To A\;\stackrel{\bot}{||}\;\To\>
% \justifies
%\<\O A,\g_1'\To\d_1',\O B\;\stackrel{\bot}{||}\;\g_2\To\d_2\>
% \using LR\textrm{-}E
%  \end{prooftree} 
%  \end{center}
  
\noindent   This can be shown as follows. By  IH there is some $C$ ($D$) that is a {\bf G3E(ND)}-interpolant of the given partition of the left (right) premiss.  Thus both $C$ and $D$ contains only propositional variables common to $A$ and $B$; and (i) $\vdash A\To B,C\;$ (ii) $\vdash C\To\;$ (iii) $\vdash B\To A,D\;$ and (iv) $\vdash D\To\;$. Since the common language of the partitions of the premisses is empty,   no propositional variable can occur in $C$ nor in $D$. Here is a proof that $C$ is a {\bf G3E(ND)}-interpolant of the partition under consideration (the sequents $A\To B$ and $B\To A$ are derivable since they are the premisses of the instance of $LR$-$E$ we are considering):
  
$$
\infer[\infrule LR\mbox{-}E]{\O A,\g'_1\To\d_1',\O B,C}{\infer{A\To B}{}&\infer{B\To A}{}}
\qquad\qquad
\infer[\infrule LWs+RWs]{C,\g_2\To\d_2}{\infer[\infruler{(ii)}]{C\To}{}}
$$

%    \begin{center}\begin{tabular}{ccc}
%\begin{prooftree}
%\[
%A\To B,\bot
%\justifies 
%A\To B
%\using R\bot\]
%\[
%B\To A,\bot
%\justifies
%B\To A
%\using R\bot\]
%\justifies
%\O A,\g_1'\To\d_1',\O B,\bot
%\using LR\textrm{-}E\qquad
%\end{prooftree}&
%\begin{prooftree}
%{}
%\justifies
%\bot,\g_2\To\d_2
%\using L\bot
%\end{prooftree}
%\end{tabular}\end{center}

\noindent In case {\bf(2)} we have

$$
\framebox{\infer[\infrule LR\mbox{-}E]{\<\O A,\g_1'\To\d_1\;\stackrel{\O C}{||}\;\g_2\To\d'_2,\O B\>}{ \<A\To \;\stackrel{C}{||}\;\To B\> &  \<B\To \;\stackrel{D}{||}\;\To A\>}
}$$
% \begin{center} \begin{prooftree}
% \<A\To \;\stackrel{C}{||}\;\To B\> \qquad  \<B\To \;\stackrel{D}{||}\;\To A\>
% \justifies
%\<\O A,\g_1'\To\d_1\;\stackrel{\O C}{||}\;\g_2\To\d'_2,\O B\>
% \using LR\textrm{-}E
%  \end{prooftree} 
%  \end{center}
  
\noindent By IH it holds that  some $C$ and $D$ are {\bf G3E(ND)}-interpolants of the given partitions of the premisses.  Thus, (i) $\vdash A\To C\;$ (ii) $\vdash C\To B\;$ (iii) $\vdash B\To D\;$ (iv) $\vdash D\To A\;$ and (v)  all propositional variables in $C\cup D$ are in $A\cap B$.
 Here is a proof that $\O C$ is a {\bf G3E(ND)}-interpolant of the given partition (the language condition is satisfied thanks to (v)\,):
 
$$
\infer[\infrule LR\mbox{-}E]{\O A,\g_1'\To\d_1,\O C}{\infer[\infruler{(i)}]{A\To C}{}&
\infer[\infrule Cut]{C\To A}{\infer[\infrule Cut]{C\To D}{\infer[\infruler{(ii)}]{C\To B}{}&\infer[\infruler{(iii)}]{B\To D}{}}&
\infer[\infruler{(iv)}]{D\To A}{}}}
$$

$$
\infer[\infrule LR\mbox{-}E]{\O C,\g_2\To\d_2',\O B}{\infer[\infruler{(ii)}]{C\To B}{}&\infer[\infrule Cut]{B\To C}{\infer[\infruler{(iii)}]{B\To D}{}&\infer[\infrule Cut]{D\To C}{\infer[\infruler{(iv)}]{D\To A}{}&\infer[\infruler{(i)}]{A\To C}{}}}}
$$
%\begin{center}\begin{tabular}{ccc}
%{\scriptsize\begin{prooftree}
%A\To C
%	\[\[
%	C\To B\qquad B\To D
%	\justifies
%	C\To D
%	\using Cut\]
%	D\To A
%	\justifies
%	C\To A
%	\using Cut\quad\]
%
%\justifies
%\O A,\g_1'\To\d_1,\O C
%\using LR\textrm{-}E\qquad
%\end{prooftree}}&%\\\noalign{\smallskip\smallskip\smallskip\smallskip\smallskip\smallskip}
%{\scriptsize\begin{prooftree}
%C\To B
%	\[
%	B\To D
%		\[
%		D\To A\qquad A\To C
%		\justifies
%		D\To C
%		\using Cut\]
%	\justifies
%	B\To C
%	\using Cut\]
%\justifies
%\O C,\g_2\To\d'_2,\O B
%\using LR\textrm{-}E
%\end{prooftree}}
%\end{tabular}\end{center}

\noindent  In case {\bf(3)} we have 

$$
\framebox{\infer[\infrule LR\mbox{-}E]{\<\g_1\To\d_1',\O B\;\stackrel{\P C}{||}\;\O A,\g'_2\To\d_2\>}{ \<\To B\;\stackrel{C}{||}\;A\To \> &  \<\To A\;\stackrel{D}{||}\;B\To \>}
}$$
% \begin{center} \begin{prooftree}
% \<\To B\;\stackrel{C}{||}\;A\To \> \qquad  \<\To A\;\stackrel{D}{||}\;B\To \>
% \justifies
%\<\g_1\To\d_1',\O B\;\stackrel{\P D}{||}\;\O A,\g'_2\To\d_2\>
% \using LR\textrm{-}E
%  \end{prooftree} 
%  \end{center}
\noindent  By IH, there are $C$ and $D$ that are {\bf G3E(ND)}-interpolants of the partitions of the premisses. Thus (i) $\vdash \To B,C\;$ (ii) $\vdash C,A\To\;$ (iii) $\vdash \To A,D\;$ and (iv) $\vdash D,B\To\;$. We prove that $\P C$  is a {\bf G3E(ND)}-interpolant of the (given partition of the) conclusion as follows:

$$
\infer[\infrule R\neg]{\g_1\To\d_1',\O B,\neg\O\neg C}{\infer[\infrule LR\mbox{-}E]{\O\neg C,\g_1\To\d_1',\O B}{
\infer[\infrule L\neg]{\neg C\To B}{\infer[\infruler{(i)}]{\To B,C}{}}&
\infer[\infrule R\neg]{B\To\neg C}{\infer[\infrule Cut]{B,C\To}{
\infer[\infrule Cut]{C\To D}{\infer[\infruler{(iii)}]{\To D,A}{}&\infer[\infruler{(ii)}]{A,C\To}{}}&
\infer[\infruler{(iv)}]{D,B\To}{}}}}}
$$

$$
\infer[\infrule L\neg]{\neg\O\neg C,\O A,\g_2'\To\d_2}{\infer[\infrule LR\mbox{-}E]{\O A,\g_2'\To\d_2,\O\neg C}{\infer[\infrule R\neg]{A\To \neg C}{\infer[\infruler{(ii)}]{C,A\To}{}}&
\infer[\infrule L\neg]{\neg C\To A}{\infer[\infrule Cut]{\To A,C}{
\infer[\infruler{(iii)}]{\To A,D}{}&
\infer[\infrule Cut]{D\To C}{\infer[\infruler{(ii)}]{\To C,B}{}&\infer[\infruler{(iv)}]{B,D\To}{}}}}}}
$$
%\begin{center}\begin{tabular}{ccc}
%\vspace{1cm}
%{\tiny\begin{prooftree}
%\[\[\[
%\To A,D
%	\[
%\To B,C\qquad C,A\To
%\justifies
%A\To B
%\using Cut\]
%\justifies
%\To B,D
%\using Cut\]
%\justifies
%\neg D\To B
%\using L\neg\]
%\[
%D,B\To
%\justifies
%B\To\neg D
%\using R\neg\]
%		\justifies
%\O\neg D,\g_1\To\d_1',\O B
%\using LR\textrm{-}E\]
%\justifies
%\g_1\To\d_1',\O B,\neg\O\neg D
%\using R\neg
%\end{prooftree}}&$\quad$&%\noalign{\smallskip\smallskip\smallskip\smallskip\smallskip\smallskip}
%{\tiny\begin{prooftree}
%\[\[
%
%		\[
%		\[
%		\To B,C\qquad C,A\To
%		\justifies
%		A\To B
%		\using Cut\]
%			 B,D\To
%			\using R\neg
%		\justifies
%		A,D\To
%		\using Cut\]
%	\justifies
%	A\To\neg D
%	\using R\neg\]
%				\[
%				\To A,D
%				\justifies
%				\neg D\To A
%				\using L\neg\]
%\justifies
%\O A,\g_2'\To\d_2,\O\neg D
%\using LR\textrm{-}E\]
%\justifies
%\neg\O\neg D,\O A,\g_2'\To\d_2
%\using L\neg
%\end{prooftree}}
%\end{tabular}\end{center}
%
\noindent  In case {\bf(4)} we have 

$$
\framebox{
\infer[\infrule LR-E]{\<\g_1\To\d_1\;\stackrel{C}{||}\;\O A,\g'_2\To\d'_2,\O B\>}{\<\To \;\stackrel{C}{||}\;A\To B\>&\<\To \;\stackrel{D}{||}\;B\To A \>}
}
$$
% \begin{center} \begin{prooftree}
% \<\To \;\stackrel{\top}{||}\;A\To B\> \qquad  \<\To \;\stackrel{\top}{||}\;B\To A \>
% \justifies
%\<\g_1\To\d_1\;\stackrel{\top}{||}\;\O A,\g'_2\To\d'_2,\O B\>
% \using LR\textrm{-}E
%  \end{prooftree} \end{center}
%  
 \noindent By IH, there are  {\bf G3E(ND)}-interpolants $C$ and $D$  of the partitions of the premisses. Thus (i) $\vdash \To C\;$ (ii) $\vdash C,A\To B\;$ (iii) $\vdash \To D\;$ and (iv) $\vdash D,B\To A\;$.  Since the common language of the partitions of the premisses is empty,  no propositional variable  occurs in $C$ nor in $D$. We  show that $C$ is a {\bf G3E(ND)}-interpolant of the partition under consideration as follows (as in case {\bf (1)}, $A\To B$ and $B\To A$, being the premisses of the instance of $LR$-$E$ under consideration, are derivable):
 
$$
\infer[\infrule LWs+RWs]{\g_1\To\d_1,C}{\infer[\infruler{(i)} ]{\To C}{}}
\qquad
\infer[\infrule LR\mbox{-}E]{C,\O A,\g_2'\To\d_2',\O B}{\infer{A\To B}{}&\infer{B\To A}{}}
$$
%\begin{center}\begin{tabular}{ccc}
%\begin{prooftree}
%\[{}
%\justifies 
%\bot,\g_1\To \d_1,\bot
%\using L\bot\]
%\justifies
%\g_1\To\d_1,\bot\to\bot
%\using R\to\qquad
%\end{prooftree}&
%\begin{prooftree}
%\[
%\top,A\To B
%\justifies
%A\To B
%\using L\top\]
%	\[
%	\top,B\To A
%	\justifies
%	B\To A
%	\using L\top\]
%\justifies
%\top,\O A,\g_2'\To\d_2',\O B
%\using LR\textrm{-}E
%\end{prooftree}
%\end{tabular}\end{center}

%%%%%%%%%%%%           LR-M
\noindent\textbullet$\quad$ {\bf LR-M}$)\quad$ If the last rule applied in $\de$ is 

$$
\infer[\infrule LR\mbox{-}M]{\O A,\g\To\d,\O B}{A\To B}
$$
%\begin{center} \begin{prooftree}
% A\To B
% \justifies
% \O A,\g\To\d,\O B
% \using LR\textrm{-}M
%  \end{prooftree} 
%  \end{center}
%  \noindent we have  four kinds of partitions of the conclusion:
%
%\begin{center}\begin{tabular}{ccc}
%$(1)\;\<\O A,\g_1'\To\d_1',\O B\;||\;\g_2\To\d_2\>\qquad$&$(2)\;\<\O A,\g_1'\To\d_1\;||\;\g_2\To\d'_2,\O B\>$\\\noalign{\smallskip\smallskip}
%$(3)\;\<\g_1\To\d_1',\O B\;||\;\O A,\g'_2\To\d_2\>\qquad$&$(4)\;\<\g_1\To\d_1\;||\;\O A,\g'_2\To\d'_2,\O B\>$
%\end{tabular}\end{center}

\noindent we give directly the {\bf G3M(ND)}-interpolants of the possible  partitions  of the conclusion (and of the appropriate partition of the  premiss). The proofs are parallel to those for $LR$-$E$. %\vspace{0.3cm}

$$
\framebox{ \infer[\infrule LR\mbox{-}M]{\<\O A,\g_1'\To\d_1',\O B\;\stackrel{C}{||}\;\g_2\To\d_2\>}{\<A\To B\;\stackrel{C}{||}\;\To\> } 
\qquad
 \infer[\infrule LR\mbox{-}M]{\<\O A,\g_1'\To\d_1\;\stackrel{\O C}{||}\;\g_2\To\d'_2,\O B\>}{\<A\To \;\stackrel{C}{||}\;\To B\>}
 }
$$
$$\framebox{
\infer[\infrule LR\mbox{-}M]{\<\g_1\To\d_1',\O B\;\stackrel{\P C}{||}\;\O A,\g'_2\To\d_2\>}{\<\To B\;\stackrel{C}{||}\;A\To \>} 
\qquad
\infer[\infrule LR\mbox{-}M]{\<\g_1\To\d_1\;\stackrel{C}{||}\;\O A,\g'_2\To\d'_2,\O B\>}{ \<\To \;\stackrel{C}{||}\;A\To B\> }
}$$
\noindent\textbullet$\quad$ {\bf LR-R}$)\quad$ If the last rule applied in $\de$ is 

$$
\infer[\infrule LR\mbox{-}R]{\O A,\O \Pi,\g\To\d,\O B}{A,\Pi\To B}
$$
%\begin{center} \begin{prooftree}
% A,\Pi\To B
% \justifies
%\O A,\O\Pi, \g\To\d,\O B
% \using LR\textrm{-}R
%  \end{prooftree} 
%  \end{center}
  \noindent we have  four kinds of partitions of the conclusion:

\begin{center}\begin{tabular}{ll}
$(1)\qquad$&$\<\O A,\O\Pi_1,\g_1'\To\d_1',\O B\;||\;\O\Pi_2,\g'_2\To\d_2\>\quad$\\\noalign{\smallskip\smallskip}
$(2)$&$\<\O A,\O\Pi_1,\g_1'\To\d_1\;||\;\O\Pi_2,\g'_2\To\d'_2,\O B\>$\\\noalign{\smallskip\smallskip}
$(3)$&$\<\O\Pi_1,\g'_1\To\d_1',\O B\;||\;\O A,\O\Pi_2,\g'_2\To\d_2\>\quad$\\\noalign{\smallskip\smallskip}
$(4)$&$\<\O\Pi_1,\g'_1\To\d_1\;||\;\O A,\O\Pi_2,\g'_2\To\d'_2,\O B\>$
\end{tabular}\end{center}

%\noindent In each case we have to choose a suitable split of the premiss that allows to construct an interpolant for that split of  the conclusion.

 In case {\bf(1)} we have two subcases according to whether $\Pi_2$ is empty or not. If it is not empty we have 
% \begin{center} \begin{prooftree}
% A,\Pi_1||\;\stackrel{\bot}\To B||
% \justifies
%\O A,\O\Pi_1,\g_1'||\O\Pi_2,\g'_2\stackrel{\bot}\To\O B,\d_1'||\d_2
% \using LR\textrm{-}R
%  \end{prooftree} 
%  \end{center}
%  
%Because, by IH, we have that $\vdash A,\Pi_1\To B,C$ and $\vdash C\To$, therefore $C\equiv \bot$ and, we derive the needed sequents as follows:
%
%\begin{center}\begin{tabular}{ccc}
%\begin{prooftree}
%\[
%A,\Pi_1\To B,\bot
%\justifies 
%A,\Pi_1\To B
%\using R\bot\]
%\justifies
%\O A,\O\Pi_1,\g_1'\To\O B,\d_1',\bot
%\using LR\textrm{-}R\qquad
%\end{prooftree}&
%\begin{prooftree}
%\bot\To
%\Justifies
%\bot,\Pi_2,\g'_2\To\d_2
%\using LWs\textrm{ and }RWs
%\end{prooftree}
%\end{tabular}\end{center}

%
$$
\framebox{\infer[\infrule LR\mbox{-}R]{\<\O A,\O\Pi_1,\g_1'\To \d_1',\O B\;\stackrel{\P C}{||}\; \O\Pi_2,\g_2'\To\d_2\> }{\<A,\Pi_1\To B\;\stackrel{C}{||}\; \Pi_2\To\;\> }
}$$
% \begin{center} \begin{prooftree}
%\<A,\Pi_1\To B\;\stackrel{C}{||}\; \Pi_2\To\;\> 
% \justifies
%\<\O A,\O\Pi_1,\g_1'\To \d_1',\O B\;\stackrel{\P C}{||}\; \O\Pi_2,\g_2'\To\d_2\> 
% \using LR\textrm{-}R
%  \end{prooftree} 
%  \end{center}
  
\noindent  By IH, there is  a {\bf G3R(D$^\star$)}-interpolant $C$ of the chosen partition of the premiss. Thus (i) $\vdash A,\Pi_1\To B,C$ and (ii) $\vdash C,\Pi_2\To$, and we have the following derivations

$$
\infer[\infrule R\neg]{\O A,\O\Pi_1,\g_1'\To\d_1',\O B,\neg\O\neg C}{\infer[\infrule LR\mbox{-}R]{\O\neg C,\O A,\O\Pi_1,\g_1'\To\d_1',\O B}{\infer[\infrule L\neg]{\neg C,A,\Pi_1\To B}{\infer[\infruler{(i)}]{A,\Pi_1\To B,C}{}}}}
\qquad
\infer[\infrule L\neg]{\neg\O\neg C,\O\Pi_2,\g'_2\To\d_2}{\infer[\infrule LR\mbox{-}R]{\O\Pi_2,\g'_2\To\d_2,\O \neg C}{\infer[\infrule R\neg]{\Pi_2\To \neg C}{\infer[\infruler{(ii)}]{C,\Pi_2\To}{}}}}
$$
%\begin{center}\begin{tabular}{ccc}
%\begin{prooftree}
%\[\[
%A,\Pi_1\To B,C
%\justifies
%\neg C,A,\Pi_1\To B
%\using L\neg\]
%\justifies
%\O\neg C,\O A,\O\Pi_1,\g_1'\To\d_1',\O B
%\using LR\textrm{-}R\]
%\justifies
%\O A,\O\Pi_1,\g_1'\To\d_1',\O B,\neg\O\neg C
%\using R\neg\qquad
%\end{prooftree}&
%\begin{prooftree}
%\[\[
%C,\Pi_2\To
%\justifies
%\Pi_2\To \neg C
%\using R\neg\]
%\justifies
%\O\Pi_2,\g'_2\To\d_2,\O \neg C
%\using LR\textrm{-}R\]
%\justifies
%\neg\O\neg C,\O\Pi_2,\g'_2\To\d_2
%\using L\neg
%\end{prooftree}
%\end{tabular}\end{center}

\noindent When $\Pi_2$ (and $\O \Pi_2$)  is empty we cannot proceed as above since we cannot apply $LR$-$R$  in the right derivation. But in this case, reasoning like in case {\bf (1)} for rule $LR$-$E$, we can show that

$$
\framebox{\infer[\infrule LR\mbox{-}R]{\<\O A,\O\Pi_1,\g_1'\To \d_1',\O B\;\stackrel{C}{||}\; \g_2'\To\d_2\> }{\<A,\Pi_1\To B\;\stackrel{C}{||}\; \To\;\> }
}$$%Here we show that $\bot$ is also the {\bf G3R(D$^\star$)}-interpolant of the conclusion:

%\begin{center}\begin{tabular}{ccc}
%\begin{prooftree}
%\[
%A,\Pi_1\To B,\bot
%\justifies 
%A,\Pi_1\To B
%\using R\bot\]
%\justifies
%\O A,\O\Pi_1,\g_1'\To\O B,\d_1',\bot
%\using LR\textrm{-}R\qquad
%\end{prooftree}&
%\begin{prooftree}
%{}
%\justifies
%\bot,\g'_2\To\d_2
%\using L\bot\end{prooftree}
%\end{tabular}\end{center}

\noindent Cases {\bf(2)} and {\bf(3)} are similar to  the corresponding cases for rule $LR$-$E$:
{\footnotesize$$
\framebox{\infer[\infrule LR\mbox{-}R]{\<\O A,\O\Pi_1,\g'_1\To\d_1\;\stackrel{\O C}{||}\;\O\Pi_2,\g'_2\To\d'_2,\O B\>}{\< A,\Pi_1\To\;\stackrel{C}{||}\;\Pi_2\To B\>}\quad
%}$$
%% \begin{center}(2)\hspace{2cm} \begin{prooftree}
%%\< A,\Pi_1\To\;\stackrel{C}{||}\;\Pi_2\To B\>
%% \justifies
%%\<\O A,\O\Pi_1,\g'_1\To\d_1\;\stackrel{\O C}{||}\;\O\Pi_2,\g'_2\To\d'_2,\O B\>
%% \using LR\textrm{-}R
%%  \end{prooftree} \vspace{0.3cm}
%
%  
%%Because, by IH, we have that $\vdash A,\Pi_1\To C$ and $\vdash C,\Pi_2\To B$ and we derive the needed sequents as follows:
%%
%%\begin{center}\begin{tabular}{ccc}
%%\begin{prooftree}
%%A,\Pi_1\To C
%%\justifies
%%\O A,\O\Pi_1,\g_1'\To\d_1,\O C
%%\using LR\textrm{-}R\qquad
%%\end{prooftree}&
%%\begin{prooftree}
%%C,\Pi_2\To B
%%\justifies
%%\O C,\O\Pi_2,\g'_2\To\d'_2,\O B
%%\using LR\textrm{-}R
%%\end{prooftree}
%%\end{tabular}\end{center}
%%
%%\noindent  In case (3) we have 
%$$
%\framebox{
\infer[\infrule LR\mbox{-}R]{\<\O\Pi_1,\g'_1\To\d'_1\,\O B\;\stackrel{\P C}{||}\;\O A,\O\Pi_2,\g'_2\To\d_2\>}{\<\Pi_1\To B\;\stackrel{C}{||}\; A,\Pi_2\To\;\>}
}$$}
%(3)\hspace{2cm} \begin{prooftree}
%\<\Pi_1\To B\;\stackrel{C}{||}\; A,\Pi_2\To\;\>
% \justifies
%\<\O\Pi_1,\g'_1\To\d'_1\,\O B\;\stackrel{\P C}{||}\;\O A,\O\Pi_2,\g'_2\To\d_2\>
% \using LR\textrm{-}R
%  \end{prooftree} 
%   \end{center}

 In case {\bf(4)} we have two subcases according to whether $\Pi_1$ is empty or not:
{\small$$
\framebox{\infer[\infrule LR\mbox{-}R]{\<\g'_1\To\d_1\;\stackrel{C}{||} \O A,\O\Pi_2,\g_2'\To\d_2',\O B\>}{\<\;\To\;\stackrel{C}{||}A,\Pi_2\To B\>}\quad
\infer[\infrule LR\mbox{-}R]{\<\O\Pi_1,\g'_1\To\d_1\;\stackrel{\O C}{||} \O A,\O\Pi_2,\g_2'\To\d_2',\O B\>}{\<\Pi_1\To\;\stackrel{C}{||}A,\Pi_2\To B\>}
}$$ }
%Whereas, if it empty we have
% $$
% \framebox{
% }$$
The proofs are similar to those for case {\bf (1)}.
% \begin{center} \begin{prooftree}
%\<\Pi_1\To\;\stackrel{C}{||}A,\Pi_2\To B\>
% \justifies
%\<\O\Pi_1,\g'_1\To\d_1\;\stackrel{\O C}{||} \O A,\O\Pi_2,\g_2'\To\d_2',\O B\>
% \using LR\textrm{-}R
%  \end{prooftree} 
%  \end{center}
%  
%\noindent By IH, there is some $C$ that is  a {\bf G3R(D$^\bot$)}-interpolant of the partition of the premiss. Thus \\$\vdash \Pi_1 \To C$ and $\vdash C,A,\Pi_2\To B$. We can show that $\O C$ is a {\bf G3R(D$^\bot$)}-interpolant of the conclusion by applying $LR$-$R$ to these two derivable sequents.
%
%When $\Pi_1$ is empty, the {\bf G3R(D$^\bot$)}-interpolant of both the premiss and  the conclusion is $\top$, i.e.
%
%
% \begin{center} \begin{prooftree}
%\<\;\To\;\stackrel{\top}{||}A,\Pi_2\To B\>
% \justifies
%\<\g'_1\To\d_1\;\stackrel{\top}{||} \O A,\O\Pi_2,\g_2'\To\d_2',\O B\>
% \using LR\textrm{-}R
%  \end{prooftree} 
%  \end{center}

%%%%%%%%%%%%  LR-K
\noindent\textbullet$\quad$ {\bf LR-K}$)\quad$ If the last rule applied in $\de$ is 

$$
\infer[\infrule LR\mbox{-}K]{\O \Pi,\g\To\d,\O B}{\Pi\To B}
$$
\noindent we give directly the {\bf G3K(D)}-interpolants of the two  possible  partitions  of the conclusion:

%\begin{center} \begin{prooftree}
% \Pi\To B
% \justifies
% \O \Pi,\g\To\d,\O B
% \using LR\textrm{-}K
%  \end{prooftree} 
%  \end{center}
%  \noindent we have  two kinds of partitions of the conclusion:
%
%\begin{center}\begin{tabular}{l}
%$(1)\;\<\O\Pi_1,\g_1'\To\d_1\;{||}\;\O\Pi_2,\g_2'\To\d_2',\O B\>$\\\noalign{\smallskip\smallskip}
%$(2)\;\<\O\Pi_1,\g_1'\To\d_1',\O B\;{||}\;\O\Pi_2,\g_2'\To\d_2\>$
%\end{tabular}\end{center}
%
%Case {\bf(1)} is analogous to case (2) of $LR$-$R$:
{\small$$
\framebox{\infer[\infrule LR\mbox{-}K]{\<\O\Pi_1,\g_1'\To\d_1\;\stackrel{\O C}{||}\;\O\Pi_2,\g_2'\To\d_2',\O B\>}{\<\Pi_1\To\;\stackrel{C}{||}\;\Pi_2\To B\>}\qquad
\infer[\infrule LR\mbox{-}K]{\<\O\Pi_1,\g_1'\To\d_1',\O B\;\stackrel{\P C}{||}\;\O\Pi_2,\g_2'\To\d_2\>}{\< \Pi_1\To B\;\stackrel{C}{||}\;\Pi_2\To\;\>}
}$$}
The proofs are, respectively, parallel to those for cases {\bf (2)} and {\bf(3)} of $LR$-$E$ (when $\Pi=\emptyset$, we can proceed as for rule $R$-$N$ and use $C$ instead of $\O C$ and of $\P C$, respectively).
% \begin{center} \begin{prooftree}
%\<\Pi_1\To\;\stackrel{C}{||}\;\Pi_2\To B\>
% \justifies
%\<\O\Pi_1,\g_1'\To\d_1\;\stackrel{\O C}{||}\;\O\Pi_2,\g_2'\To\d_2',\O B\>
% \using LR\textrm{-}K
%  \end{prooftree} 
%  \end{center}

% Case {\bf(2)} is analogous to case (3) of $LR$-$R$:
% \begin{center} \begin{prooftree}
%\< \Pi_1\To B\;\stackrel{C}{||}\;\Pi_2\To\;\>
% \justifies
%\<\O\Pi_1,\g_1'\To\d_1',\O B\;\stackrel{\P C}{||}\;\O\Pi_2,\g_2'\To\d_2\>
% \using LR\textrm{-}K
%  \end{prooftree} 
%  \end{center}

%%%%%%%%%%%%  LD
\noindent\textbullet$\quad$ {\bf L-D}$^\bot)\quad$ If the last rule applied in $\de$ is 

\begin{center} \begin{prooftree}
 A\To 
 \justifies
 \O A,\g\To\d
 \using \infrule{L\mbox{-}D^\bot}
  \end{prooftree} 
  \end{center}
  \noindent we have  two kinds of partitions of the conclusion, whose {\bf G3XD$^\bot$}-interpolants are, respectively:

%\begin{center}\begin{tabular}{ccc}
%$(1)\;\<\O A,\g_1'\To\d_1\;{||}\;\g_2\To\d_2\>\qquad$&$\qquad(2)\;\<\g_1\To\d_1\;{||}\; \O A,\g_2'\To\d_2\>$
%\end{tabular}\end{center}
%
% In case {\bf(1)} we have 
 \begin{center}\framebox{ \begin{prooftree}
\<A\To\;\stackrel{C}{||}\;\To\;\>
 \justifies
\<\O A,\g_1'\To\d_1\;\stackrel{C}{||}\;\g_2\To\d_2\>
 \using \infrule{L\mbox{-}D^\bot}
  \end{prooftree}\qquad
   \begin{prooftree}
\<\;\To\;\stackrel{C}{||}\; A\To\;\>
 \justifies
\<\g_1\To\d_1\;\stackrel{C}{||}\; \O A,\g_2'\To\d_2\>
 \using \infrule{L\mbox{-}D^\bot}
  \end{prooftree} }
  \end{center}
  
%\noindent By IH, there is a {\bf G3x(N)D$^\bot$}-interpolant $C$ of the partition of the premiss. Thus   $\vdash A\To C$ and $\vdash C\To$. The common language is empty, therefore $C\equiv \bot$. Here is a proof that $\bot$ is also a {\bf G3x(N)D$^\bot$}-interpolant of the conclusion
%\begin{center}\begin{tabular}{ccc}
%\begin{prooftree}
%\[
%A\To \bot
%\justifies 
%A\To 
%\using R\bot\]
%\justifies
%\O A,\g_1'\To\d_1,\bot
%\using \infrule{L\mbox{-}D^\bot}\qquad
%\end{prooftree}&
%\begin{prooftree}
%{}
%\justifies
%\bot,\g_2\To\d_2
%\using L\bot
%\end{prooftree}
%\end{tabular}\end{center}
%
% In case {\bf(2)} we have 
% \begin{center}\framebox{ \begin{prooftree}
%\<\;\To\;\stackrel{\top}{||}\; A\To\;\>
% \justifies
%\<\g_1\To\d_1\;\stackrel{\top}{||}\; \O A,\g_2'\To\d_2\>
% \using \infrule{L\mbox{-}D^\bot}
%  \end{prooftree} }
%  \end{center}
%  
%\noindent By IH we have that, for some $C$, $\vdash \To C$ and $\vdash C,A\To $. Since the common language is empty, it follows that $C\equiv \top$. Here we prove that $\top$ is a {\bf G3x(N)D$^\bot$}-interpolant of the conclusion
%
%\begin{center}\begin{tabular}{ccc}
%\begin{prooftree}
%\bot,\g_1\To\d_1\bot
%\justifies 
%\g_1\To \d_1,\bot\to\bot
%\using \infrule{R\to}\qquad
%\end{prooftree}&
%\begin{prooftree}
%\[
%\top,A\To 
%\justifies
%A\To 
%\using L\top\]
%\justifies
%\top,\O A,\g_2'\To\d_2
%\using \infrule{L\mbox{-}D^\bot}
%\end{prooftree}
%\end{tabular}\end{center}

%%%%%%%%%%%%           L-D^\P
\noindent\textbullet$\quad$ {\bf L-D$^\P$}$)\quad$ If the last rule applied in $\de$ is 

$$
\infer[\infrule L\mbox{-}D^{\P_E}]{\O A,\O B,\g\To \d}{A,B\To\qquad&\To A,B }\qquad
\textnormal{or}\qquad
\infer[\infrule L\mbox{-}D^{\P_M}]{\O A,\O B,\g\To \d}{A,B\To}
$$
%\begin{center} \begin{prooftree}
% A,\Pi\To B
% \justifies
%\O A,\O\Pi, \g\To\d,\O B
% \using LR\textrm{-}R
%  \end{prooftree} 
%  \end{center}
  \noindent we have  three kinds of partitions of the conclusion:

\begin{center}\begin{tabular}{ll}
$(1)\qquad$&$\<\O A,\O B,\g'_1\To\d_1\;||\;\g_2\To\d_2\>\quad$\\\noalign{\smallskip\smallskip}
$(2)$&$\<\g_1\To\d_1\;||\;\O A,\O B,\g'_2\To\d_2\>$\\\noalign{\smallskip\smallskip}
$(3)$&$\<\O A,\g'_1\To\d_1\;||\;\O B,\g'_2\To\d_2\>\quad$\\\noalign{\smallskip\smallskip}
%$(4)$&$\<\O\Pi_1,\g'_1\To\d_1\;||\;\O A,\O\Pi_2,\g'_2\To\d'_2,\O B\>$
\end{tabular}\end{center}

 In cases {\bf (1)} and {\bf (2)} we have, respectively (omitting the right premiss for $L$-$D^{\P_M}$):
 
 $$
\framebox{\infer[\infrule L\mbox{-}D^\P]{\<\O A,\O B,\g_1'\To \d_1\;\stackrel{ C}{||}\; \g_2\To\d_2\> }{\<A,B\To \;\stackrel{C}{||}\; \To\;\> \qquad \<\To \;\stackrel{D}{||}\; \To \;A,B\>}
%}$$
%and
% $$
%\framebox{
\infer[\infrule L\mbox{-}D^\P]{\<\g_1\To \d_1\;\stackrel{ C}{||}\; \O A,\O B,\g'_2\To\d_2\> }{\<A,B\To \;\stackrel{C}{||}\; \To\;\> \qquad \<\To \;\stackrel{D}{||}\; \To \;A,B\>}
}$$

Finally, in case {\bf (3)} we have:

 $$
\framebox{\infer[\infrule L\mbox{-}D^\P]{\<\O A,\g_1'\To \d_1\;\stackrel{ \O C}{||}\; \O B,\g'_2\To\d_2\> }{\<A\To \;\stackrel{C}{||}\; B \To\;\> \qquad \<\To \; A\stackrel{D}{||}\; \To \;B\>}
}$$
By IH, we can assume that $C$ is an interpolant of the partition of the left premiss and $D$ of the right one.
We have the following {\bf G3YD$^\P$}-derivations ({\bf Y} $\in\{$ {\bf E,M}$\}$):

$$
\infer[\infrule LR\mbox{-}E]{\O A,\g_1'\To\d_1,\O C}{\infer[\infrule IH]{A\To C}{}&
\infer[\infrule Cut]{C\To A}{\infer[\infrule IH]{\To A,D}{}&
\infer[\infrule Cut]{D,C\To}{
\infer[\infrule IH]{D\To B}{}&\infer[\infrule IH]{B,C\To}{}
}}}
$$

$$
\infer[\infrule L\mbox{-}D^{\P_{E}}]{\O C,\O B,\g_2'\To\d_2}{
\infer[\infrule IH]{C,B\To}{}&\infer[\infrule Cut]{\To C,B}{
\infer[\infrule Cut]{\To C,D}{
\infer[\infrule IH]{\To D,A}{}&
\infer[\infrule IH]{A\To C}{}}&
\infer[\infrule IH]{D\To B}{}}
}
$$
It is also immediate to notice that  $\O C$ satisfies the language condition for being a {\bf G3YD$^\P$}-interpolant of the conclusion since, by IH, we know that each propositional variable occurring in $C$ occurs   in $A\cap B$.

%%%%%%%%%%%%  LD
\noindent\textbullet$\quad$ {\bf L-D$^\star$}$)\quad$ If the last rule applied in $\de$ is 

\begin{center} \begin{prooftree}
 \Pi\To 
 \justifies
 \O\Pi, \g\To\d
 \using \infrule{L\mbox{-}D^\star}
  \end{prooftree} 
  \end{center}
  \noindent we have  the following kind of  partition:
\quad$\< \O\Pi_1,\g'_1\To\d_1\;{||}\;\O\Pi_2,\g'_2\To\d_2\>$

If $\Pi_1$ is not empty we have: 
\begin{center} \framebox{\begin{prooftree}
\<\Pi_1\To\;\stackrel{C}{||}\;\Pi_2\To\;\>
 \justifies
\< \O\Pi_1,\g'_1\To\d_1\;\stackrel{\O C}{||}\;\O\Pi_2,\g'_2\To\d_2\>
 \using  \infrule{L\mbox{-}D^\star}
  \end{prooftree} }
  \end{center}

\noindent By IH, there is some  $C$ that is  an interpolant of the premiss. It holds  that $\vdash \Pi_1\To C$ and $\vdash C,\Pi_2\To\;$. We  show that $\O C$ is a {\bf G3YD}-interpolant ({\bf Y} $\in\{${\bf R,K}$\}$) of the partition of the conclusion as follows: %(observe that, with minor changes, we can show that also $\P C$ is a {\bf G3YD$^*$}-interpolant of that partition):

$$
\infer[\infrule LR\mbox{-}Y]{\O\Pi_1,\g_1'\To\d_1,\O C}{\infer[\infruler{IH}]{\Pi_1\To C}{}}
\qquad
\infer[\infrule L\mbox{-}D^*]{\O C,\O \Pi_2,\g_2'\To\d_2}{\infer[\infruler{IH}]{C,\Pi_2\To}{}}
$$
If, instead, $\Pi_1$ is empty then $\Pi_2$ cannot be empty and we have:% For {\bf G3RD$^*$} we can proceed as for {\bf G3KD$^*$} whenever $\Pi_1$ is not empty, whereas if it is empty, we have:
\begin{center} \framebox{\begin{prooftree}
\<\;\To\;\stackrel{C}{||}\;\Pi_2\To\;\>
 \justifies
\< \g_1\To\d_1\;\stackrel{\P C}{||}\;\O\Pi_2,\g'_2\To\d_2\>
 \using \infrule{ L\mbox{-}D^\star}
  \end{prooftree} }
  \end{center} 
  
\noindent   By IH there is a formula $C$, containing no propositional variable, such that  $\vdash \;\To C$ and $\vdash C,\Pi_2\To\;$ . Thus, {\bf G3YD} $\vdash\g_1\To\d_1,\P C$ ($L$-$D^*$ makes $\To\P C$ derivable from $\To C$) and  {\bf G3YD} $\vdash\P\top,\O\Pi_2,\g_2'\To\d_2$ ($LR$-$Y$ makes  $\P C,\O\Pi_2\To$ derivable from $C,\Pi_2\To$ when $\Pi_2\neq \emptyset$).
%, and we have the following derivations: %(observe that also $\top$ is an interpolant of the partition of the conclusion)
%  
%  \begin{center}\begin{tabular}{ccc}
%\begin{prooftree}
%\[\[
%\To\top
%\justifies 
%\neg\top\To
%\using \infrule{L\neg}\]
%\justifies
%\O\neg\top,\g_1\To\d_1
%\using \infrule{ L\mbox{-}D^\star}\qquad\]
%\justifies
%\g_1\To\d_1,\neg\O\neg\top
%\using \infrule{R\neg}
%\end{prooftree}&
%\begin{prooftree}
%\[\[
%\top,\Pi_2\To 
%\justifies
%\Pi_2\To\neg\top 
%\using \infrule{R\neg}\]
%\justifies
%\O\Pi_2,\g_2\To\d_2,\O\neg\top
%\using \infrule{LR\textrm{-}R}\]
%\justifies
%\neg\O\neg\top,\O\Pi_2,\g_2\To\d_2
%\using \infrule{L\neg}
%\end{prooftree}
%\end{tabular}\end{center}
%\noindent Notice that in the right derivation we know that $\Pi_2$ is not empty  and, therefore, that $LR$-$R$ is applicable, since otherwise $\Pi$ would be empty and we would have a subproof of $\de$ ending with the empty sequent. 

 %%%%%%%%%%%  R\N
 \noindent\textbullet$\quad$ {\bf R-N}$)\quad$ If the last rule applied in $\de$ is 

\begin{center} \begin{prooftree}
 \To A 
 \justifies
\g\To\d,\O A
 \using \infrule{R\mbox{-}N}
  \end{prooftree} 
  \end{center}
%  \noindent we have  two kinds of partitions of the conclusion:
%
%\begin{center}\begin{tabular}{ccc}
%$(1)\;\< \g_1\To\d_1',\O A\;{||}\;\g_2\To\d_2\>\qquad$&$\qquad(2)\;\< \g_1\To\d_1\;{||}\;\g_2\To\d_2', \O A\>$
%\end{tabular}\end{center}

\noindent The interpolants for the two possible partitions are:% (the proofs are similar to those for $L$-$D^\bot$):

\noindent\framebox{ \begin{tabular}{cccc}
 $(1)\;$& \begin{prooftree}
\<\;\To A\stackrel{\bot}{||}\;\To\;\>
 \justifies
\< \g_1\To\d_1',\O A\;\stackrel{\bot}{||}\;\g_2\To\d_2\>
 \using \infrule{R\mbox{-}N}\quad
  \end{prooftree} &

%Because, by IH, we have that $\vdash \To A,C$ and $\vdash C\To$, therefore $C\equiv \bot$ and, we derive the needed sequents as follows:
%
%\begin{center}\begin{tabular}{ccc}
%\begin{prooftree}
%\[
%\To A,\bot
%\justifies 
%\To A 
%\using R\bot\]
%\justifies
%\g_1\To\d_1',\O A,\bot
%\using RN\qquad
%\end{prooftree}&
%\begin{prooftree}
%\bot\To
%\Justifies
%\bot,\g_2\To\d_2
%\using LWs\textrm{ and }RWs
%\end{prooftree}
%\end{tabular}\end{center}

$(2)\;$& \begin{prooftree}
\<\;\To \;\stackrel{\top}{||}\;\To A\>
 \justifies
\< \g_1\To\d_1\;\stackrel{\top}{||}\;\g_2\To\d_2',\O A\>
 \using \infrule{R\mbox{-}N}
  \end{prooftree} 
  \end{tabular}}\vspace{0.3cm}

%Because, by IH, we have that $\vdash \To C$ and $\vdash C\To A$, therefore $C\equiv \top$ and, we derive the needed sequents as follows:
%
%\begin{center}\begin{tabular}{ccc}
%\begin{prooftree}
%\To\top 
%\Justifies 
%\g_1\To \d_1,\top
%\using RWs\textrm{ and }LWs\quad
%\end{prooftree}&
%\begin{prooftree}
%\[
%\top\To A 
%\justifies
%\To A 
%\using L\top\]
%\justifies
%\top,\g_2\To\d_2',\O A
%\using RN
%\end{prooftree}
%\end{tabular}\end{center}

\noindent This completes the proof.{}
\end{proof}

\begin{proof}[Proof of Theorem \ref{Craig}]
Assume that $A\to B$ is a theorem of {\bf X}. By Theorem \ref{comp} and Lemma \ref{inv} we have that {\bf G3X} $\vdash A\To B$. By Lemma \ref{Maehara}  (taking $A$ as $\g_1$ and  $B$ as $\d_2$ and $\g_2,\d_1$ empty) and Theorem \ref{comp} there exists  a formula $I$ that is an interpolant of $A\to B$ -- i.e. $I$ is such such that  all propositional variables occurring in $I$, if any, occur in both  $A$ and $B$ and such that  $A\to I$ and $I\to B$ are theorems of {\bf X}.{}
\end{proof}

Observe that the proof is constructive in that  Lemma \ref{Maehara} gives a procedure to  extract  an interpolant for $A\to B$ from a given derivation of $A\To B$. Furthermore the proof is purely proof-theoretic in that it makes no use of model-theoretic notions.

Craig's theorem is often -- e.g., in \cite{M61} for an extension of  classical   logic -- stated in the following stronger  version:
\begin{quote}
If $A\to B$ is a theorem of the logic {\bf X}, then 
\begin{enumerate}
\item If $A$ and $B$ share some propositional variable, there is a formula $I$, which contains  propositional variables common to $A$ and $B$ only, such that both $A\to I$ and $I\to B$ are theorems of {\bf X};
\item Else, either $\neg A$  or $B$ is a theorem of {\bf X}.
\end{enumerate}
\end{quote}

\noindent But the second condition  doesn't hold for   modal and deontic logics where at least one of $N:=\O\top$ and $D^\bot:=\P\top$ is not a theorem. %because even if any  formula that is constructed from $\bot$ and $\top$ by means of Boolean connective is logically equivalent to either $\bot$ or $\top$, an analogous  result doesn't always hold for formulas constructed from $\bot$ and $\top$ by using (also) deontic operators. 
To illustrate,  it holds that  $ \O\top\to\O\top$ is a theorem of {\bf E} and its interpolant is $\O\bot$ (see Figure \ref{fig}), but neither  $\neg \O\top$ nor $\O\top$ is a theorem of {\bf E}. Analogously, we have that $\O\bot\to\O\bot$ is a theorem of  {\bf E} and its interpolant is $\O\bot$ (see Figure \ref{fig}), but neither $\neg \O\bot$ nor $\O\bot$ is a  theorem of {\bf E}. These counterexamples work in all extensions of {\bf E} that don't have both $N$ and $D^\bot$ as theorems: to prove the stronger version of Craig's theorem we need $N$ and $D^\bot$, respectively.

\begin{figure}
%\begin{center}
%\begin{prooftree}
%\[
%\<p\to p\To\;\stackrel{\top}{||}\;q\To q\>
%\justifies
%\<p\to p\To\;\stackrel{\top}{||}\;\To q\to q\>
%\using \infrule{R\to}\]
%\[
%\<q\to q\To\;\stackrel{\top}{||}\;p\To p\>
%\justifies
%\<q\to q\To\;\stackrel{\top}{||}\;\To p\to p\>
%\using \infrule{R\to}
%\]
%\justifies
%\<\;\O(p\to p)\To\;\stackrel{\O\top}{||}\;\To\O(q\to q)\;\>
%\using \infrule{LR\textrm{-}E}
%\end{prooftree}
\scalebox{0.9900}{\infer[\infrule LR\mbox{-}E]{\< \;\O\top\To\;\stackrel{\O\top}{||}\;\To\O\top\;\>}{
\< \;\top\To\;\stackrel{\top}{||}\;\To\top\;\>& \< \;\top\To\;\stackrel{\top}{||}\;\To\top\;\>}\qquad
\infer[\infrule LR\mbox{-}E]{\< \;\O\bot\To\;\stackrel{\O\bot}{||}\;\To\O\bot\;\>}{
\< \;\bot\To\;\stackrel{\bot}{||}\;\To\bot\;\>& \< \;\bot\To\;\stackrel{\bot}{||}\;\To\bot\;\>}
}%\end{center}

\caption{Construction of an {\bf ED}-interpolant for   $\O\top\to\O\top$ and for $\O\bot\to\O\bot$}\label{fig}
\end{figure}

%\noindent We stress that for the logics where the stronger version of Craig's theorem doesn't holds, such as many of the deontic logics considered here,   if we want to prove the (simple)  interpolation theorem, we have to introduce $\bot$ and $\top$ as zero-ary logical symbols and not as abbreviations of notable formulas such as $p\wedge \neg p$ and $p\to p$, where $p$ is some chosen propositional variable.

Among the deontic logics considered here, the stronger version of Craig's theorem holds only for {\bf END$^{\bot(\P)}$},  {\bf MND$^{\bot(\P)}$}, and {\bf KD}, as shown by the following 

\begin{corollary}\label{cor} Let {\bf XD} be one of {\bf END$^{\bot(\P)}$},  {\bf MND$^{\bot(\P)}$}, and {\bf KD}. If $A\to B$ is a theorem of {\bf XD} and $A$ and $B$ share no propositional variable, then either $\neg A$ or $B$ is a theorem of {\bf XD}.
\end{corollary}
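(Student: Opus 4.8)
The plan is to exploit the interpolant already delivered by Theorem~\ref{Craig}, together with the one feature common to the three logics named: each of {\bf END$^{\bot(\P)}$}, {\bf MND$^{\bot(\P)}$}, and {\bf KD} has both $N:=\O\top$ and $D^\bot:=\neg\O\bot$ among its theorems. First I would apply Theorem~\ref{Craig} to the theorem $A\to B$, obtaining a formula $I$ with $\mathbf{XD}\vdash A\to I$ and $\mathbf{XD}\vdash I\to B$ whose propositional variables are common to $A$ and $B$. Since by assumption $A$ and $B$ share no propositional variable, $I$ is \emph{closed}: no propositional variable occurs in it, so it is built from $\bot$ using the connectives and $\O$ alone.

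The heart of the argument is the following lemma, which I would prove next: in any logic extending {\bf E} in which both $N$ and $D^\bot$ are theorems, every closed formula is provably equivalent either to $\top$ or to $\bot$. This goes by induction on the weight of the formula. The only atomic closed formula is $\bot$, which is equivalent to $\bot$; the connective cases $I_1\wedge I_2$, $I_1\lor I_2$, and $I_1\to I_2$ follow from the inductive hypothesis by classical propositional reasoning, since each immediate subformula already reduces to $\top$ or $\bot$. The decisive case is $I\equiv\O J$: by the inductive hypothesis $\mathbf{XD}\vdash J\leftrightarrow\top$ or $\mathbf{XD}\vdash J\leftrightarrow\bot$, whence rule $RE$ (available because the logic extends {\bf E}) gives $\mathbf{XD}\vdash\O J\leftrightarrow\O\top$ or $\mathbf{XD}\vdash\O J\leftrightarrow\O\bot$. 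In the first case axiom $N$ yields $\mathbf{XD}\vdash\O\top$, so $\O J$ is equivalent to $\top$; in the second case axiom $D^\bot$ yields $\mathbf{XD}\vdash\neg\O\bot$, so $\O J$ is equivalent to $\bot$.

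Finally I would combine the two. Applying the lemma to the closed interpolant $I$, either $\mathbf{XD}\vdash I\leftrightarrow\top$, and then $\mathbf{XD}\vdash I\to B$ gives $\mathbf{XD}\vdash B$; or $\mathbf{XD}\vdash I\leftrightarrow\bot$, and then $\mathbf{XD}\vdash A\to I$ gives $\mathbf{XD}\vdash A\to\bot$, i.e.\ $\mathbf{XD}\vdash\neg A$. Either way the conclusion follows, and it only remains to check that the three listed logics do contain $N$ and $D^\bot$ --- directly in the $D^\bot$-variants and in {\bf KD}, and, for the $D^\P$-variants, because $D^\bot$ is derivable from $D^\P$ in the presence of $N$. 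The sole genuine obstacle is the modal step of the lemma, and this is exactly where the hypothesis on the logic is indispensable: as the counterexamples in {\bf E} recorded just above the corollary show, $\O\top$ fails to reduce to $\top$ for want of $N$ and $\O\bot$ fails to reduce to $\bot$ for want of $D^\bot$, so the boxed closed formulas need not collapse and the stronger version genuinely fails once either axiom is dropped.
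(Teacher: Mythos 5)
Your proposal is correct and follows essentially the same route as the paper: extract the variable-free interpolant $I$ from Theorem \ref{Craig}, use the theoremhood of $N$ and $D^\bot$ (derived from $D^\P$ where needed) to collapse every closed formula to $\top$ or $\bot$ via the equivalences $\O\top\leftrightarrow\top$ and $\O\bot\leftrightarrow\bot$, and then read off {\bf XD} $\vdash B$ or {\bf XD} $\vdash\neg A$. The only difference is one of presentation: you spell out as an explicit induction on weight what the paper compresses into the single sentence that $I$, being built from $\bot$ and $\top$ by classical and deontic operators, is provably equivalent to $\bot$ or $\top$.
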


\begin{proof}
Suppose that {\bf XD} $\vdash A\to B$ and that $A$ and $B$ share no propositional variable, then the interpolant $I$  is constructed from $\bot $ and $\top$ by means of classical and deontic operators.  Whenever  $D^\bot$ and $N$ are theorems of {\bf XD}, we have that $\P\top\leftrightarrow \top$, $\O\top\leftrightarrow \top$,  $\P\bot\leftrightarrow \bot$, and $\O\bot\leftrightarrow \bot$ are theorems of {\bf XD}.  Hence, the interpolant $I$ is (equivalent to) either $\bot$ or $\top$. In the first case {\bf XD} $\vdash\neg A$ and in the second one {\bf XD} $\vdash B$.{}
\end{proof}

\noindent As noted in \cite[p. 298]{F83}, Corollary \ref{cor} is a Halld\'en-completeness  result. A logic {\bf X} is \emph{Halld\'en-complete} if, for every formulas $A$ and $B$ that share no propositional variable, {\bf X} $\vdash A\lor B$ if and only if {\bf X} $\vdash A$ or {\bf X} $\vdash B$. All the modal and deontic logics considered here, being based on classical logic, are such that $A\to B$ is equivalent to $\neg A \lor B$. Thus  the deontic logics considered in Corollary \ref{cor} are Halld\'en-complete, whereas all other non-normal logics for which we have proved interpolation  are Halld\'en-incomplete since they don't satisfy Corollary \ref{cor}.

\begin{example}[Maehara's lemma and rule $LR$-$C$]\label{prob}
 We have not been able to prove Maehara's Lemma \ref{Maehara} for rule $LR$-$C$ because of the cases where the principal formulas of the antecedent are splitted in the two elements of the partition. In particular, if we have two principal formulas in the antecedent, the problematic partitions  are (omitting the weakening contexts):
 \begin{center}
 (1)\quad $\<\O A_1\To\;||\;\O A_2\To \O B\>$\qquad\qquad
 (2)\quad $\<\O A_1\To\O B\;||\;\O A_2\To\>$
 \end{center}
To illustrate, an interpolant of the first partition would be a formula $I$ such that:

$$
 (i)\quad \vdash \O A_1\To I\qquad (ii)\quad\vdash I,\O A_2\To\O B\qquad(iii)\quad p\in I\textnormal{ only if }p\in (A_1)\cap(A_2,B)
$$
But we have not been able to find partitions of the premisses allowing to find such  $I$. More in details, for the first premiss it is natural to   consider the partition $\< A_1\To\;\stackrel{C}{||}\; A_2\To  B\>$ in order to find an $I$ that satisfies $(iii)$.  But, for any combination of the partitions of the other two premisses that is compatible with $(iii)$, we can prove  that   $(ii)$ is satisfied (by $\O C$) but we have not been able to prove that also $(i)$ is satisfied.
\end{example}

\section{Conclusion}\label{conc}
We presented cut- and contraction-free sequent calculi for non-normal modal and deontic logics. We have proved that these calculi have good structural properties in that weakening and contraction are height-preserving admissible and cut is (syntactically) admissible. Moreover, we have shown that these calculi allow for a terminating decision procedure whose complexity is in {\sc Pspace}. Finally, we have given a constructive proof of Craig's interpolation property for all the logics that do not contain  rule $LR$-$C$. As far as we know, it is still an open problem whether it is possible to give a constructive proof of interpolation for these logics. Another open question is whether the calculi given here can be used to give a constructive proof of  the uniform interpolation property for non-normal logics as it is done in \cite{P92} for $\mathbf{IL_p}$ and in \cite{B07} for  {\bf K} and {\bf T}. 

\vspace{0.3cm}

\noindent {\bf Thanks.} Thanks are due to Tiziano Dalmonte, Simone Martini,  and two anonymous referees for many helpful suggestions.
%  For a different route towards sequent calculi for non-normal modal logics, see \cite{GM14} where labelled sequent calculi for several non-normal logics are introduced. The approach in \cite{GM14}, which has the advantage of being completely modular, proceeds indirectly by introducing labelled sequent calculi not  for non-normal modal logics, but for  provably equivalent multi-modal normal logics,  see \cite{NP11} for labelled sequent calculi for normal modal logics.
%
%We know of no systematic treatment of  non-normal deontic logics by means of sequent calculi or natural deduction. \cite{G13} introduces hyper-sequent calculi for the interaction of modal logic with standard deontic logic and with a non-normal logic where obligation is weak permission -- where the interaction is based on the  Ought-implies-Can-principle, and proves that cut is eliminable from both calculi.

% Although  monadic deontic logics are a useful formal tool in many cases, there are other situations where dyadic ones seem preferable \cite{vB14,T97}.  In the future we plan to introduce  labelled sequent calculi for dyadic deontic logics following the approach used  in \cite{N13} for the logic of counterfactuals. 

%----Bibliography------
%

\noindent  {\sc Eugenio Orlandelli} \\
 Department of Philosophy and Communication Studies\\ University of Bologna\\
 Via Zamboni 38\\
 I-40126 Bologna, Italy\\
{\tt eugenio.orlandelli@unibo.it}

 \end{document}